\numberwithin{equation}{section}
\newtheorem*{theorem*}{Theorem}
\newtheorem{theorem}{Theorem}[section]
\newtheorem{lemma}[theorem]{Lemma}
\newtheorem{proposition}[theorem]{Proposition}
\newtheorem{corollary}[theorem]{Corollary}
\theoremstyle{definition}
\newtheorem{definition}[theorem]{Definition}
\newtheorem{assumption}{Assumption}
\theoremstyle{remark}
\newtheorem{remark}[theorem]{Remark}
\newtheorem{notation}[theorem]{Notation}
\definecolor{darkred}{rgb}{1,0,0} 
\definecolor{darkgreen}{rgb}{0,1,0}
\definecolor{darkblue}{rgb}{0,0,1}
\DeclareMathOperator{\Jac}{Jac}
\newcommand{\Nm}{\mathrm{Nm}}
\newcommand{\BBB}{\mathrm{(BBB)}}
\newcommand{\BAA}{\mathrm{(BAA)}}
\newcommand{\ABA}{\mathrm{(ABA)}}
\newcommand{\AAB}{\mathrm{(AAB)}}
\newcommand{\BBBB}{\mathbf{(BBB)}}
\newcommand{\BBAA}{\mathbf{(BAA)}}
\newcommand{\Id}{\mathrm{Id}}
\newcommand{\sing}{\mathrm{sing}}
\newcommand{\Sing}{\mathrm{Sing}}
\newcommand{\Hilb}{\mathrm{Hilb}}
\newcommand{\supp}{\mathrm{supp}}
\newcommand{\End}{\mathrm{End}}
\newcommand{\gr}{\mathrm{gr}}
\newcommand{\id}{\mathbf{1}}
\newcommand{\DR}{\mathrm{dR}}
\newcommand{\GL}{\mathrm{GL}}
\newcommand{\SL}{\mathrm{SL}}
\newcommand{\PSL}{\mathrm{PSL}}
\newcommand{\suchthat}{\;|\;}
\newcommand{\tr}{\mathrm{tr}}
\newcommand{\U}{\mathrm{U}}
\newcommand{\image}{\mathrm{Im}}
\newcommand{\red}{\mathrm{red}}
\newcommand{\sm}{\mathrm{\, sm}}
\newcommand{\free}{\mathrm{free}}
\newcommand{\fs}{\mathrm{\, it}}
\renewcommand{\ni}{\mathrm{\, ni}}
\newcommand{\Hec}{\mathrm{NR} \,}
\newcommand{\PMod}{\mathrm{PMod}}
\newcommand{\Ord}{\mathrm{Ord}}
\newcommand{\Dd}{\mathcal{D}}
\newcommand{\Ee}{\mathcal{E}}
\newcommand{\Ff}{\mathcal{F}}
\newcommand{\Ii}{\mathcal{I}}
\newcommand{\Jj}{\mathcal{J}}
\newcommand{\Ll}{\mathcal{L}}
\newcommand{\Oo}{\mathcal{O}}
\newcommand{\Pp}{\mathcal{P}}
\newcommand{\Uu}{\mathcal{U}}
\newcommand{\Vv}{\mathcal{V}}
\newcommand{\Ww}{\mathcal{W}}
\newcommand{\M}{\mathbf{M}}
\newcommand{\B}{\mathrm{B}}
\newcommand{\Uni}{\mathrm{Uni}}
\renewcommand{\L}{\mathrm{L}}
\renewcommand{\P}{\mathrm{P}}
\renewcommand{\i}{\mathrm{i}}
\newcommand{\ol}[1]{\overline{#1}}
\newcommand{\wt}[1]{\widetilde{#1}}
\newcommand{\Fff}{\mathscr{F}}
\newcommand{\Lll}{\mathscr{L}}
\newcommand{\Xxx}{\mathscr{X}}
\newcommand{\CC}{\mathbb{C}}
\newcommand{\ZZ}{\mathbb{Z}}
\newcommand{\HH}{\mathbb{H}}
\newcommand{\PP}{\mathbb{P}}
\newcommand{\hH}{\mathfrak{h}}
\newcommand{\tT}{\mathfrak{t}}
\renewcommand{\to}{\longrightarrow}
\newcommand{\plonge}{\hookrightarrow}
\newcommand\Quotient[2]{
\mathchoice
{
\text{\raise1ex\hbox{\thinspace $#1$}\Big{/} \lower1ex\hbox{$#2$} \thinspace}%
}
{
#1\,/\,#2
}
{
#1\,/\,#2
}
{
#1\,/\,#2
}
}
\newcommand\GIT[2]{
\mathchoice
{
\text{\raise1ex\hbox{\thinspace $#1$}\Big{/}\!\!\!\!\Big{/} \lower1ex\hbox{$#2$} \thinspace}%
}
{
#1\,/\,#2
}
{
#1\,/\,#2
}
{
#1\,/\,#2
a       }
}
\newcommand{\map}[5]{\begin{array}{ccc}   #1  & \stackrel{#5}{\longrightarrow} &  #2  \\  #3 & \longmapsto & #4  \end{array}}
\newcommand{\morph}[6]{\begin{array}{cccc} #6: & #1  & \stackrel{#5}{\longrightarrow} &  #2  \\ & #3 & \longmapsto & #4  \end{array}}
\title[Unramified covers and branes on the Hitchin system]{\bf Unramified covers and branes on the Hitchin system}
\author[E. Franco]{Emilio Franco}
\address{E. Franco,
\newline\indent Centro de An\'alise Matem\'atica, Geometria e Sistemas Din\^{a}micos, 
\newline\indent Instituto Superior T\'ecnico, Universidade de Lisboa, 
\newline\indent Av. Rovisco Pais s/n, 1049-001 Lisboa, Portugal}
\email{emilio.franco@tecnico.ulisboa.pt}
\author[P.~B.\ Gothen]{Peter B.\ Gothen}
\address{P.~B. Gothen,
\newline\indent Centro de Matemática da Universidade do Porto, 
\newline\indent Faculdade de Ci\^encias da Universidade do Porto, 
\newline\indent Rua do Campo Alegre s/n, 4169-007 Porto, Portugal}
\email{pbgothen@fc.up.pt}
\author[A. Oliveira]{Andr\'e Oliveira}
\address{A. Oliveira, 
\newline\indent Centro de Matem\'atica da Universidade do Porto,
\newline\indent Faculdade de Ci\^encias da Universidade do Porto, 
\newline\indent
Rua do Campo Alegre s/n, 4169-007 Porto, Portugal \newline\indent \textsl{On leave from:}\newline\indent Departamento de Matem\'atica, Universidade de Tr\'as-os-Montes e Alto Douro, UTAD,\newline\indent
Quinta dos Prados, 5000-911 Vila Real, Portugal}
\email{andre.oliveira@fc.up.pt\newline\indent agoliv@utad.pt}
\author{Ana Pe\'on-Nieto}
\address{Ana Pe\'on-Nieto	\newline\indent
	School of Mathematics \newline\indent University of Birmingham\newline\indent Watson Building, Edgebaston\newline\indent Birmingham B15 2TT, UK\newline \indent \textit{and}
 \newline\indent Laboratoire de Math\'ematiques J.A. Dieudonn\'e \newline\indent UMR  7351 CNRS \newline\indent Universit\'e de Nice Sophia-Antipolis \newline\indent 06108 Nice Cedex 02, France}
\email{ana.peon-nieto@unice.fr, 
	a.peon-nieto@bham.ac.uk}
\date{\today}
\thanks{
First, second and third authors partially supported by CMUP
(UIDB/00144/2020) and the project PTDC/MAT-GEO/2823/2014 funded by FCT
(Portugal) with national funds. 
First author supported as well by FCT (Portugal) in the framework of the Investigador FCT program, fellowship reference CEECIND/04153/2017. 
Third author also partially supported by the Post-Doctoral fellowship SFRH/BPD/100996/2014, also funded by FCT (Portugal) with
national funds.
Fourth author is currently supported by the scheme H2020-MSCA-IF-2019,  Agreement n.	897722 (GoH). She was formerly funded through a Beatriu de Pin\'os grant n. 2018 BP 332 (H2020-MSCA-COFUND-2017 Agreement n. 801370), a postdoctoral grant associated to the project FP7 - PEOPLE - 2013 - CIG - GEOMODULI number: 618471 and the 
Swiss National Science Foundation project SwissMAP. 
The authors
acknowledge support from U.S. National Science Foundation grants
DMS 1107452, 1107263, 1107367 ``RNMS: GEometric structures And
Representation varieties'' (the GEAR Network).
%
}
\subjclass[2010]{14J33; Secondary 14D21} 
\begin{document}

\begin{abstract}
We study the locus of the moduli space of $\mathrm{GL}(n,\mathbb{C})$-Higgs bundles on a curve given by those Higgs bundles obtained by pushforward under a connected unramified cover. We equip these loci
with a hyperholomorphic bundle so that they can be viewed as
BBB-branes, and we introduce corresponding BAA-branes which can be
described via Hecke modifications. We then show how these branes are
naturally dual via explicit Fourier--Mukai transform (recall that $\mathrm{GL}(n,\mathbb{C})$ is Langlands self dual). It is
noteworthy that these branes lie over the singular locus of the
Hitchin fibration. 

As a particular case, our construction describes the behavior under mirror symmetry of the fixed loci for the action of tensorization by a line bundle of order $n$. These loci play a key role in the work of Hausel and Thaddeus on topological mirror symmetry for Higgs moduli spaces.
\end{abstract}

\maketitle

\tableofcontents


\section{Introduction}

\subsection{In a nutshell}
Among the many fundamental contributions of Narasimhan and Ramanan to
the study of moduli of vector bundles on curves are the Hecke
correspondence \cite{narasimhan-ramanan:1969,narasimhan-ramanan:1975b} and the study of generalized Prym varieties as fixed
points \cite{NR}. In this paper we use these ideas to explore mirror symmetry
for the moduli space of Higgs bundles in the spirit of the seminal
work of A. Kapustin and E. Witten \cite{kapustin&witten}. More precisely, we exhibit pairs of dual
branes for the Langlands self dual group $\GL(n,\CC)$.
The interest of our construction relies on two aspects: firstly the branes we consider are sheaves (rather than just submanifolds) and the duality is realized via an explicit Fourier--Mukai
transform; secondly, we are making progress in the understanding of mirror symmetry in the
singular locus of the Hitchin system, since the branes lie entirely over this locus. As far as we know, this is the first example of dual branes lying over the singular locus, where mirror symmetry is explicitly realized by a Fourier--Mukai transform. Finally, it is also important to note that among the branes we
construct are the fixed loci under tensorization by an order $n$ line
bundle, central in the work of Hausel and Thaddeus \cite{HT} on
topological mirror symmetry, so our construction ought to be important in a deeper understanding of the topological mirror symmetry phenomenon. In the remainder of this section, we
explain our constructions and results in more detail.

\subsection{Context}
 N. Hitchin introduced in \cite{hitchin-self} Higgs
bundles over a smooth projective complex curve $X$ of genus $g\geq 2$ as solutions to certain equations obtained by dimensional reduction of the self-dual equations on a
$4$-manifold. 
These are pairs $(E,\varphi)$, where $E$ is a holomorphic vector
bundle over $X$ and $\varphi$ is a holomorphic one-form with values in
$\End(E)$.  The moduli space  $\M_X(n,d)$ of Higgs bundles of rank $n$
and degree $d$ is a holomorphic symplectic manifold carrying a
hyperk\"ahler metric. Moreover, it admits the structure of an
algebraically completely integrable system given by the Hitchin map
$h_{X,n}\colon\M_X(n,d)\to B_{X,n}$. Here the Hitchin base $B_{X,n}$
is an affine space whose dimension is half that of $\M_X(n,d)$, and
the components of $h_{X,n}$ are the coefficients of the characteristic
polynomial of $\varphi$. The fiber of $h_{X,n}$ over a generic point
of the Hitchin base is a torsor for an abelian variety, namely the
Jacobian of an associated spectral curve.

The concept of a $G$-Higgs bundle can be defined for any complex (and
even real) reductive Lie group $G$. In these terms, the above
definition becomes that of a $\GL(n,\CC)$-Higgs bundle.  The Hitchin
map can also be defined in this generality, and it has been shown that
it is an algebraically completely integrable system for any complex
reductive Lie group $G$ \cite{hitchin_duke, Faltings,Sco, DG}.

A new development arose with the discovery by T. Hausel and
M. Thaddeus \cite{HT} of a close relation between Higgs bundles,
mirror symmetry and the Langlands correspondence. They proved that the
moduli spaces of Higgs bundles for the group
$\SL(n,\CC)$ and its Langlands dual group $\PSL(n,\CC)$ form a pair of
SYZ-mirror partners \cite{strominger-yau-zaslow:1993}, in the sense
that the respective Hitchin maps have naturally isomorphic bases and
their fibers over corresponding points are, generically,
half-dimensional torsors for a pair of dual abelian varieties. This
was subsequently generalized by N. Hitchin \cite{hitchin_G2} for the
self-dual group $G_2$ and then by R. Donagi and T. Pantev \cite{DP}
for any pair $(G,{}^LG)$ of Langlands dual groups. The duality is
reflected by a Fourier--Mukai transform between the
moduli spaces interchanging fibers of the Hitchin map over corresponding
points in the base. These dualities were obtained over the locus of
the Hitchin base where the corresponding spectral curves are smooth.

As mentioned above, the moduli space $\M_X(n,d)$ is
hyperk\"ahler. This means that it carries three natural complex
structures $I_1$, $I_2$ and $I_3$ verifying the quaternionic relations
and a metric which is K\"ahler with respect to all three holomorphic
structures. In the present case, $I_1$ is the natural complex
structure on the moduli space of Higgs bundles $\M_X(n,d)$, while the
complex structures $I_2$ and $I_3=I_1I_2$ arise via the non-abelian
Hodge Theorem, which identifies $\M_X(n,d)$ with the moduli space of
projectively flat $\GL(n,\CC)$-connections (see \cite{hitchin-self,Si}).

A. Kapustin and E. Witten considered in \cite{kapustin&witten} certain
special subvarieties of $\M_X(n,d)$, equipped with special
sheaves. The pair composed by such a subvariety and the corresponding
sheaf is called a brane.  For each of the complex structures on
$\M_X(n,d)$ a brane is classified as follows: it is of type A if it is
a Lagrangian subvariety with respect to the corresponding K\"ahler
form and the sheaf over it is equipped with a flat connection, and it is of type B if it
is a holomorphic subvariety and the sheaf over it is also
holomorphic. Thus, for instance, a $\BBB$-brane is a subvariety
equipped with a sheaf, holomorphic with respect to all three
complex structures $I_1$, $I_2$ and $I_3$; in other words, it is a
hyperholomorphic subvariety equipped with a hyperholomorphic
sheaf (this is a sheaf with a connection whose curvature is of type $(1,1)$ with respect to all complex structures). 
A $\BAA$-brane is a subvariety which is holomorphic
with respect to $I_1$, and Lagrangian with respect to the K\"ahler forms $\omega_2$ and $\omega_3$ associated to $I_2$ and $I_3$ (hence complex Lagrangian for $\Omega_1 = \omega_2 + \i \, \omega_3$), and which in addition supports a flat vector bundle.
There are only two other possible types of branes on $\M_X(n,d)$, namely
$\ABA$- and $\AAB$-branes.  Again all this holds for any complex Lie
group and not just $\GL(n,\CC)$.

According to \cite{kapustin&witten}, mirror symmetry conjecturally
interchanges $\BBB$-branes and $\BAA$-branes, and mathematically this
duality should again be realised via a Fourier--Mukai transform (in
complex structure $I_1$). The support of the $\BAA$-brane should
depend not only on the support of the dual $\BBB$-brane but also on
the hyperholomophic sheaf over it (and vice-versa). A similar story
holds for pairs of $\ABA$-branes and also for pairs of $\AAB$-branes.

Since Kapustin and Witten's paper---and because of it---an intense study
of several kinds of branes on Higgs bundle moduli spaces has been
carried out. Some examples may be found in
\cite{hitchin-charclasses,baraglia-schaposnik-realstruct,BGP,HS,BCFG,hitchin-spinors,Ga,FJ,baraglia-schaposnik-CayleyLangl,Borel,B,HMDP}
(see also \cite{AFES} for a survey on this subject). Most of these
works mainly focus either on the smooth locus of the Hitchin system
(exceptions are \cite{baraglia-schaposnik-CayleyLangl,Borel,B}) or
only deal with the support of the branes and not with the sheaves on
it (exceptions are \cite{hitchin-charclasses,
hitchin-spinors,Ga,FJ,Borel}).

\subsection{Our construction}

Starting from a connected unramified cover $p:C \to X$ of degree $n$ and Galois group $\Gamma$, we introduce in this paper new types of $\BBB$-branes and $\BAA$-branes on $\M_X(n,d)$, the moduli space for the self-dual group
$\GL(n,\CC)$. As required in
the general picture, our $\BBB$-branes come equipped with
flat, hence hyperholomorphic, bundles. We 
explicitly prove (when $d=0$) that their (fiberwise) Fourier--Mukai transform generically yields a sheaf 
supported exactly over the support of our $\BAA$-brane. As expected, the support of the $\BAA$-brane
depends on the hyperholomorphic bundle over the $\BBB$-brane.

These branes are supported on a subspace $B^p\subset B_{X,n}$ of
the \emph{singular locus} of the Hitchin system. For a dense open subset $B^p_{\ni} \subset B^p$ of {\it nodal} and {\it integral} spectral curves, the normalization of these curves is $C$ itself. Since $p : C \to X$ is unramified, $B^p_{\ni}$ is, by definition, contained in the so-called \emph{endoscopic locus} of $h_{X,n}$ (cf. \cite{HP,Ngo}). So our construction (more precisely, its analogue for the Langlands dual groups $\SL(n,\CC)$ and $\PSL(n,\CC)$) may eventually be relevant in the context of geometric endoscopy, introduced by E. Frenkel and E. Witten in \cite{FW}.

In the following we outline our construction in more detail, starting
with the $\BBB$-branes. Fix the rank $n$ to coincide with the degree of $p$ and set $\M_X(n,d)^p$ to be the locus of Higgs bundles obtained as a pushforward under $p$ of Higgs bundles in $\M_C(1,d) \cong T^*\Jac^d({C})$. Let $B^p$ be the image of $\M_X(n,d)^p$
under the Hitchin map $h_{X,n}\colon\M_X(n,d)\to B_{X,n}$. As a direct consequence of non-abelian Hodge theory, one concludes that $\M_X(n,d)^p$ is a hyperholomorphic subvariety. The pushforward by $p$
yields an isomorphism between $\M_X(n,d)^p$ and the quotient of
$T^*\Jac^d({C})$ by the Galois group, acting by
pullback. From this, one defines a hyperholomorphic line bundle $\Lll$
over $\M_X(n,d)^p$, naturally associated to a flat line bundle
$\Ll$ on $X$. We call the pair $(\M_X(n,d)^p,\Lll)$ a \emph{rank $1$
Narasimhan--Ramanan $\BBB$-brane}. We represent it by $\BBBB_\Ll^p$ and write
$\BBBB_{\ni}^{p,\Ll}$ for its restriction to $B^p_{\ni}$. More generally,
we can construct a rank $n$ coherent and hyperholomorphic sheaf $\Fff$ on $\M_X(n,d)^p$ which
canonically associated to a flat line bundle $\Ff$ over ${C}$,
and we call the pair $(\M_X(n,d)^p,\Fff)$ a \emph{rank $n$
Narasimhan--Ramanan $\BBB$-brane} and represent it by
$\BBBB_{\Ff}^p$. 

Let $p: C \to X$ be a Galois $\ZZ_n$-cover, and let $\xi\in \ZZ_n$ be the standard generator. Parallel transport of the lifts from $X$ to $C$ provides a line bundle $L_\xi \in \Jac^0(X)$ of order $n$. In this case, it basically follows from \cite{NR} that the locus $\M_X(n,d)^p$ coincides with the subvariety $\M_X(n,d)^\xi \subset \M_X(n,d)$ of points $(E,\varphi)$ fixed by tensorization of by $L_\xi$, \textit{i.e.}\  $(E,\varphi)\cong(E\otimes L_\xi,\varphi)$. The study of $\M_X(n,d)^\xi$ was our original motivation. So this justifies the name chosen for the $\BBB$-branes appearing in this paper.



If our $\BBB$-branes are intimately related to the work of Narasimhan--Ramanan in \cite{NR}, the construction of our $\BAA$-branes is closely linked to their work on \emph{Hecke modifications} of vector bundles published in \cite{narasimhan-ramanan:1969,narasimhan-ramanan:1975b}. Hecke modifications in the context of Higgs
bundles have previously appeared in several papers; see, for example,
\cite{hitchin:2017,hwang-ramanan:2004,ramanan:2010,wilkin:2016,witten:2015}. Before describing the construction, we recall that under certain assumptions on the values of the rank and the degree, there exists a Hitchin section on the moduli space $\M_C(r,d+\delta)$ constructed out of a line bundle $\Jj\in\Jac^{d+\delta}({C})$. The pushforward under $p$ defines a Hitchin--type section of $\M_X(n, d+\delta)^p \to B^p$. We define the subvariety $\Hec^{p,\Jj}_{\ni} \subset \M_X(n,d)$ of those Higgs bundles over $B^p_{\ni}$ obtained as Hecke modifications of this Hitchin--type section at the divisor of singularities of the corresponding integral and nodal spectral curve (which has length $\delta$) classified by $B^p_{\ni}$. The notation we use for this subvarieties is chosen to recognize the pioneer work Narasimhan and Ramanan on Hecke modifications. We prove next that the subvarieties $\Hec^{p,\Jj}_{\ni}$ are complex Lagrangian with respect the holomorphic symplectic form $\Omega_{1}=\omega_{2}+\i \, \omega_{3}$ on $\M_X(n,d)$. This shows that this subvariety is the support of a $\BAA$-brane on $\M_X(n,d)$, when endowed with a flat bundle.

Our construction of $\Hec^{p,\Jj}_{\ni}$ (for $d=0$ and $p$ of degree $n$) was aimed at obtaining the support of a $\BAA$-brane dual to the rank $1$ $\BBB$-brane $\BBBB_\Ll^p$, for an appropriate choice of the line bundle $\Jj$. Towards this goal, we provide an extensive study of the spectral data of the Higgs bundles appearing in $\Hec^{p,\Jj}_{\ni}$ and in $\M_X(n,d)^p$, the support of $\BBBB_\Ll^p$. For a given $b\in B^p_{\ni}$, let $X_b$ be the corresponding spectral curve and $\nu_b:{C}\to X_b$ the normalization. Over the Hitchin fiber associated to $X_b$, the spectral data in $\Hec^{p,\Jj}_{\ni}$ are those contained in the closure of the preimage of $\Jj$ by the pull-back under $\nu_b$. On the other hand, the spectral data contained in $\M_X(n,d)^p$ are those given by pushforward under $\nu_b$. This paves the way for our main result, Theorem \ref{tm duality}, which is described below.

\begin{theorem*}
Let $p : C \to X$ be a connected unramified $n$-cover. Consider the moduli space $\M_X(n,0)$.
\begin{enumerate}[(i)]
\item Let $\Jj={p}^*(\Ll\otimes K_X^{(n-1)/2})$. The (fiberwisewise) dual of the rank $1$ Narasimhan-Ramanan $\BBB$-brane $\BBBB^p_\Ll$ (restricted to the locus of nodal and irreducible spectral curves) is the $\BAA$-brane supported on $\Hec^{p,\Jj}_{\ni}$, and whose flat bundle satisfies \eqref{eq:relation FM transf 1}.
\item Let $\Jj=\Ff\otimes {p}^*K_X^{(n-1)/2}$. The (fiberwise) dual of the rank $n$ Narasimhan-Ramanan $\BBB$-brane $\BBBB^p_{\Ff}$ (restricted to the locus of nodal and irreducible spectral curves) is the $\BAA$-brane supported on $\Hec^{p,\Jj}_{\ni}$, and whose flat bundle satisfies \eqref{eq:relation FM transf n} .
\end{enumerate}
\end{theorem*}


It is important to note that this duality is proved by an explicit fiberwise Fourier--Mukai transform, on the fibers over $B^p_{\ni}$, mapping the hyperholomorphic sheaf to a sheaf supported on $\Hec^{p,\Jj}_{\ni}$. This Fourier--Mukai transform is carried out using the autoduality of compactified Jacobians of integral curves with
planar singularities, from the general results of D. Arinkin
\cite{arinkin}. It uses a Hitchin section (which embeds $B^p_{\ni}$ as a subvariety of $\Hec^{p,\Jj}_{\ni}$) to identify $\overline{\Jac}^{\, \delta}(X_b)$ with the corresponding
$\overline{\Jac}^{\, 0}(X_b)$, and then apply Arinkin's Fourier--Mukai functor. In order to explicitly do it, 
we relate this functor with the classical Fourier--Mukai functor of $\Jac^{\, \delta}({C})$, via the pullback and the pushforward maps induced by the normalization morphism $\nu_b:{C}\to X_b$.

 

It is worth noticing in this case that $\BBBB^p_{\Ff}$ appears as the pushforward of $\BBB$-brane $(\Fff, \nabla_\Fff) \to \M_C(1,0)$ supported over the whole moduli space, where $\Fff$ is the pullback under $\M_C(1, 0) \to \Jac^0(X)$ of the flat line bundle over $\Jac^0(X)$ associated to $\Ff \to X$. Mirror symmetry conjectures that $(\Fff, \nabla_\Fff) \to \M_C(1,0)$ is dual to the $\BAA$-brane given by the Hitchin section associated to $\Ff$. As we said before, $\BBAA^{p,\Ff}_{\ni}$ can be interpreted in terms of Hecke modifications of the pushforward of this Hitchin section. This suggests a deep relation between duality of branes in $\M_X(n,0)$, duality in $\M_C(1,0)$ and the Hecke operators appearing in geometric Langlands conjecture (see \cite{DP}). 	For $d$ non-multiple of $n$ a similar result should hold, but the duality
should require a gerbe to work out properly.  We also note that the
results in this paper provide evidence for the dualities suggested in
\cite{Borel}.

\begin{remark}
We actually construct the support of the $\BBB$-brane (and describe its spectral data) in a wider generality, namely in the case where the unramified cover $p:C\to X$ is of degree $m$ not necessarily equal to the rank $n$. In such a case, one must consider polystable Higgs bundles over $C$ of rank $r$, such that $n=mr$. It is however unclear how to endow such $\BBB$-branes with hyperholomorphic bundles.  

Similarly, we construct $\Hec^{p,\Jj}_{\ni}$ in the more general setup of a degree $m$ cover $p:C\to X$. In the absence of a Hitchin section on $\M_C(r,d)$ we make use of very stable bundles on $C$, which define natural complex Lagrangian multisections of the Hitchin fibration. We explore this in Section \ref{sec:non-max-ord}.
\end{remark}

As mentioned above, when the Galois group is cyclic, the support of our $\BBB$-branes is $\M_X(n,d)^\xi$. It is interesting to notice that $\M_X(n,d)^\xi$ plays a central role in the proof by T. Hausel and M. Thaddeus \cite{HT} of
topological mirror symmetry for the moduli spaces of Higgs bundles for
the Langlands dual groups $\SL(n,\CC)$ and $\PSL(n,\CC)$ for $n=2,3$
(the general case has recently been proved by M. Groechenig, D. Wyss and
P. Ziegler \cite{groechenig-wiss-ziegler:2017}, and, more recently, by D. Maulik and J. Shen  \cite{Maulik-Shen}). One might thus
hope that further study of our dual branes in this setting would provide a
better geometric understanding of the calculation by Hausel and
Thaddeus. We hope to come back to this question in a future article.

\subsection{Organization of the paper}
Here is a brief description of the organization of the paper. In
Section~\ref{section:prelim} we recall some background material on the
Hitchin system. In Section \ref{sc Fixed point subvarieties} we study the locus $\M_X(n,d)^p$, including the
corresponding spectral data, for $p$ an unramified cover of degree $m$, with $m$ dividing $n$. Section~\ref{sc BBB branes} deals with the construction and
description of the Narasimhan--Ramanan $\BBB$-branes. In Section~\ref{section:Heckebranes} we construct the complex Lagrangian subvarieties $\Hec^{p,\Jj}_{\ni}$, which support $\BAA$-branes.
In Section~\ref{sec:FM}, after recalling some background
facts on the Fourier--Mukai transform for compactified Jacobians of integral curves and describing in Section \ref{subsection:FM} the role of the normalization of the curve in the transform, we prove our main duality result, namely Theorem \ref{tm duality}.
Finally, in Section~\ref{sec:non-max-ord}, we generalize parts of the previous study to the
case where $p: C \to X$ has degree strictly less than $n$ and no Hitchin section exists on $\M_C(r,d)$.

\subsubsection*{Acknowledgments}

The authors thank D. Arinkin, B. Collier, O. Garcia-Prada, T. Hausel,
N. Hitchin, C. Pauly and R. Wentworth for their interest and useful discussions, and also thank the referee for helpful remarks and corrections.

\section{Higgs bundles and the Hitchin system}\label{section:prelim}

The purpose of this section is to recall the basics on Higgs bundle moduli spaces which will be used in the remaining part of the paper.

\subsection{Higgs bundles and their moduli space}
\label{sc Mm hyperkahler}

Let $X$ be a smooth projective curve over $\CC$, of genus $g\geq 2$. A
\emph{Higgs bundle} over $X$ is a pair $(E, \varphi)$ given by a holomorphic
vector bundle $E \to X$, and $
\varphi \in H^0(X, \End(E) \otimes K_X), 
$ where $K_X$ is the canonical bundle. 
The section $\varphi$ is called the \emph{Higgs field}.  The
\emph{rank} and \emph{degree} of a Higgs bundle are those of the
underlying vector bundle $E$. Such a rank $n$ Higgs bundle is also said to be a \emph{$\GL(n,\CC)$-Higgs bundle}.
Occasionally, we shall refer to \emph{$\SL(n,\CC)$-Higgs bundles}, in which $E$ is required to have a fixed given determinant bundle and $\varphi$ to be traceless.

Let $\M_X(n,d)$ denote the moduli space os $S$-equivalence classes of semistable rank $n$ and degree $d$ Higgs bundles on $X$. Its points are represented by the unique polystable representative of the corresponding $S$-equivalence class.
It is a quasi-projective variety of complex dimension
\begin{equation}\label{eq:dim}
\dim \M_X(n,d) = 2n^2(g-1) + 2.
\end{equation}
The closely related \emph{de Rham moduli space} $\M^\DR_X(n,d)$ is
the moduli space of connections  with constant central
curvature on a fixed $C^\infty$ vector bundle
over $X$ of rank $n$ and degree $d$.
Non-abelian Hodge theory \cite{hitchin-self, simpson2, simpson3,
donaldson, corlette} establishes the existence of a homeomorphism
between
these spaces, $\M_{X}(n,d)\cong\M^\DR_X(n,d)$.
This homeomorphism restricts to a diffeomorphism on the smooth locus of
$\M_{X}(n,d)$, whose underlying manifold is a hyperk\"ahler manifold \cite{hitchin-self}
with complex structures
\[
I_1, \quad I_2 \quad \text{and} \quad I_3 =I_1 I_2.
\]
Here $I_1$ is the complex structure coming from $\M_{X}(n,d)$ and
$I_2$ is the one coming from $\M^\DR_{X}(n,d)$. Let also $\omega_j$ be the K\"ahler form associated to $I_j$ and
$\Omega_{X,j} = \omega_{j+1} + \i \omega_{j-1}$ the corresponding
holomorphic symplectic form.

\subsection{The Hitchin system}\label{sec:the Hitchin system}

We recall here the spectral construction given in \cite{hitchin_duke, BNR}. Let $(P_1,\ldots,P_n)$ be a basis of $\GL(n,\CC)$-invariant polynomials with $\deg(P_i)=i$; for instance, we could take $P_i(x)=(-1)^i\tr(\wedge^i x)$. The \emph{Hitchin map} or \emph{Hitchin fibration} is 
\begin{equation}\label{eq:Hitchinfibration}
\morph{\M_X(n,d)}{B_{X,n} := \bigoplus_{i=1}^n H^0(X,K_X^i)}{(E,\varphi)}
{\left (P_1(\varphi),\dots, P_n(\varphi) \right ).}{}{h_{X,n}}
\end{equation}
Note that $\dim(B_{X,n})=n^2(g-1)+1=\frac{1}{2}\dim(\M_X(n,d))$.

Consider the total space $|K_X|$ of the canonical bundle and the
surjective morphism $\pi: |K_X| \to X$. The pullback bundle
$\pi^*K_X \to |K_X|$ has a tautological section
$\lambda$. Given an element $b \in B_{X,n}$, with
$b = (b_1, \dots, b_n)$, the \emph{spectral curve}
$X_b \subset |K_X|$ is the vanishing locus of the section of
$\pi^*K_X^n$ given by
\begin{equation} \label{eq construction of spectral curve}
\lambda^n+\pi^*b_1\lambda^{n-1}+\dots+\pi^*b_{n-1}\lambda + \pi^*b_n\in H^0(|K_X|,\pi^*K_X^n).
\end{equation}
The restriction of $\pi\colon |K_X| \to X$ to $X_b$ yields a ramified degree $n$ cover denoted by
\begin{equation} \label{eq pi_b}
\pi_b\colon X_b \longrightarrow X.
\end{equation}
For generic $b$, the spectral curve $X_b$ is smooth.
For any $b$, the (arithmetic) genus of $X_b$ is \cite{hitchin_duke}
\[
g(X_b) = n^2(g-1)+1.
\]
Additionally, $\pi_{b,\ast}\Oo_{X_b}\cong\Oo_X\oplus K_X^{-1}\oplus\cdots\oplus K_X^{1-n}$, thus
\[
\deg(\pi_{b,\ast}\Oo_{X_b}) = - n(n - 1)(g-1).
\]

\begin{notation}
For the remainder of the paper, let us denote the degree of the ramification divisor of the spectral curve $X_b \to X$ in $B_{X,n}$, by 
\begin{equation}\label{eq:delta}
\delta:=n(n-1)(g-1).
\end{equation}
\end{notation}

Given a rank $1$ torsion-free sheaf $\Ff$ over $X_b$ of degree
$d+\delta$, we have that 
\begin{equation} \label{eq spectral correspondence for Ff}
E_{\Ff} := \pi_*\Ff
\end{equation}
is a vector bundle on $X$
of rank $n$ and degree $d$. Tensorization by the tautological section yields 
\[\mu_{\Ff} : \Ff \xrightarrow{\otimes\lambda} \Ff \otimes \pi_b^* K_X.\] 
Since $\pi$ is an affine morphism, $\mu_{\Ff}$ corresponds to a Higgs field 
\begin{equation} \label{eq spectral correspondence for varphi}
\varphi_{\Ff} := \pi_{b,*}\mu_{\Ff}:E_\Ff\to E_\Ff\otimes K_X
\end{equation}
on $E_{\Ff}$ with characteristic polynomial determined by $b \in B_{X,n}$
\cite{BNR,Schaub,simpson3}. The pair $(X_b,\Ff)$ is said to be the
\emph{spectral datum} of the Higgs bundle $(E_{\Ff},\varphi_{\Ff})$. This
establishes a one-to-one correspondence, sometimes called \emph{spectral correspondence}, between the \emph{Hitchin
fiber} $h_{X,n}^{-1}(b)$ and the moduli space of
rank $1$ torsion-free sheaves on $X_b$ of degree $d+\delta$
and linearization naturally induced from the base \cite[Corollary 6.9]{simpson3},
and so
\begin{equation} \label{eq description of Hitchin fiber}
h^{-1}_{X,n}(b) \cong \overline{\Jac}^{\, d+\delta}(X_b).
\end{equation}
The construction of $\overline{\Jac}^{\, d+\delta}(X_b)$ follows from \cite[Theorem 1.21]{simpson2} and it is a compactification of the Jacobian $\Jac^{\, d+\delta}(X_b)$ of degree $d+\delta$ line bundles on $X_b$, hence we refer to it as the {\it compactified Jacobian}. Denote by $\Xxx \to B_{X,n}$ the family of $n$-to-$1$ spectral curves inside $|K_X|$ and endow it with a linearization induced from a linearization on $X$. Thanks again to \cite[Theorem 1.21]{simpson2} one can consider the relative compactified Jacobian $\overline{\Jac}^{\, d+\delta}_{B_{X,n}}(\Xxx)$. The spectral correspondence promotes to the whole moduli space \cite[Section 6]{simpson3}, giving rise to the isomorphism  
\begin{equation} \label{eq spectral isomorphism}
\morph{\overline{\Jac}^{\, d+\delta}_{B_{X,n}}(\Xxx)}{\M_X(n,d)}{\Ff \to X_b}{(E_{\Ff}, \varphi_{\Ff}).}{}{S_{X,n}}
\end{equation}

When the degree is a multiple of the rank, $d = nd'$, the Hitchin fibration $h_{X,n}:\M_X(n,n d')\to B_{X,n}$ admits a so-called \emph{Hitchin section} associated to any line bundle $\Ll \in \Jac^{\, d'+\delta/n}(X)$. This section is constructed by assigning to each $b\in B$ the Higgs bundle whose spectral datum is given by the line bundle $\pi^*_b \Ll \to X_b$. In other words, we have a morphism
\begin{equation}\label{eq s_F}
\morph{B_{X,n}}{\M_{X}(n,nd')}{b}{(\pi_{b,\ast}\pi_b^{\ast}\Ll,\pi_{b,\ast}\mu_{\pi_b^*\Ll}).}{}{\sigma_{X, \Ll}}
\end{equation}
Hitchin \cite{hitchin_lie} considered such sections for $\Ll = K_X^{(n-1)/2}$. 
In this case we omit the reference to the line bundle in our notation and we simply denote the corresponding Hitchin section by $\sigma_X$.

%
%
%

\section{Unramified covers and Higgs bundles}
\label{sc Fixed point subvarieties}

\subsection{Unramified covers and hyperholomorphic subvarieties in the moduli space}

Let $p : C \to X$ be a connected unramified cover of degree $m$ and Galois group $\Gamma$. In this section, we study the subvarieties that arise in the moduli space of Higgs bundles out of this geometrical setting. Some of the following results have been already obtained in \cite{HP}.

Let $K_C$ be the canonical bundle of $C$, and let \[\eta: |K_C|\to C\] be the corresponding projection. As $p$ is unramified
\begin{equation} \label{eq p unramified}
K_C\cong {p}^*K_X
\end{equation}
and
\[
|K_C| \cong |K_X| \times_X C,
\]
hence we have a Cartesian diagram
\begin{equation}\label{eq cartesian diagram total spaces}
\xymatrix{|K_C|\ar[r]^{\eta}\ar[d]_{q}&{C}\ar[d]^{{p}}\\
|K_X|\ar[r]_{\pi}&X,}
\end{equation}
$q$ being the obvious projection. In particular, $q$ is an unramified $\Gamma$-cover and $\eta\colon |K_C|\to {C}$ is $\Gamma$-equivariant. Note that the automorphism $\gamma : {C} \to {C}$, associated to any element of the Galois group, gives rise to an automorphism $\gamma : |K_C| \to |K_C|$ that we still denote by $\gamma$ by abuse of notation.

By \eqref{eq p unramified}, the pullback under ${p}\colon {C}\to X$ of a Higgs bundle is again a Higgs bundle. Moreover, polystability is preserved (e.g. because it sends solutions to the Hitchin
equations on $X$ to solutions to the Hitchin equations on ${C}$,
cf.~\cite{hitchin-self}). So we have a morphism
\begin{equation} \label{eqpullback}
\morph{\M_X(n,d)}{\M_{{C}}(n,md)}{(E,\varphi)}{({p}^*E,{p}^*\varphi)}{}{\hat{p}}
\end{equation} 
between the moduli spaces. 

\begin{remark} \label{rm image of hat p}
The image of $\hat{p}$ lies in $\M_{C}(n,md)^\Gamma$, the fixed point locus under the induced Galois group action on $\M_{{C}}(n,md)$ by pullback.
\end{remark}

By the projection formula and \eqref{eq p unramified}, if $(F,\phi)$ is a Higgs bundle over ${C}$ of rank $r$, then $(p_{*}F,p_{*}\phi)$ is a rank $n = mr$ Higgs bundle over $X$. Since $p$ is unramified and $X$ and $C$ are proper, $p$ is finite, so by \cite[Lemma 2.1 (ii)]{NR} we have
\begin{equation} \label{eq M^gamma in M_rm}
{p}^*p_{*}(F,\phi) = \bigoplus_{\gamma \in \Gamma} \gamma^* (F, \phi).
\end{equation}

Consider the moduli space $\M_{{C}}(r,d)$ of rank $r$ and
degree $d$ Higgs bundles over ${C}$. 

\begin{proposition}
\label{prop:push-fixed}
Let $p:C \to X$ be a connected unramified $m$-cover with Galois group $\Gamma$ and let $n=mr$. The pushforward under $p$, 
\begin{equation} \label{eq definition of check p_gamma}
\morph{\M_{{C}}(r,d)}{\M_{X}(n,d)}{(F,\phi)}{(p_{*}F,p_{*}\phi),}{}{\check{p}}
\end{equation}
is a hyperholomorphic finite morphism. Moreover, two rank $r$ Higgs
bundles over ${C}$ have the same image if and only if they are in
the same orbit under the $\Gamma$-action by pullback, so
\[
\M_X(n,d)^p := \image (\check{p}) \cong \M_C(r,d) / \Gamma.
\]
\end{proposition}

\begin{proof}
It follows from \cite[Proposition 3.1]{NR} that $\check{p}$ has image contained in the semistable locus and so it is well defined.
Moreover, it is hyperholomorphic because it corresponds to pushforward of projectively flat bundles under the Non-abelian Hodge Theorem.  

Since $p$ is unramified, it is obvious that two rank $r$ Higgs bundles over ${C}$ in the same orbit under the $\Gamma$-action by pullback will give the same image under $\check{p}$. Thanks to \eqref{eq M^gamma in M_rm}, we see that they have the same image only if they lie in the same $\Gamma$-orbit, so $\M_X(n,d)^p \cong \M_C(r,d)/\Gamma$. Note that $\M_C(r,d) / \Gamma$ is naturally a geometric quotient since $\M_C(r,d)$ is quasi-projective, hence $\M_C(r,d) \to \M_C(r,d) / \Gamma$ is finite. Thus $\check{p}$ is a finite morphism as it commutes with the composition of the isomorphism $\M_X(n,d)^p \cong \M_C(r,d)/\Gamma$ with the finite quotient, which is a finite morphism.    
\end{proof}


\subsection{The Hitchin map and unramified covers}
\label{sec:pull-hitchin}

Fix a connected unramified cover $p:C \to X$ of degree $m$, with Galois group $\Gamma$. In this section we study the restriction of the Hitchin map to $\M_X(n,d)^p$, with $n$ a multiple of $m$. Let  $B^p:= h_{X,n}(\M_X(n,d)^p)\subset B_{X,n}$ to be the image under the Hitchin map of the image of $\check{p}$.

\begin{notation}\label{not spectral curve} Let $r=n/m$. We shall employ the same notation for the Hitchin system in $\M_C(r,d)$ as the one used in Section \ref{sec:the Hitchin system}. So let 
\[h_{C,r}:\M_C(r,d)\to B_{C,r}=\bigoplus_{i=1}^rH^0(C,K_C^i)\]
 be the Hitchin map. For any given $a=(a_1,\ldots, a_r) \in B_{{C},r}$, denote by ${C}_{a}$ the corresponding spectral curve in $|K_C|$, with projection map \begin{equation}\label{eq:spectral_C_a}
 \eta_a=\eta|_{C_a}:C_a\to C,
 \end{equation} 
 where $\eta$ is defined in \eqref{eq cartesian diagram total spaces}.
 The curve $C_a$ is defined by the equation 
\begin{equation}\label{eq:spectral C}
\hat\lambda^{r} + \eta^* a_1 \hat\lambda^{r-1} + \dots + \eta^* a_{r}=0,
\end{equation}
with $\hat\lambda$ the tautological section of $\eta^*K_C$.
Set $\gamma(a):=\gamma^{*} a$ for every element $\gamma \in \Gamma$, and write $\gamma(a) = (\gamma(a)_1, \dots, \gamma(a)_{r})$ where $\gamma(a)_i \in H^0({C}, K_C^i)$.  
\end{notation}

The next proposition establishes the behavior of the Hitchin map under pullback by the Galois group. 

\begin{proposition} \label{pr Hitchin map is Galois equivariant}
The Hitchin map $h_{{C},r}:\M_C(r,d)\to B_{C,r}$ is equivariant for the action of the Galois group $\Gamma$ of $p:{C} \to X$ by pullback. Furthermore, for any $a \in B_{C,r}$, one has the Cartesian diagram 
\begin{equation}\label{eq cart diagram Xolb}
\xymatrix{{C}_{\gamma(a)}\ar[r]^{\gamma} \ar[d]_{\eta_{\gamma(a)}}&{C}_{a}\ar[d]^{\eta_{a}}
\\
{C}\ar[r]_{\gamma} &{C}.
}
\end{equation}
In particular, for any $\gamma\in \Gamma$, the spectral curves ${C}_{\gamma(a)}$ and ${C}_{a}$ are isomorphic.
\end{proposition}

\begin{proof}
Let $(F,\phi)$ be a Higgs bundle in $\M_{{C}}(r,d)$ such that $h_{{C}, r}(F,\phi) = a \in B_{{C}, r}$. Let $P_i$ be an invariant polynomial of degree $i$, and observe that $P_i(\gamma^*\phi) = \gamma^*P_i(\phi)$. It then follows that $h_{{C}, r}(\gamma^* (F,\phi)) = \gamma^* h_{{C}, r}(F,\phi)$ and the first part follows.

Since the spectral curve ${C}_{ a} \subset |K_C|$ is given by the vanishing of \eqref{eq:spectral C}, then ${C}_{\gamma(a)}$ is given by the vanishing of the pullback of \eqref{eq:spectral C} under $\gamma$. Note that  $(\gamma(a))_i=\gamma^*a_i$ and that the embedding $C_a\plonge |K_C|$ is $\Gamma$-equivariant (so that $\eta^*\gamma^*=\gamma^*\eta^*$). Therefore, given $y \in {C}_{ \gamma(a)}$, by definition of pullback, one has that
\[
\left( \hat\lambda^{r} + \eta^* (\gamma^*a)_1 \hat\lambda^{r-1} + \dots + \eta^* (\gamma^*a)_{r} \right)(y) = 0
\]
is equivalent to 
\[
\left( \hat\lambda^{r} + (\eta^* a_1) \hat\lambda^{r-1} + \dots + \eta^* a_{r} \right)(\gamma(y)) = 0
\]
 because $\hat\lambda$ is $\Gamma$-invariant, since $\hat\lambda=q^*\lambda$, where $q:|K_C|\to |K_X|$ and $\lambda$ is the tautological section of $\pi^* K_X$.
Thus $\gamma(y) \in C_a$ and the commutativity of \eqref{eq cart diagram Xolb} holds. The rest of the proposition follows from this.\end{proof}

Consider the moduli space of rank $n$ and degree $d$ Higgs bundles on ${C}$ and its associated Hitchin map
\[
h_{{C},n} :  \M_{{C}}(n,d) \to B_{{C}, n} = \bigoplus^{n}_{i=1} H^0({C}, K_C^i). 
\]
By \eqref{eq p unramified}, it follows that $p$ induces
\begin{equation}
\label{eq p_gamma for B}
{p}^*\colon B_{X,n} \to B_{{C},n}.
\end{equation}

\begin{lemma}\label{pullbackpgamma:inj&imag}
The induced map ${p}^* \colon B_{X,n}\to B_{{C},n}$ is injective and the following diagram commutes, where $\hat{p}$ is defined in \eqref{eqpullback}:
\[\xymatrix{\M_{X}(n,d)\ar[r]^{\hat{p}}\ar[d]_{h_{X,n}}&\M_{{C}}(n,md)\ar[d]^{h_{{C},n}}\\
B_{X,n}\ar@{^{(}->}[r]^{{p}^*}&B_{{C},n}.}\]
\end{lemma}
\begin{proof} 
Since ${p}$ is a local
isomorphism, ${p}^*:H^0(X,K_X^i)\to H^0({C},K_C^i)$ is
injective for every $i$, so
${p}^* \colon B_{X,n}\to B_{{C},n}$ is injective as well. The commutativity of the diagram is immediate from functoriality of pullback.
\end{proof}

\begin{proposition}\label{pr cartesian diagram with X_wtb} 
Let $b\in B_{X,n}$ and $\tilde{b}={p}^*b\in B_{{C},n}$. 
Let $X_b\subset |K_X|$ and ${C}_{ \tilde{b}}\subset |K_C|$ be the corresponding spectral curves. Then
\[
{C}_{ \tilde{b}} \cong X_b \times_X {C}
\]
and there is a Cartesian diagram
\begin{equation}\label{eq cartesian diagram spectral}
\xymatrix{{C}_{\tilde{b}}\ar[r]^{{\eta}_{\tilde{b}}}\ar[d]_{q_{\tilde{b}}}&{C}\ar[d]^{{p}}\\
X_b\ar[r]_{\pi_b}&X,
}
\end{equation}
where $q_{\tilde{b}}$, ${\eta}_{\tilde{b}}$ and $\pi_b$ are the restrictions
of the maps from \eqref{eq cartesian diagram total spaces}.  
In
particular, 
\begin{enumerate}[(i)]
\item \label{it equiv-reducibility} $X_b$ is reduced if and only if ${C}_{\tilde{b}}$ is reduced,
\item \label{it q is unramified} $q_{\tilde{b}}\colon C_{\tilde{b}} \to X_b$ is a connected unramified $\Gamma$-cover, and 
\item $\eta_{\tilde{b}}$ is a $\Gamma$-equivariant ramified degree $n$
cover, whose ramification locus is the pullback of that of $\pi_b$.
\end{enumerate}
\end{proposition}

\begin{proof} 
View the curve $X_b$ as a divisor in $|K_X|$.  First we prove
that the pullback by $q:|K_C|\to |K_X|$ of this divisor is
${C}_{\tilde{b}}$.
Write $b=(P_1(\varphi),\ldots,P_n(\varphi))$ for some Higgs bundle $(E,\varphi)$ in the Hitchin fiber of $b$. Then $X_b\subset |K_X|$ is defined by
\[
\lambda^n+\pi^*P_1(\varphi)\lambda^{n-1}+\dots+\pi^*P_n(\varphi)=0
\]
where we recall that $\lambda\in H^0(|K_X|,\pi^*K_X)$ is the tautological section.
Thanks to Lemma \ref{pullbackpgamma:inj&imag}, ${C}_{\tilde{b}}$ is defined by
\begin{equation}\label{eq:X_tilde-b-equation}
\hat\lambda^n+\eta^*{p}^*P_1(\varphi)\hat\lambda^{n-1}+\dots+\eta^*{p}^*P_n(\varphi)=0,
\end{equation}
where $\hat\lambda\in H^0(|K_C|,\eta^*K_C)$ is the tautological section. 
Clearly $\hat\lambda=q^*\lambda$ and so, in view of 
\eqref{eq cartesian diagram total spaces}, the equation defining ${C}_{\tilde{b}}$ is 
\[
q^*\lambda^n+q^*\pi^*P_1(\varphi)q^*\lambda^{n-1}+\dots+q^*\pi^*P_n(\varphi)=0.
\]
This shows that ${C}_{\tilde{b}}=q^*X_b$ as desired. 

Viewing $X_b\times_X {C}$ inside $|K_X|\times_X
{C}\cong|K_C|$ one readily sees that it satisfies
(\ref{eq:X_tilde-b-equation}) and, therefore, by the universal
property of the fiber product, it is isomorphic to $q^*X_b$. The rest of the lemma follows from this observation.
\end{proof}

We now study the relation of $\hat{p}\circ\check{p}:\M_C(r,d)\to\M_C(n,md)$ with the corresponding Hitchin maps (recall that $n=mr$).

\begin{proposition}\label{prop:equationspectralcurve}
Let $(F,\phi)$ be a Higgs bundle of rank $r$ over ${C}$ and consider $\hat{p}\circ\check{p}(F,\phi)$. Let $a \in B_{{C},r}$ and $\tilde{b} \in B_{{C},n}$ be the image under the Hitchin map of $(F,\phi)$ and $\hat{p}\circ\check{p}(F,\phi)$ respectively. Then, the spectral curve ${C}_{ \tilde{b}}\subset|K_C|$ is given by the vanishing of the section
\[
\prod_{\gamma \in \Gamma} \left( \hat\lambda^{r} +\eta^* \gamma(a)_1 \hat\lambda^{r-1} + \dots + \eta^* \gamma(a)_{r} \right) \in H^0(|K_C|,\eta^*K_C^n).
\] 
In particular ${C}_{ \tilde{b}}$ is reduced if and only if ${C}_{ a}$ is reduced and $\gamma(a)\neq \gamma'(a)$ for all distinct $\gamma,\gamma' \in \Gamma$ ({\it i.e.}, if $a \in B_{{C}, r}$ is not fixed by any non-trivial element of the Galois group). In that case ${C}_{ \tilde{b}}$ is reducible and 
\begin{equation} \label{eq decomposition of X_wtb}
{C}_{ \tilde{b}} = \bigcup_{\gamma \in \Gamma} {C}_{ \gamma(a)}.
\end{equation}
\end{proposition}

\begin{proof}
From \eqref{eq M^gamma in M_rm}, one has that ${C}_{ \tilde{b}}$ is given by the vanishing of 
\[\hat\lambda^n+\eta^*P_1\Big (\bigoplus_{\gamma \in \Gamma}\gamma^{*} \phi \Big)\hat\lambda^{n-1}+\dots+\eta^*P_n\Big (\bigoplus_{\gamma \in \Gamma}\gamma^{*} \phi \Big )\]
{\it i.e.}, of
\[\prod_{\gamma \in \Gamma} \left( \hat\lambda^{r} + \eta^* P_1(\gamma^{*} \phi) \hat\lambda^{r-1} + \dots + \eta^* P_{r}(\gamma^{*} \phi) \right).
\]
The rest of the proposition follows immediately.
\end{proof}

We can now describe the subspace $B^p=h_{X,n}(\M_X(n,d)^p)\subset B_{X,n}$.

\begin{proposition}
\label{prop:commutative-rank-1-hitchin}
There exists a map, 
\[
\zeta : B_{{C},r} \to B^p,
\]
making the $\Gamma$-equivariant diagram 
\begin{equation}\label{eq hat pgamma}
\xymatrix{
\M_{{C}}(r,d)
\ar[rr]^{\check{p}}\ar[d]_{h_{{C},r}} & & \M_{X}(n,d)^p \ar[d]^{h_{X,n}}\\
B_{{C},r} \ar[rr]_{\zeta} & & B^p}
\end{equation}
commutative, with $\Gamma$ acting by pullback on $\M_C(r,d)$ and $B_{C,r}$, and trivially on $\M_X(n,d)^p$ and $B^p$. The map $\zeta$ induces an isomorphism
\begin{equation}\label{eq B^gamma cong B_Xgamma / ZZ_m}
B^p \cong B_{{C},r}/\Gamma.
\end{equation}
Hence,
\begin{equation} \label{eq dim B^gamma}
\dim(B^p)=rn(g-1)+1.
\end{equation}
\end{proposition}

\begin{proof} 
By Proposition \ref{pr Hitchin map is Galois equivariant}, $\hat{p} \circ \check{p}$ induces the morphism $B_{{C}, r} \to B_{{C}, n}$ defined by 
\[(P_1(\phi),\ldots, P_r(\phi))\mapsto\bigg(P_1\Big (\bigoplus_{\gamma \in \Gamma}\gamma^{*} \phi \Big),\ldots,P_n\Big (\bigoplus_{\gamma \in \Gamma}\gamma^{*} \phi \Big)\bigg).\] The image of this morphism is clearly contained in $p^*(B^p)$, and by Lemma \ref{pullbackpgamma:inj&imag}, ${p}^*$ is injective, so the previous map defines the map $\zeta : B_{{C},r} \to B^p$. Explicitly, 
\[\zeta(P_1(\phi),\ldots, P_r(\phi))=(P_1(p_\ast\phi),\ldots,P_1(p_\ast\phi)).\] The commutativity of \eqref{eq hat pgamma} is immediate from construction, and so is \eqref{eq B^gamma cong B_Xgamma / ZZ_m}.

Since $B^p$ is isomorphic to the finite quotient $B_{{C},r}/\Gamma$, its dimension is
\[
\dim(B^p) = \dim(B_{{C}, r}) = r^2(g(C) - 1) + 1.
\]
As ${C} \to X$ is an unramified $m$-cover, the genus of ${C}$ equals $g(C) = m(g-1) + 1$ and \eqref{eq dim B^gamma} holds. 
\end{proof}

\begin{definition}\label{def: reducedlocus}
Let $B^{\sm}_{C,r}$ be the dense open subset of $B_{C,r}$ given by those points $a\in B_{C,r}$ whose associated spectral curve $C_a$ is smooth. Let us also consider the $\Gamma$-invariant subset $B^{\fs}_{{C},r}$ to be the open subset of $B^{\sm}_{{C},r}$ given by those elements where $\Gamma$ acts freely. Next, define $B_{C,r}^{\ni} \subset B_{C,r}^{\fs}$ as the subset given by those curves $C_a$ whose intersection with $C_{\gamma(a)}$ has nodal singularities, for every $\gamma \in \Gamma$, and such that $C_a \cap C_{\gamma(a)}\cap C_{\gamma'(a)}=\emptyset$ for all $\gamma'\in\Gamma\setminus\{1,\gamma\}$. Set as well $B^p_{\fs} := B^\fs_{C,r} /\Gamma$ and  $B^p_{\ni} := B^\ni_{C,r} /\Gamma$. Finally, define 
\[
\M_X(n,d)^p_\ni := \M_X(n,d)^p \times_{B^p} B^p_\ni
\]
and 
\[
\M_C(r,d)_\ni := \M_C(r,d) \times_{B_{C,r}} B^\ni_{C,r}. 
\]
\end{definition}

\begin{remark}
Note that $B^p_{\fs}$ parametrizes reduced curves by Propositions \ref{prop:equationspectralcurve} and \ref{pr cartesian diagram with X_wtb} \eqref{it equiv-reducibility}. By Theorem \ref{thm:normaliz} (v) 
below, such spectral curves are {\it integral}, explaining the notation. The notation used for $B^p_{\ni}$ stands for {\it nodal and integral}.
\end{remark}

Recall that we defined $\delta$ in \eqref{eq:delta} as the degree of the ramification divisor of the spectral curves in $B_{X,n}$. Accordingly, we define $\rho$ to be the degree of the ramification divisor of the spectral curves ${C}_{a} \to {C}$ in $B_{{C}, r}$,
\begin{equation} \label{eq def delta_gamma}
\rho :=r(r-1)(g(C)-1) = n(r-1)(g-1).
\end{equation}
%

Next, we describe the geometry of the spectral curves $X_b$ when $b$ lies in $B^p_\fs$. For the statement of the following theorem, recall the notation of the Cartesian diagrams \eqref{eq cartesian diagram spectral} and \eqref{eq hat pgamma}.

\begin{theorem}\label{thm:normaliz}
Let $a \in B^\fs_{C,r}$ and consider $b = \zeta(a) \in B_\fs^p$ and $\tilde{b}={p}^*b\in B_{{C},n}$.
\begin{enumerate}[(i)]
\item \label{it irred comp X_tildeb} 
The spectral curve  ${C}_{\tilde{b}}$ is reduced, connected, with $m=|\Gamma|$ irreducible components $\{ {C}_{\gamma(a)} \}_{\gamma \in \Gamma}$, all isomorphic to each other. The singular divisor $\sing({C}_{\tilde{b}})$ of  ${C}_{\tilde{b}}$ is given by the intersections of distinct components, and has degree $n^2(m-1)(g-1)$. The Galois group $\Gamma$ of $q_{\tilde{b}}$ permutes the components of ${C}_{\tilde{b}}$.

\item \label{Xb singular} 
The spectral curve  $X_b$ is reduced but singular. Its singular divisor $\sing(X_b)$ satisfies $q_{\tilde{b}}^*\sing(X_b)=\sing(C_{\tilde{b}})$. In particular
\begin{equation} \label{eq deg sing X_b}   
\deg(\sing(X_b))=\delta - \rho = n(n-r)(g-1). 
\end{equation}
 If $x\in X_b$ is a singularity, and
$y \in {C}_{\gamma'(a)} \cap {C}_{\gamma''(a)}$ (with $\gamma'\neq \gamma''$) is a
singular point on ${C}_{\tilde{b}}$ mapping to $x$, then the $m$
singularities of ${C}_{\tilde{b}}$ mapping to $x$ are precisely the ones of the form
$\gamma(y)\in {C}_{\gamma\gamma'(a)} \cap {C}_{\gamma\gamma''(a)}$, for all
$\gamma \in \Gamma$.

\item \label{it description of X_b} 
Let  $C_{a}\subset {C}_{\tilde{b}}$ be an irreducible component. Let $q : |K_C| \to |K_X|$ be as in \eqref{eq cartesian diagram total spaces}. Then $q(C_{a})=q_{a}(C_{a})=X_b$ and 
\begin{equation} \label{eq def nu_a}
\nu_{a}:= q_{a} : {C}_{a} \longrightarrow X_b
\end{equation}
is a normalization fitting in the commutative diagram 
\begin{equation}\label{eq normalization X b}
\xymatrix{
{C}_{a}\ar[r]^{\eta_{a}}\ar[d]_{\nu_{a}}&{C}\ar[d]^-{{p}}
\\
X_b\ar[r]_{\pi_b}&X.
}
\end{equation}
In addition, 
\begin{equation} \label{eq relation between the nu_a i}
\nu_{\gamma(a)} = q_{\gamma(a)} =\nu_{a} \circ \gamma.
\end{equation}

\item \label{it Cartesian in X_gamma} 
The (disconnected) curve in $|K_C| \times B_{{C},r}$ given by
\[
\wt{C}_{\tilde{b}}=\bigsqcup_{\gamma\in \Gamma}{C}_{ \gamma(a)}\times\{\gamma(a)\}
\]
is the normalization of ${C}_{\tilde{b}}$, where the normalization morphism $\tilde\nu_{\tilde{b}}:\wt{C}_{\tilde{b}}\to {C}_{\tilde{b}}$ is given by projecting onto the first factor. Furthermore, the diagram
\begin{equation}\label{normalizXtildeb-2}
\xymatrix{\widetilde {C}_{\tilde{b}}\ar[r]^{\tilde\nu_{\tilde{b}}}\ar[d]_{\wt{q}_{a}}& {C}_{\tilde{b}}\ar[d]^{q_{\tilde{b}}}\\
{C}_{a}\ar[r]_{\nu_{a} }&X_b}
\end{equation}
is Cartesian, where $\wt{q}_{a}$ is the unramified $\Gamma$-cover
given by $\wt{q}_{a}(z,\gamma(a))=\gamma^{-1}(z)$.

\item \label{Xb irreducible} The spectral curve $X_b$ is integral.
\end{enumerate}
\end{theorem}

\begin{proof}
Since $a$ is taken in $B^\fs_{C,r}$, ${C}_{ \tilde{b}}$ satisfies the hypothesis of the second part of Proposition~\ref{prop:equationspectralcurve}. Then, ${C}_{ \tilde{b}}$ is reduced and reducible, decomposing as described in \eqref{eq decomposition of X_wtb}. Again by hypothesis, ${C}_{ a}$ is smooth and therefore irreducible. Furthermore, $C_a \cong C_{\gamma(a)}$ by Proposition \ref{pr Hitchin map is Galois equivariant}. Therefore, \eqref{eq decomposition of X_wtb} is, in fact, a decomposition of ${C}_{\tilde{b}}$ into its irreducible components and the singularities of ${C}_{\tilde{b}}$ are the intersections of the distinct components, \[
\sing(C_{\tilde{b}}) = \bigcup_{\gamma \neq \gamma'} C_{\gamma(a)} \cap C_{\gamma'(a)}.
\]
Note that all the $C_{\gamma(a)}$ lie in the linear system $rC$ inside $|K_C|$. This has two important consequences. The first one is that we can perform the count of the intersection divisor taking a generic element of $B^p_{\fs}$ without triple intersections of the components $C_{\gamma(a)}$. The second one is that two distinct components, ${C}_{\gamma(a)}$ and ${C}_{\gamma'(a)}$, lie in the linear system of $rC$ inside $|K_C|$, so their intersection is 
\begin{equation} \label{eq intersection of components X_gamma,a}
r^2 \cdot ({C})^2 = r^2(g(C) - 1) = 2nr(g-1).
\end{equation}
Hence, 
\[
\deg(\sing({C}_{\tilde{b}}))= \binom{m}{2}2nr(g-1) = n^2(m-1)(g-1).
\]
It follows from Proposition \ref{pr Hitchin map is Galois equivariant} that the Galois group permutes the $C_{\gamma(a)}$. This  completes the proof of \eqref{it irred comp X_tildeb}.

For \eqref{Xb singular}, note that by \eqref{it irred comp X_tildeb} and Proposition~\ref{pr cartesian diagram with X_wtb} \eqref{it equiv-reducibility}, $X_b$ is reduced. The rest follows from the fact that $q_{\tilde{b}} : C_{\tilde{b}} \to X_b$ is an unramified $\Gamma$-cover, as we have shown in Proposition~\ref{pr cartesian diagram with X_wtb} \eqref{it q is unramified}.


To see \eqref{it description of X_b}, we start by observing that \eqref{eq relation between the nu_a i} follows from \eqref{eq cart diagram Xolb}. Then, the maps $\{ q_{\gamma(a)} = \nu_{\gamma(a)} \}_{\gamma \in \Gamma}$, have all the same image. From this, in view of \eqref{it irred comp X_tildeb} and the fact that $q_{\tilde{b}}:{C}_{\tilde{b}}\to X_b$ is surjective, we conclude that $q_{\gamma(a)} = \nu_{\gamma(a)}$ is surjective for each $\gamma \in \Gamma$.  
Since ${C}_{a}$ is smooth, in order to prove that the maps $\nu_{\gamma(a)}$ are normalization morphisms, it now suffices to show that one of them (say $\nu_a$) is generically injective. If $x\in X_b$ does not belong to the ramification locus $\sing(X_b)$ then, by \eqref{it irred comp X_tildeb} and \eqref{Xb singular}, each point inside the fiber $q_{\tilde{b}}^{-1}(x)$ lies in a different irreducible component of ${C}_{\tilde{b}}$. So $\nu_{a}$ is injective over the smooth locus of $X_b$. The commutativity of \eqref{eq cartesian diagram spectral} concludes the proof of \eqref{it description of X_b}.

Consider now \eqref{it Cartesian in X_gamma}. The map $\tilde\nu_{\tilde{b}}:\wt{C}_{ \tilde{b}} \to {C}_{ \tilde{b}}$ is a normalization morphism by the description of ${C}_{ \tilde{b}}$ and its singularities given in \eqref{it irred comp X_tildeb}. To see that \eqref{normalizXtildeb-2} is Cartesian, note that by the universal property of fibered products there is a morphism 
\[
\xymatrix{\wt{C}_{\tilde{b}}\ar[r]&{C}_{a}\times_{X_b}{C}_{\tilde{b}}}.
\]
Since any morphism of principal bundles is an isomorphism, the statement follows.

Finally, for \eqref{Xb irreducible}, we already know that $X_b$ is reduced. Since ${C}_{a}$ is smooth, it is irreducible, thus $X_b$ is irreducible by \eqref{it description of X_b}. 
\end{proof}

\begin{remark}
For every $b \in B^p_\fs$ the corresponding spectral curve $X_b$ is normalized by a smooth spectral curve ${C}_{ a}$ in $B^\sm_{{C},r}$.   
\end{remark}

We introduce some notation to describe the fibers of the Hitchin map restricted to the  subvariety $\M_X(n,d)^p$. Recall that $\rho$ is defined in \eqref{eq def delta_gamma} as the degree of the ramification divisor of the spectral curves ${C}_{ a} \to {C}$. Consider the pushforward under the normalization morphism $\nu_{a}:{C}_{a} \to X_b$ defined in \eqref{eq def nu_a}, 
\begin{equation} \label{eq definition of check nu_a}
\morph{\Jac^{d+\rho}({C}_{a})}{\ol{\Jac}^{\, d+\delta}(X_b)}{L}{\nu_{a,*}L,}{}{\check{\nu}_{a}}
\end{equation}
where $\delta - \rho = \deg(\sing(X_b))$ by \eqref{Xb singular} of Theorem \ref{thm:normaliz}. Any other $\gamma(a)$ also projects to $b$ under the map $\zeta : B_{{C},r} \to B^p$, defined in Proposition \ref{prop:commutative-rank-1-hitchin}, and for such $\gamma(a)$, a similar map $\check{\nu}_{\gamma(a)}$ exists as well. Furthermore, \eqref{eq relation between the nu_a i} implies that all these morphisms share the same image,
\[
\image (\check{\nu}_a) = \image (\check{\nu}_{\gamma(a)}).
\]

Recall the morphism $\check{p}$ in \eqref{eq definition of check p_gamma}. The following proposition describes the fibers of the Hitchin map over $B^p_\fs$ restricted to $\M_X(n,d)^p$.

\begin{proposition}\label{pr restriction of M^gamma to a Hitchin fiber}
Let $b \in B^p_\fs$, and pick $a \in B^\fs_{C,r}$
such that $\zeta(a)=b$. Then,
\begin{enumerate}[(i)]
\item \label{it restriction of check p_gamma to Hitchin fiber} the first line of diagram \eqref{eq hat pgamma} restricts to 
\begin{equation} \label{eq restriction of hap p_gamma to a fiber}
\bigsqcup_{\gamma \in \Gamma}\Jac^{d+\rho}({C}_{\gamma(a)})  \xrightarrow{\ \check{p}_b\ }  h_{X,n}^{-1}(b) \cap  \M_X(n,d)^p,
\end{equation}
and $\check{p}_b$ is an unramified cover, with the Galois group $\Gamma$ acting by pullback, hence permuting the connected components of the domain. 

\item\label{it h fiber M^p} the intersection of $\M_X(n,d)^p$ with the Hitchin fiber is 
\begin{equation}\label{eq description of the fiber of h^gamma}
h_{X,n}^{-1}(b) \cap  \M_X(n,d)^p \cong \bigsqcup_{\gamma \in \Gamma}\Jac^{d+\rho}({C}_{\gamma(a)})/\Gamma \cong \Jac^{d+\rho}({C}_{a}) \cong \image(\check{\nu}_{a})\subset \overline{\Jac}^{\, d+\delta}(X_b).
\end{equation}
where the inverse of the second isomorphism is defined by assigning to  $L\in\Jac^d({C_a})$ its $\Gamma$-orbit, which can be naturally identified with an element of $\bigsqcup_{\gamma \in \Gamma}\Jac^{d+\rho}({C}_{\gamma(a)})/\Gamma$.
\end{enumerate}
\end{proposition}

\begin{proof}
The first statement in \eqref{it restriction of check p_gamma to Hitchin fiber} is clear by Proposition \ref{prop:commutative-rank-1-hitchin}. The union is disjoint since $a$ is not fixed by any element of $\Gamma$, by definition of $B^\fs_{C,r}$. This also shows that $\check{p}_b$ is an unramified cover with Galois group $\Gamma$ acting by pullback.

To see that \eqref{eq description of the fiber of h^gamma} holds, by {\it (i)} we have
\[
h_{X,n}^{-1}(b) \cap \M_X(n,d)^p \cong\bigg(\bigsqcup_{\gamma \in \Gamma}\Jac^{\, \rho +d}({C}_{\gamma(a)})\bigg)/\Gamma,
\]
hence, recalling that ${C}_{ \gamma(a)} \cong {C}_{ a}$ by Proposition \ref{pr Hitchin map is Galois equivariant}, we can choose a representative and
\[
h_{X,n}^{-1}(b) \cap \M_X(n,d)^p \cong \Jac^{\, \rho +d}({C}_{a}).
\]
Since the restriction of $\check{p}_b$ to $\Jac^{\rho +d}({C}_{a})$ coincides with $\check{\nu}_a$, \eqref{eq description of the fiber of h^gamma} follows.
\end{proof}

We finish the section with an observation that will be useful in Sections~\ref{subsec:Lagrangian} and \ref{sec:non-max-ord}.

\begin{remark}\label{rk Levi and parabolic groups}
It follows from \eqref{eq M^gamma in M_rm} that the image of the map $\hat{p} \circ \check{p}$ is contained in the Levi subgroup $\L_{(r,m)} = \GL(r,\CC)^{\times m}$ associated to the parabolic subgroup $\P_{(r,m)} = N_{\GL(n,\CC)}(\L_{(r,m)})$. Denote the unipotent radical of $\P_{(r,m)}$ by $\U_{(r,m)}$ and recall that $\P_{(r,m)}=\L_{(r,m)}\ltimes \U_{(r,m)}$. Note that $\L_{(1,n)}$ is the Cartan subgroup of $\GL(n,\CC)$ while $\P_{(1,n)}$ is the Borel subgroup.
\end{remark}

Before stating the following corollary, we need to briefly introduce some notation from \cite{Borel}. Denote by $\M_{(r,m)} \subset \M_{{C}}(n,md')$ the image of the moduli space of $\L_{(r,m)}$-Higgs bundles of multidegree $(d', \ldots, d')$. When $r=1$ and $m = n$, $\L_{(1,n)}$ is the Cartan subgroup and $\M_{(1,n)}$ is called in \cite{Borel} the \emph{Cartan locus} of $\M_{{C}}(n,nd')$ (and, as proved in loc. cit., it supports a $\BBB$-brane). Let $V_{(r,m)} \subset B_{{C},n}$ be the image of $\M_{(r,m)}$ under the Hitchin map $h_{{C},n}$.

\begin{corollary}\label{cor:pullbacktoCar}
The commutative diagram of Lemma~\ref{pullbackpgamma:inj&imag} restricts to 
\[
\xymatrix{\M_{X}(n,nd')^p\ar[r]^-{\hat{p}}\ar[d]_{h_{X,n}}&\M_{(r,m)}\ar[d]^{h_{{C},n}}
\\
B^p\ar@{^{(}->}[r]^{{p}^*}&V_{(r,m)}.}
\]
\end{corollary}

\begin{proof} It is enough to prove that $\hat{p}$ maps
$\M_{X}(n,d)^p$ to $\M_{(r,m)}$ and this follows from \eqref{eq M^gamma in M_rm}.
\end{proof}

\subsection{Cyclic covers and tensorization by torsion line bundles}

We study in this section the fixed point subvariety of $\M_X(n,d)$
under tensorization by a fixed line bundle of order $n$. For the moduli of vector bundles, Narasimhan and Ramanan \cite{NR} proved that bundles on $X$ which are in the image of the pushforward map from $C$ are fixed by tensorization with all line bundles associated to characters of the Galois group $\Gamma$. When $\Gamma$ satisfies a certain technical condition (which is the case, for example, of cyclic covers, see \cite[Lemma 2.5]{NR}), the converse is also true for simple bundles. Nasser \cite{nasser:2005} proved that when $\Gamma\cong\ZZ_n$ is cyclic, the converse holds also for non simple bundles. The study of these fixed points in the moduli of vector bundles with fixed determinant, has also been carried out in \cite{garciaprada-ramanan} and, in detail in rank $n=2$, in \cite{garciaprada-ramanan-2}.

Let $\Jac(X)[n]$ be the subgroup of $n$-torsion points of the Jacobian $\Jac(X)=\Jac^0(X)$ of $X$,
\[\Jac(X)[n]:=\{L\in\Jac(X)\suchthat L^n\cong\Oo_X\}\cong\ZZ_n^{2g}.\]  
For convenience of notation, we shall use different symbols for an element $\xi\in \Jac(X)[n]$ as an abstract group and for the corresponding order $n$ line bundle $L_\xi\in\Jac(X)[n]$ on $X$.

This group acts on $\M_X(n,d)$ by tensorization,
\begin{displaymath}
\morph{\M_X(n,d)}{\M_X(n,d)}{(E,\varphi)}{(E\otimes L_\xi, \varphi).}{}{\xi}
\end{displaymath}
Denote by $\M_X(n,d)^{\xi}$ the subvariety of points fixed by $\xi\in\Jac(X)[n]$. It is a hyperholomorphic subvariety since tensorization by a flat line bundle is holomorphic in the three complex structures of
$\M_X(n,d)$ (see \cite{garciaprada-ramanan} for a proof in the case of
$\SL(n,\CC)$-Higgs bundles, which also applies to the case of
$\GL(n,\CC)$).
\begin{notation}
Let $m$ be the order of $L_\xi$ in $\Jac(X)[n]$, and set $r = n/m$. 
\end{notation}

Then $L_\xi \in \Jac(X)[m]$ is a primitive element.  Consider the projection ${p}_\xi : |L_\xi| \to X$ and let  $\lambda_\xi : |L_\xi| \to {p}_\xi^*L_\xi$ be the  tautological section.  
Define $C_\xi$ to be the curve in the total space $|L_\xi|$ given by the zero locus of the section $\lambda_\xi^m - {p}_\xi^*\mathbf{1}\in H^0(|L_\xi|,\Oo_{|L_\xi|})$. Denote the restriction to $C_\xi$ of the projection morphism by the same symbol,
\begin{equation}\label{eq pgamma}
{p}_\xi: {C}_\xi \to X.
\end{equation}
Then, \eqref{eq pgamma} is a connected unramified regular cover of $X$ with Galois group $\ZZ_m$.

\begin{remark}
Reciprocally, a connected unramified regular $\ZZ_m$-cover $p: C \to X$ defines a line bundle $L_p \to X$ of order $m$ by setting the holonomy of $L_p$ to be given by parallel transport of the lifts from $X$ to $C$. 
\end{remark}

We next describe all the points fixed by any $\xi \in \Jac(X)[n]$.


\begin{proposition}\label{prop:fixed-points-any-order}
Let $\xi\in\Jac(X)[n]$ be of order $m$ and let $r=n/m$. A semistable Higgs bundle $(E,\varphi)\in\M_X(n,d)$ is fixed under $\xi$ if and only if it is the pushforward of an element in $\M_{{C}_\xi}(r,d)$.
\end{proposition}

\begin{proof}
Since semistability and degree are preserved under pushforward by $p_\xi$ (cf. \cite[Lemma~3.1]{NR}) and since points in the image of the pushforward are fixed (this is a direct consequence of the projection formula), all we need to prove is that all fixed points are in such image. 

Let then $(E, \varphi)\in \M_X(n,d)^\xi$. Then there exists an isomorphism 
\[
f:E\stackrel{\cong}{\longrightarrow}E\otimes L_\xi
\]
such that 
\[
\varphi \otimes \Id_{L_\xi} = f \otimes \Id_{L_\xi} \circ \varphi \circ f^{-1}.
\]
We can then treat the pair $(E, f)$ as an $L_\xi$-twisted Higgs bundle of rank $n$ (i.e. the Higgs field $f$ is twisted by $L_\xi$ instead of $K_X$). The spectral correspondence described in section \ref{sec:the Hitchin system} also holds for  $L_\xi$-twisted Higgs bundles \cite{BNR,Schaub}, hence establishing a one-to-one correspondence between isomorphism classes of pairs $(E,f)$ and their spectral datum, $\Ll \to C'$, where $\Ll$ is a  rank one torsion free sheaf on the corresponding spectral curve $C' \subset |L_\xi|$, given by the vanishing of 
the section $\sum_{i=1}^np_\xi^*s_i\lambda_\xi^{n-i}$ of ${p}_\xi^*L_\xi^n\cong\Oo_{|L_\xi|} \to |L_\xi|$, where $s_i\in H^0(X,L_\xi^i)$. Since $L_\xi$ has no global sections unless it is trivial, in which case all the global sections are constant, we find that $C'$ is given by the vanishing of
\begin{equation} \label{eq spectral curve for L_gamma}
\lambda^{m \cdot r}_\xi + {p}_\xi^*s_m \lambda^{m \cdot (r-1)}_\xi + \dots + {p}_\xi^*s_{mr},
\end{equation}
where $s_i \in H^0(X, L_\xi^{m(r-i)})=\CC$.
We can recover $f : E \stackrel{\cong}{\to} E \otimes L_\xi$ as the pushforward of the tensorization morphism $\mu_{\lambda_\xi}:\Ll \stackrel{\otimes \lambda_\xi}{\to} \Ll \otimes {p}^*L_\xi$.

Consider now the curve $C_0'\subset |L_\xi^m|\cong|\Oo_X|$ given by 
\begin{equation} \label{eq spectral curve for Oo}
\lambda^{r}_0 + p_0^*s_1 \lambda^{r-1}_0 + \dots + p_0^*s_{r},
\end{equation}
where $p_0 : |\Oo_X| \to X$ and $\lambda_0\in H^0(|\Oo_X|,\Oo_{|\Oo_X|})$ is the tautological section. Note that $C_0'$ is naturally identified with the characteristic polynomial of $f^m:E\to E$.

Since $L^m_\xi\cong\Oo_X$, there is a morphism $|L_\xi|\longrightarrow |L_\xi^m|$ given by tensorization with itself $m$-times. Under this morphism, $C'$ is sent to $C'_0$. Fixing a trivialization $\Oo_X \cong X \times \CC$, since the sections $s_i$ are constant, one has $C'_0 \cong X \times D$, where $D = \sum_i \ell_i d_i$ with $\deg D = r$ is the divisor of the points in $\CC$ defined by the zeros of the polynomial associated to \eqref{eq spectral curve for Oo}. It follows that \eqref{eq spectral curve for L_gamma} can be rewritten as
\[
\prod_{j=1}^t (\lambda_\xi^m - {p}^*d_j)^{\ell_j}. 
\]
Since $f$ is an isomorphism it follows that $\det(f) = \prod_{j=1}^t d_j^{\ell_j} \neq 0$, so all the $d_j$ are non-zero. Then, the vanishing locus of $\lambda_\xi^m - {p}^*d_j$ is naturally isomorphic to ${C}_\xi$ after scaling. It then follows that $C'=\bigsqcup_{j=1}^t{C}_\xi^{l_j}$, where ${C}_\xi^{l_j}$ is the possibly non reduced   curve given by the vanishing of $(\lambda_\xi^m - {p}^*d_j)^{l_j}$. As a consequence $C'$ projects onto ${C}_\xi$ and the following diagram commutes
\[
\xymatrix{C' \ar[rd]_{p'} \ar[r]^{u} & {C}_\xi \ar[d]^{{p}_\xi}
\\
& X.
}
\]
Then, setting $E' := u_*\Ll$ one has that
\[
{p}_{\xi,*} E' \cong E.
\]
Taking $f' : E' \stackrel{\cong}{\to} E'\otimes {p}_\xi^*L_\xi$ to be the pushforward under $u$ of $\Ll \stackrel{\otimes \lambda_\xi}{\to} \Ll \otimes ({p}_\xi\circ t)^*L_\xi$, one also has
\[
p_{\xi,*}f' \cong f.
\]

Since $p_{\xi}$ is unramified, one has that ${p}_\xi^*E$ is given by $m$ copies of $E'$ although we denote them by 
\begin{equation} \label{eq decomposition of p*E}
{p}_\xi^*E \cong \bigoplus_{i=0}^{m-1} E' \otimes {p}_\xi^*L_\xi^i,
\end{equation}
where we recall that ${p}_\xi^*L_\xi$ is isomorphic to $\Oo_{{C}_\xi}$. Note also that ${p}_\xi^*f$ is given by copies of $f'$, permuting cyclically the factors of ${p}_\xi^*E$,
\[
f' : E' \otimes {p}_\xi^*L_\xi^i \stackrel{\cong}{\longrightarrow} E' \otimes {p}_\xi^*L_\xi^{i+1}.
\]

After \eqref{eq decomposition of p*E}, the pullback ${p}_\xi^*\varphi$ can be described in terms of a $m \times m$ matrix ${p}_\xi^*\varphi = \left ( \varphi'_{ij} \right)$, where $\varphi'_{ij} : E' \otimes {p}_\xi^*L_\xi^i \longrightarrow E' \otimes {p}_\xi^*L_\xi^i \otimes K_C$. Taking a Jordan-H\"older filtration of $({p}_\xi^*E, {p}_\xi^*\varphi)$ one can always write ${p}_\xi^*\varphi$ in a upper diagonal form, {\it i.e.} with $\varphi'_{ij} = 0$ if $i < j$. Observe that ${p}_\xi^* f$ corresponds with the permutation of the factors of ${p}_\xi^*E$ by $i \mapsto i + 1$. Since $\varphi$ and $f$ commute, so do ${p}_\xi^* \varphi$ and ${p}_\xi^* f$, and therefore $\varphi'_{ij} = 0$ for any $i \neq j$ and $\varphi'_{ii} = \varphi'_{jj}$. Picking $\phi = \varphi'_{ii}$, one has that $\varphi = {p}_{\xi,*} \phi$.
\end{proof}

The following is the fundamental result describing the fixed point
subvariety $\M_X(n,d)^{\xi}$ for any $\xi \in \Jac(X)[n]$. This generalizes the description given in \cite{HT} for $r$ and $d$ coprime. 

\begin{theorem}
\label{thm:narasimhan-ramanan:1975}
Let $\xi\in\Jac(X)[m]$ be of order $m$ with $n=mr$. Then pushforward under
$p_\xi:C_\xi\to X$ induces a hyperholomorphic isomorphism
\[
\M_X(n,d)^{\xi} \cong \M_X(n,d)^{p_\xi} \cong \M_{C_\xi}(r,d)/\ZZ_m,
\]
with the Galois group $\ZZ_m$ acting by pullback.
\end{theorem}
\proof
Straightforward from Propositions \ref{prop:push-fixed} and \ref{prop:fixed-points-any-order}.
\endproof


\section{Narasimhan--Ramanan $\BBB$-branes for covers of maximal degree}
\label{sc BBB branes}

By definition (cf.~\cite{kapustin&witten}), a \emph{$\BBB$-brane} on a
hyperk\"ahler manifold $\M$ is a pair \[(\mathrm{N},(\Fff, \nabla_{\Fff})),\] where:
\begin{itemize}
\item $\mathrm{N}\subset\M$ is a \emph{hyperholomorphic subvariety}, {\it i.e.}\ a subvariety which is holomorphic with respect to the three complex structures $I_1$, $I_2$ and $I_3$.
\item $(\Fff, \nabla_{\Fff})$ is a \emph{hyperholomorphic sheaf}
  supported on $\mathrm{N}$, {\it i.e.}\ a locally free sheaf $\Fff$
  of finite rank over the ring of $C^\infty$-functions on $\mathrm{N}$
  equipped with a
  connection $\nabla_{\Fff}$ whose curvature is of type $(1,1)$ in the
  complex structures $I_1$, $I_2$ and $I_3$.
\end{itemize}


In this section we construct natural $\BBB$-branes on the moduli space of Higgs bundles supported on the image under $\check{p}$.  We shall construct two different
hyperholomorphic sheaves on this subvariety. Our
constructions depend on the choice of a flat line bundle either on $X$
(yielding a \emph{rank 1} $\BBB$-brane, in Gukov's \cite{gukov} terminology) or on ${C}$ (yielding a \emph{rank $n$} $\BBB$-brane). 

\begin{assumption} \label{ass gamma of maximal order}
From now on, until the end of this section, we will be assuming that the rank $n$ coincides with the order of the unramified cover $p : C \to X$. So, in the notation of Proposition \ref{prop:push-fixed}, $m = n$ and $r = 1$. In this case we say that the cover $p$ is of {\it maximal degree} (note that for fixed rank $n$, the degree of $p:C\to X$ yielding Narasimhan-Ramanan branes in $\M_{X}(n,d)$ is bounded by $n$).
\end{assumption}

Under this assumption ${p}\colon {C} \to X$ is a connected unramified
$n$-cover of $X$, with genus
\begin{equation}\label{eq:genusXgamma}
g({C})=n(g-1)+1
\end{equation}
and 
\[
\check{p}:\M_{{C}}(1,d)\to \M_{X}(n,d)^p
\]
is the finite morphism \eqref{eq definition of check p_gamma}.

Since the moduli space of rank $1$ Higgs bundles is the cotangent bundle of the Jacobian of $C$, we have the natural projection 
\begin{equation}\label{eq:pr1}
\beta : \M_{{C}}(1,d)\cong T^*\mathrm{Jac}^d({C}) \longrightarrow \Jac^d({C}),
\end{equation}
which is $\Gamma$-equivariant.

\begin{remark}
  The support of our $\BBB$-branes will be
  $\mathrm{N}=\M_{X}(n,d)^p\cong \M_{{C}}(1,d)/\Gamma$. Over the generic locus, it is smooth, so it makes sense to speak of a hyperholomorphic sheaf on a dense open subset of  $\mathrm{N}$. Moreover, it will be clear from our
  construction that the branes extend to coherent sheaves (in complex
  structure $I_1$) on $\M_{X}(n,d)$, supported on $\mathrm{N}$.
\end{remark}

Let 
\[(\Ff,\nabla_{\Ff}) \longrightarrow {C}\] 
be a flat line bundle on ${C}$. Since $\pi_1(\Jac^d({C}))$ is the abelianization of $\pi_1({C})$, there is a unique flat line bundle 
\begin{equation}\label{eq:checkFf}
(\widecheck{\Ff}, \check{\nabla}_{\Ff})\longrightarrow\Jac^d({C})
\end{equation}
which restricts to
$(\Ff, \nabla_{\Ff})$ on ${C}\subset \Jac^d({C})$, viewed as subspace  under the Abel--Jacobi map. Define
\begin{equation}\label{eq:Fff}
(\Fff, \nabla_{\Fff})
:=\check{p}_{*}\beta^*(\widecheck{\Ff}, \check{\nabla}_{\Ff}).
\end{equation}
Then $(\Fff, \nabla_{\Fff})$ is a rank $n$ coherent sheaf over
$\M_X(n,d)^p$. Since $\check{p}$ is hyperholomorphic and $\beta^*(\widecheck{\Ff}, \check{\nabla}_{\Ff})$ is flat (thus with curvature trivially of type $(1,1)$ in any complex structure),
it is also hyperholomorphic, and then so is $(\Fff, \nabla_{\Fff})$. Hence the pair
$(\M_X(n,d)^p, (\Fff, \nabla_{\Fff}))$ is a $\BBB$-brane.

\begin{definition}\label{def:nonbasicNR-BBBbrane}
Let $p: C \to X$ be a connected unramified $n$-cover and let $(\Ff,\nabla_{\Ff})$ be a flat
line bundle on ${C}$. The \emph{rank $n$
Narasimhan--Ramanan $\BBB$-brane} is
$$
\BBBB_{\Ff}^p := \left ( \M_X(n,d)^p, (\Fff, \nabla_{\Fff}) \right).
$$ 
We will omit the cover from the name when it is clear from the context.
\end{definition}
The brane just constructed is a rank $n$ brane in Gukov's language
\cite{gukov}.  Next we construct a rank $1$ brane arising from a flat line
bundle on the base curve $X$. Since  $\beta$ in
\eqref{eq:pr1} is $\Gamma$-equivariant and the norm map $\Nm:\Jac^d({C})\to\Jac^d(X)$ of $p$ is $\Gamma$-invariant, their composition is also $\Gamma$-invariant, and we have the following quotient maps (denoted by a bar).
\begin{equation}
\label{eq:def-Delta}
\alpha\colon\M_X(n,d)^p\cong \M_{{C}}(1,d)/\Gamma\xrightarrow{\ol{\beta}}
\Jac^d({C})/\Gamma\xrightarrow{\ol{\Nm}}\Jac^d(X),
\end{equation}
where the isomorphism is given by Theorem~\ref{thm:narasimhan-ramanan:1975}.

\begin{remark}
The map  $\alpha\colon\M_X(n,d)^p\to \Jac^d(X)$ can be interpreted as a twisted determinant map.
Indeed,  if $(E,\varphi)=p_{*}(F,\phi)$ with
$F\in\mathrm{Jac}^d({C})$, then
$\det(E)=\det(p_{*}F)=\Nm(F)\det(p_{*}\Oo_{{C}})$, so
\[
\alpha(E,\varphi)=\det(E)\det(p_{*}\Oo_{{C}})^{-1}.
\]
\end{remark}

Let now 
\((\Ll,\nabla_\Ll) \longrightarrow X\) 
be any flat line bundle on the base curve $X$ and let  
\begin{equation}\label{eq:checkLl}
(\widecheck{\Ll}, \check{\nabla}_\Ll)\longrightarrow\Jac^d(X)
\end{equation}
be the associated flat line bundle as above. Define the flat line bundle on $\M_{X}(n, d)^p$ by
\begin{equation}\label{eq:Lll}
(\Lll, \nabla_\Lll):=\alpha^*(\widecheck{\Ll}, \check{\nabla}_\Ll).
\end{equation} 

\begin{definition}\label{def:basicNR-BBBbrane}
Let $p: C \to X$ be a connected unramified $n$-cover and let $(\Ll,\nabla_\Ll)$ be a flat
line bundle on $X$. The associated \emph{rank $1$
Narasimhan--Ramanan $\BBB$-brane} is
$$
\BBBB_\Ll^p := \left ( \M_{X}(n,d)^p, (\Lll, \nabla_\Lll) \right).
$$ 
\end{definition}

In the remaining part of this section we shall study the restriction of the branes $\BBBB_\Ll^p$ and $\BBBB_{\Ff}^p$ to a certain Hitchin fiber. Recall that $m = n$ so $r = 1$ in our case. Then, $B^{\red}_{{C},1} = B_{C,1}^{\ni} = B^{\sm}_{{C},1} = B_{{C},1} = H^0({C},K_C)$. Therefore $B^p_{\fs} = B^p_{\ni}$ by Proposition \ref{prop:equationspectralcurve}. Also, note that $B_{{C},1}^\fs$ is just the subset of elements with free $\Gamma$-orbits,
\begin{equation} \label{eq df H^gamma_red}
H^0({C}, K_C)^\free := \{ \phi \in H^0({C}, K_C) \textnormal{ such that } \gamma(\phi) \neq \phi \textnormal{ for any } \gamma \in \Gamma \}. 
\end{equation}
Proposition \ref{prop:commutative-rank-1-hitchin} and Theorem \ref{thm:normaliz} imply that, in this particular case, $B^p_{\ni}=H^0(C,K_C)^\free/\Gamma$. Moreover, for $b \in B^p_{\ni}$ such that $b=\zeta(\phi) \in B^p_{\ni}$, for $\phi\in H^0({C}, K_C)^\free$, the spectral curve $X_b$ is singular and integral, with $\deg(\sing(X_b))=\delta$. Furthermore, using the notation of \eqref{eq cartesian diagram spectral},
\begin{equation} \label{eq def nu_phi}
\nu_{\phi} := q_\phi \colon {C} \to X_b
\end{equation}
is a normalization morphism and the following diagram commutes:
\begin{displaymath} 
\xymatrix{
&{C}\ar[dr]^{\nu_{{\phi}}}\ar[dd]_{{p}}&&
\\
&&X_b\ar[dl]^{\pi_b}&
\\
& X.&&
}
\end{displaymath} 
In addition, the same statement holds for $\nu_{\gamma(\phi)}=q_{\gamma(\phi)}=\gamma\circ\nu_\phi:C\to X_b$, for every $\gamma\in\Gamma$.

\begin{remark}
Notice that in this case, namely when the cover $p:C\to X$ has degree $n$, the normalization of the spectral curve $X_b$ with $b$ in $B^p_{\ni}$ is $C_\phi$, the spectral curve for the moduli of rank $1$ Higgs bundles over $C$, associated to $\phi$. But $C_\phi$ is isomorphic to $C$ via the section $\phi:C\xrightarrow{\cong} C_\phi\subset |K_C|$, so all spectral curves share (up to isomorphism) the same normalization $C$. This justifies the slightly different notation for the normalization morphism in \eqref{eq def nu_phi}, when compared with \eqref{eq def nu_a}. We have implicitly used the identification $C_\phi\cong C$ in \eqref{eq def nu_phi}, so that, strictly speaking, $\nu_\phi=q_\phi\circ\phi$.
\end{remark}

In this case, we denote the  pushforward morphism \eqref{eq definition of check nu_a} by 
\begin{equation}\label{eq:pushforwardnormalizr=1}
\check{\nu}_\phi:\Jac^d(C)\hookrightarrow\overline{\Jac}^{\, d+\delta}(X_b),
\end{equation} and Proposition \ref{pr restriction of M^gamma to a Hitchin fiber} \eqref{it h fiber M^p} reads as follows
\begin{equation}\label{eq fiber as quotient of fibers}
h_{X,n}^{-1}(b) \cap \M_X(n,d)^p \cong\bigg(\bigsqcup_{\gamma \in \Gamma}(\Jac^d({C}) \times \{\gamma(\phi)\})\bigg)/\,\Gamma \cong \Jac^d({C})\cong \image(\check{\nu}_\phi)\subset \overline{\Jac}^{\, d+\delta}(X_b),
\end{equation}
with the inverse of the second isomorphism defined by choosing one representative $\phi$ in the  $\Gamma$-orbit given by $b$, and taking
\begin{equation}\label{eq:isomJac^d}
f_\phi(L)=\Gamma\cdot (L,\phi)
\end{equation}
for $L\in\Jac^d({C})$.

For each $\gamma \in \Gamma$, let $\hat{\gamma}:\Jac^d({C})\to\Jac^d({C})$ denote the pullback map associated to the covering automorphism of ${C}$ determined by $\gamma$.
In the following proposition we study how $\Fff$ and $\Lll$, defining the Narasimhan-Ramanan $\BBB$-branes, restrict to a Hitchin fiber in $B^p_{\ni}$.

\begin{proposition} \label{prop spectral data BBB'}
Let $b \in B^p_{\ni}$, and let $\Lll$ and $\Fff$ be the hyperholomorphic bundles defined in \eqref{eq:checkLl} and \eqref{eq:Fff} respectively. The restrictions of  $\Lll$ and of  $\Fff$ to $h_{X,n}^{-1}(b) \cap \M_X(n,d)^p$ are identified, under the isomorphism \eqref{eq fiber as quotient of fibers}, with the bundles $\Nm^*\widecheck{\Ll}$ and  $\bigoplus_{\gamma \in \Gamma} \hat{\gamma}^{*}\widecheck{\Ff}$ respectively.
\end{proposition}

\begin{proof}
Pick some $\phi\in H^0({C}, K_C)^\free$ such that $\zeta(\phi)=b$ and consider the following commutative diagram
\[
\xymatrix{
\Jac^d({C})\cong\Jac^d({C})\times\{\phi\}\ar[d]_{f_\phi}\ar@{^(->}[r]^-{\tilde i}& T^*\Jac^d({C})\cong\M_{{C}}(1,d)\ar[r]^-{\beta}\ar[d]^{\check{p}}&\Jac^d({C})\ar[d]^{\Nm}
\\
h_{X,n}^{-1}(b) \cap \M_X(n,d)^p \ar@{^(->}[r]_-{i}&T^*\Jac^d({C})/\Gamma\cong\M_{X}(n,d)^p\ar[r]_-{\, \alpha}&\Jac^d(X),
}
\]
where left vertical isomorphism is the one given in \eqref{eq fiber as quotient of fibers}, and $i,\tilde i$ are the obvious inclusions. 

By definition, $\Lll=\alpha^*\widecheck{\Ll}$ hence, by commutativity of the diagram and the fact that $\beta\circ\tilde i=\id_{\Jac}$, it follows that $i^*\Lll$ is identified, via the isomorphism \eqref{eq fiber as quotient of fibers}, with $\tilde i^*\beta^*\Nm^*\widecheck{\Ll}=\Nm^*\widecheck{\Ll}$, as claimed.

Recall from \eqref{eq:Fff} that $\Fff=\check{p}_{*}\beta^*\widecheck{\Ff}$. Now, by Proposition~\ref{pr restriction of M^gamma to a Hitchin fiber} we have a commutative diagram
$$
\xymatrix{
\Jac^d({C})\cong\Jac^d({C})\times\{\phi\}\ar[dr]_\cong\ar@{^(->}[r]_-j\ar@/^1.5pc/[rr]^-{\tilde{i}}&\bigsqcup_{\gamma \in \Gamma} \Jac^d({C}) \times \{\gamma(\phi)\}\ar[d]_{\check{p}_b}\ar@{^(->}[r]&\M_{{C}}(1,d)\ar[d]^{\check{p}}\\
&h_{X,n}^{-1}(b) \cap \M_X(n,d)^p\ar@{^(->}[r]_-i&\M_X(n,d)^p.
}
$$
where the diagonal isomorphism is the one given by \eqref{eq fiber as quotient of fibers}, the rightmost square is Cartesian and the central downward arrow is the unramified $\Gamma$-cover $\check{p}_b:\bigsqcup_{\gamma \in \Gamma} \Jac^d({C}) \times \{\gamma(\phi)\}\to h^{-1}_{X,n}(b) \cap \M_X(n,d)$, whose action of the Galois group in the bundle factor is given by the pullback  $\hat{\gamma}$  associated to each  $\gamma\in\Gamma$. 
Under the isomorphism \eqref{eq fiber as quotient of fibers}, $i^*\Fff$ is identified with $\tilde{i}^*\check{p}^*\Fff$ and, since $\beta\circ\tilde i=\id_{\Jac}$,
\[
\tilde{i}^*\check{p}^*\Fff=\tilde{i}^*\check{p}^*\check{p}_{*}\beta^*\widecheck{\Ff}=\tilde{i}^*\bigoplus_{\gamma \in \Gamma} \hat{\gamma}^{*}\beta^*\widecheck{\Ff}=\tilde{i}^*\beta^*\bigoplus_{\gamma \in \Gamma} \hat{\gamma}^{*}\widecheck{\Ff}=\bigoplus_{\gamma \in \Gamma} \hat{\gamma}^{*}\widecheck{\Ff},
\]
completing the proof.
\end{proof}

\begin{remark} \label{rk BBB^p pulls back to Car}
In \cite{Borel}, the first and fourth authors constructed a brane $\mathbf{Car}(\Ll)$ supported on the Cartan locus $\M_{(1,n)} \subset\M_{{C}}(n,nd)$. Recall from Remark \ref{rk Levi and parabolic groups} that $\hat{p} (\M_X(n,d)^p)$ is contained in $\M_{1,n}$. One can check that the hyperholomorphic sheaf in $\mathbf{Car}(\Ll)$ pulls-back under $\hat{p}$ to that of $\BBBB^p_\Ll$. Note that moreover $\mathbf{Car}(\Ll)=:\M_X(n,d)^p$ for the trivial $\ZZ_n$-Galois cover $p:\bigsqcup X\longrightarrow X$. In that case however Theorem \ref{thm:narasimhan-ramanan:1975} fails.
\end{remark}

\section{Narasimhan--Ramanan dual $\BAA$-branes}\label{section:Heckebranes}

We have seen in Section \ref{sc Mm hyperkahler} that $\M_X(n,d)$ is a hyperk\"ahler variety with K\"ahler structures $((I_1, \omega_1), (I_2, \omega_2), (I_3,\omega_3))$. Following \cite{kapustin&witten}, a \emph{$\BAA$-brane} on $\M_X(n,d)$ is, by definition, a pair $(\Sigma, (W, \nabla_W))$, where:
\begin{itemize}
\item $\Sigma$ is a subvariety of $\M_X(n,d)$, which is  a complex Lagrangian for the holomorphic symplectic form $\Omega_1 = \omega_2 + \i \omega_3$.
\item $(W,\nabla_W)$ is a flat bundle supported on $\Sigma$.
\end{itemize}

The purpose of the present section is to construct a natural collection of complex Lagrangian subvarieties, whose image by the Hitchin fibration $h_{X,n}\to B_{X,n}$ is $B^p_\ni$ (cf.\ Definition~\ref{def: reducedlocus}). Each of these subvarieties depends on the unramified cover $p:C \to X$ and on a holomorphic line bundle $\Jj$ on ${C}$, and will henceforth be denoted by $\Hec^{p,\Jj}_\ni$. We will also see that the Higgs bundles lying in $\Hec^{p,\Jj}_\ni$ can be constructed from Hecke modifications of a Hitchin section of $\M_X(n,d+\delta)^p \to B^p$ constructed out of $\Jj$. Up to the choice of a flat bundle on it, $\Hec^{p,\Jj}_\ni$ is thus the support of a $\BAA$-brane, the \emph{Narasimhan--Ramanan dual $\BAA$-brane}. In Section \ref{sec:FM} will be shown that these branes are the (fiberwise) mirror transform of the Narasimhan--Ramanan $\BBB$-branes. This justifies the notation for these subvarieties, as well as their name. 


\subsection{Construction of the support}\label{subsection:constructioninfamilies}

In this section we construct certain subvarieties that later will be shown to be complex Lagrangian, hence the support of a $\BAA$-brane after the choice of a flat bundle on them.

Donagi and Pantev defined in \cite{DP} the abelianized Hecke correspondence. For the general linear group $\GL(n,\CC)$, this correspondence is given by tensorizing the spectral data by the dual of the ideal of a point on the corresponding spectral curve. In the following lines we generalize this correspondence to $0$-dimensional subschemes of length $\ell$ as it constitutes the first ingredient of our construction.

Recall the family of spectral curves $\Xxx \to B_{X,n}$ and consider its restriction to the subset $B^p_\ni \subset B_{X,n}$ that parametrizes nodal curves. The existence of the Jacobian $\Jac_{B^p_\ni}(\Xxx)$ follows by the seminal work of Grothendieck \cite[Thm. 3.1]{FGA}. The existence of the relative compactified Jacobian $\overline{\Jac}_{B^p_\ni}(\Xxx)$ follows by Altman and Kleiman's work \cite[Theorem 3.1]{altman&kleiman}, being a compactification of $\Jac_{B^p_\ni}(\Xxx)$, inducing the fiberwise compactification of the Jacobian by rank one torsion-free sheaves. We define the tensorization morphism 
\[
\morph{\overline{\Jac}_{B^p_\ni}^{d + \delta}(\Xxx) \times_{B^p_\ni} \Hilb^\ell_{B^p_\ni}(\Xxx)}{\overline{\Jac}_{B^p_\ni}^{d + \delta + \ell}(\Xxx)}{(\Ff,\Ii_Z)}{\Ff \otimes \Ii_Z^\vee.}{}{\tT^\ell_{p}}
\]
Since we are considering nodal curves, hence Cohen--Macaulay, the sheaves $\Ii_Z$ are Cohen--Macaulay too. In that case the dual sheaves $\Ii_Z^\vee$ are Cohen--Macaulay as well, so torsion--free as we are working on curves. Furthermore, one can see that the $\Ii_Z^\vee$ are semistable (hence stable since we our spectral curves are integral). It follows that $\Ff \otimes \Ii_Z^\vee$ is semistable and torsion--free provided that $\Ff$ is, proving that $\tT^\ell_{p}$ is well defined.

\begin{remark} \label{rm preimage of tT}
Recall the Abel--Jacobi map $\alpha_\ell : \Hilb^\ell(X_b) \to \overline{\Jac}^{\, \ell}(X_b)$, $\Ii_Z \mapsto \Ii_Z^\vee$. The preimage $\alpha_\ell^{-1}(\Ff')$ is identified with the open set of $\PP(H^0(X_b, \Ff'))$ of injective sections and the identification is done by considering the short exact sequence $0 \to \Oo_{X_b} \stackrel{s}{\to} \Ff' \to \Oo_Z \to 0$. It then follows that the preimage $(\tT^\ell_p)^{-1}(\Ff'')$ consists on the pairs $(\Ff,\Ii_Z)$ fitting in the short exact sequence
\[
0 \to \Ff \stackrel{s \otimes \id_\Ff}{\to} \Ff'' \to \Oo_Z \to 0.
\]
\end{remark}

Recall the isomorphism \eqref{eq spectral isomorphism} provided by the spectral correspondence. One can define the {\it $p$-adapted length $\ell$ abelianized Hecke morphism} by setting the morphism
\[
\hH^\ell_p := S_{X,n} \circ \tT^\ell_p \circ (S_{X,n}^{-1} \times \id_{\Hilb}),
\]
\[
\hH^\ell_p : \M_X(n,d) \times_{B_{X,n}} \Hilb^\ell_{B^p_\ni}(\Xxx) \longrightarrow \M_X(n,d+\ell) \times_{B_{X,n}} B^p_\ni.
\]

The second ingredient of the construction of the Narasimhan--Ramanan dual branes is a section of the structural morphisms of the relative Hilbert scheme $\Hilb^{\delta-\rho}_{B^p_{\ni}}(\Xxx) \to B^p_\ni$ given by the singularities of the spectral curves. Recall from \eqref{Xb singular} Theorem \ref{thm:normaliz} that for every $b \in B^p_{\ni}$, associated to the spectral curve $X_b = \Xxx|_b$, one has that the singularity divisor $\sing(X_b)$ is reduced and has length $\delta - \rho$. Hence, one can construct the section
\[
\morph{B^p_\ni}{\Hilb^{\delta-\rho}_{B^p_\ni}(\Xxx)}{b}{\sing(X_b).}{}{\sing^{\, p}_\ni}  
\]
We denote the image of this section by
\[
\Sing^{\, p}_\ni := \image \left ( \sing^{\, p}_\ni \right ).
\]
One obviously have a natural isomorphism provided by the structural projection
\begin{equation} \label{eq isomorphism provided by Sing}
\M_X(n,d) \times_{B_{X,n}} \Sing^{\, p}_\ni \cong \M_X(n,d) \times_{B_{X,n}} B^p_\ni.
\end{equation}

The third ingredient of the construction of the Narasimhan--Ramanan dual brane is a Hitchin section on $\M_{C}(r,d + \delta - \rho)$. Such a section will only exists under certain conditions.

\begin{assumption} \label{ass d multiple of r}
From now on, until the end of Section~\ref{sec:FM}, we will be assuming that $d$ is a multiple of $r$, hence $d = r d'$ for some integer $d'$.
\end{assumption}

\begin{remark}
Note that Assumption \ref{ass gamma of maximal order} forces $r = 1$, so it implies Assumption \ref{ass d multiple of r}. 
\end{remark}

As we have anticipated, the reason behind requiring Assumption \ref{ass d multiple of r} is because in this case one can construct a Hitchin section associated to any $\Jj \in \Jac^{\delta/r + d'}({C})$,
\[
\sigma_{C,\Jj} : B_{C,r} \to \M_{C}(r,rd' + \delta - \rho).
\]
Recall \eqref{eq s_F} and \eqref{eq spectral correspondence for varphi} from Section \ref{sec:the Hitchin system}, 
and compose the Hitchin section with the pushforward map $\check{p}$,
\[
\check{p} \circ \sigma_{C,\Jj} : B_{C,r} \to \M_{X}(n,rd' + \delta - \rho)^p.
\]
Thanks to Proposition \ref{prop:commutative-rank-1-hitchin}, $\check{p} \circ \sigma_{C,\Jj}$ factors through a Hitchin section for $\M_X(n,rd' + \delta - \rho)^p$,
\[
\sigma_{p,\Jj} : B^p \to \M_X(n, rd' + \delta -\rho)^p, 
\]
satisfying $\sigma_{p,\Jj} \circ \zeta = \check{p} \circ \sigma_{C,\Jj}$.

As before, we use capital letters to denote their image,
\[
\Sigma_{C,\Jj} := \image \left ( \sigma_{C,\Jj} \right ) \subset \M_{C}(r,rd' + \delta - \rho)
\]
and
\[
\Sigma_{p,\Jj} := \image \left ( \sigma_{p,\Jj} \right ) \subset \M_{X}(n,rd' + \delta - \rho)^p.
\]
Note that for every $\gamma \in \Gamma$ one has that $\sigma_{p,\Jj} = \sigma_{p,\gamma^*\Jj}$, hence
\[
\Sigma_{p,\Jj} = \Sigma_{p,\gamma^*\Jj} 
\]
only depends on the class of $\Jj$ under the action of $\Gamma$ by pull-back.

\begin{definition}\label{def:Heckesubvar}
For each $[\Jj]_\Gamma \in \Jac^{\delta/r + d'}({C})/\Gamma$, define the subvariety $\Hec^{p,\Jj}_\ni$ of $\M_X(n,rd')$, closed in $\M_X(n,rd') \times_{B_{X,n}} B^p_\ni$, as the image under under the isomorphism \eqref{eq isomorphism provided by Sing} of the restriction to $\M_X(n,d) \times_{B^p_\ni} \Sing^{\, p}_\ni$ of the preimage of $\Sigma_{p,\Jj} = \check{p}(\Sigma_{C,\Jj})$ under the $p$-adapted length $(\delta-\rho)$ abelianized Hecke morphism, {\it i.e.}
\[
\Hec^{p,\Jj}_\ni := \left ( \left ( \hH^{(\delta - \rho)}_p \right )^{-1} (\Sigma_{p,\Jj}) \right ) \cap \left ( \M_X(n,rd')^p_\ni \times_{B_{X,n}} \Sing^{\, p}_\ni \right ).
\]
\end{definition}

\begin{remark}
The justification for the notation used for this subvariety will be clear from Theorem \ref{tm duality} below, where we show that $\Hec^{p,\Jj}_\ni$ supports the $\BAA$-brane dual to the Narasimhan--Ramanan $\BBB$-brane.  
\end{remark}

We next turn to the description of the spectral data of $\Hec^{p,\Jj}_\ni$ in terms of Hecke modifications of suitable Higgs bundles. These have been considered in several works, such as in \cite{hitchin:2017,hwang-ramanan:2004,ramanan:2010,wilkin:2016,witten:2015}.  Let $D$ be a reduced effective divisor on $X$, $E\to X$ a vector bundle and $\alpha_y\in E_y^*$ for each point $y$ in the support of $D$. This defines
\[0\to E'\to E\xrightarrow{\alpha} \Oo_D\to 0\]
where $E'$ depends on $D$ and on the projective class of $\alpha$. The bundle $E'$ is said to be a \emph{Hecke modification of $E$} (along $D$ and associated to $\alpha$). If $E$ is equipped with a Higgs field $\psi$, then a \emph{Hecke modification of the Higgs bundle $(E,\psi)$} is a Higgs bundle $(E',\varphi)$ where $E'$ is a Hecke modification of $E$ which is compatible with $\psi$ and $\varphi$, {\it i.e.} such that the restriction of $\psi$ to $E'$ equals $\varphi$.
 
The following theorem gives a description of $\Hec^{p,\Jj}_\ni$ in terms of Hecke modifications of the Higgs bundles parametrized by the section $\sigma_{p,\Jj}$. Note that $\pi_b(\sing(X_b))$ is a reduced effective divisor of length $\delta - \rho$, whenever $b \in B^p_\ni$.

\begin{theorem}\label{pr Hecke}
Let $(E,\varphi)\in\M_X(n,d)$, with $d=rd'$, and let $b\in B^p_\ni$. The following conditions are equivalent:
\begin{enumerate}
\item $(E,\varphi)\in\Hec^{p,\Jj}_\ni\cap h_{X,n}^{-1}(b)$;
\item $(E,\varphi)$ is a Hecke modification of the Higgs bundle 
\[\left (E_{\Jj,b},\psi_{\Jj,b} \right ) := \sigma_{p,\Jj}(b)\in\M_X(n,rd'+\delta-\rho)^p\] along the divisor $\pi_b(\sing(X_b))$ on $X$.
\end{enumerate}
\end{theorem}

\begin{proof}
Choose $a \in B^\ni_{C,r}$ such that $b = \zeta(a)$. By construction, we have $E_{\Jj,b} \cong p_*\eta_{a,*}\eta_a^*\Jj$. Recalling the commuting diagram \eqref{eq normalization X b} (with $\gamma=1$), one has 
\begin{equation} \label{eq description of E_Jj}
E_{\Jj,b} \cong \pi_{b,*}\nu_{a,*}\eta_a^*\Jj.
\end{equation}
As for the Higgs field, by definition (cf. \eqref{eq spectral correspondence for varphi}),
\[\psi_{\Jj, b} \cong p_* \eta_{a,*} \mu_{\eta_a^*\Jj}:E_{\Jj,b}\to E_{\Jj,b}\otimes K_X.\] Then, $\psi_{\Jj, b} \cong \pi_{b,*} \nu_{a,*} \mu_{\eta_a^*\Jj}$ thanks to the commutativity of \eqref{eq normalization X b} (with $\gamma=1$). Recall also that $\mu_{\eta_a^*\Jj}$ is defined by the tensorization under the tautological section $\hat\lambda: C_a \to \eta_a^*K_C$, 
\[
\mu_{\eta_a^*\Jj} :  \eta_a^*\Jj \xrightarrow{\otimes\hat\lambda}  \eta_a^*\Jj \otimes \eta_a^*K_C.
\]

Let $\Ff \to X_b$ be the spectral datum of $(E, \varphi)$. Recalling Remark \ref{rm preimage of tT} the first condition is equivalent to $\Ff$ fitting in the exact sequence 
\begin{equation} \label{eq sequence spectral data}
0\to\Ff\to \nu_{a,\ast} \eta_a^*\Jj \to \Oo_{\sing(X_b)}\to 0,
\end{equation}
for any choice of $a \in B^\ni_{C,r}$ such that $b = \zeta(a)$.

By definition, the second condition above is equivalent to saying that $E$ is given by the short exact sequence
\begin{equation} \label{eq hecke transform}
0 \longrightarrow E \longrightarrow E_{\Jj, b} \longrightarrow \Oo_{\pi_b(\sing(X_b))} \longrightarrow 0,
\end{equation}
and the Higgs field is obtained by the restriction, {\it i.e.}\
\begin{equation} \label{eq hecke higgs field}
\varphi = \psi_{\Jj, b}|_{E}
\end{equation}

We can easily see that the first condition implies the second. Since $\pi_b$ is a finite morphism, pushing forward the exact sequence \eqref{eq sequence spectral data} to $X$ yields the exact sequence \eqref{eq hecke transform} after the identification \eqref{eq description of E_Jj}, hence a Hecke modification of vector bundles.

Recall that $\hat\lambda = q_a^*\lambda$, where $\lambda$ is the tautological section of $\pi_b^*K_X$, and that $K_C\cong p^*K_X$. Since $p\circ\eta_a=\pi_b\circ q_a$ and since $q_a = \nu_a$ by definition, then
\[
\mu_{\eta_a^*\Jj} :  \eta_a^*\Jj \xrightarrow{\otimes\hat\lambda}  \eta_a^*\Jj \otimes \nu_a^*\pi_b^*K_X.
\]
Then, by the projection formula, one has that 
\[
\nu_{a,*} \mu_{\eta_a^*\Jj} = \mu_{(\nu_{a,*}\eta_a^*\Jj)}:\nu_{a,*}\eta_a^*\Jj\xrightarrow{\otimes\lambda}\nu_{a,*}\eta_a^*\Jj\otimes\pi_b^*K_X 
\]
on $X_b$, so
\begin{equation}\label{eq identification of psi}
\psi_{\Jj, b} = \pi_{b,*} \mu_{(\nu_{a,*}\eta_a^*\Jj)}.
\end{equation}
Since tensorization by $\lambda$ restricts to subsheaves and $\Ff$ is a subsheaf of $\nu_{a,*}\eta_a^*\Jj$, it is clear that 
\[
\mu_{\Ff} = \mu_{(\nu_{a,*}\eta_a^*\Jj)}|_{\Ff}.
\]
Taking the pushforward under $\pi_b$ yields $\varphi=\psi_{\Jj,b}|_E$ by \eqref{eq identification of psi}, proving \eqref{eq hecke higgs field}.

For the converse statement, suppose $(E,\varphi)$ is a Hecke modification of $(E_{\Jj,b}, \psi_{\Jj,b})$, so that we have \eqref{eq hecke transform} and \eqref{eq hecke higgs field}. If $\Ff \to X_b$ is again the spectral datum, then \eqref{eq hecke transform} is the same as  
\[
0 \longrightarrow \pi_{b,*}\Ff \longrightarrow \pi_{b,*}\nu_{a,*} \eta_a^*\Jj \longrightarrow \Oo_{\pi_b(\sing(X_b))} \longrightarrow 0.
\]
Recall from \cite{BNR, simpson2} that the $\pi_{b,\ast}\Oo_{X_b}$-module structure on $E$ and on $E_{\Jj,b}$ is precisely given by the corresponding Higgs fields, then \eqref{eq hecke higgs field} implies that this an exact sequence of $\pi_{b,*}\Oo_{X_b}$-modules.

Since $\pi_b$ is a finite morphism, $\pi_{b,*}$ is an exact functor (as the higher direct image sheaves vanish because its fibers are zero dimensional) from the category of $\Oo_{X_b}$-modules to the category of $\pi_{b,*}\Oo_{X_b}$-modules. Moreover, since $\pi_b$ is affine, this is an equivalence of categories (cf.  \cite[ex. 5.17 p.128]{hartshorne:1977}, so the previous sequence holds if and only if we have that \eqref{eq sequence spectral data} holds and hence $(E,\varphi)\in\Hec^{p,\Jj}_\ni\cap h_{X,n}^{-1}(b)$.
\end{proof}

We finish the section showing that $\Hec^{p,\Jj}_\ni$ lies in the smooth locus.

\begin{proposition} \label{prop points Lag are stable}
The subvariety $\Hec^{p,\Jj}_\ni$ is contained in the stable locus of $\M_X(n,rd')$.
\end{proposition}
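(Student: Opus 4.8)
The plan is to deduce stability from the integrality of the spectral curves parametrised by $B^\gamma_\red$, so that no destabilising subobject can exist at all. First I would recall the fibrewise description obtained above: a point of $\Hec^{\gamma,\Jj}_\red$ lying over $b\in B^\gamma_\red$ is a Higgs bundle $(E,\varphi)\cong(\pi_{b,*}L,\pi_{b,*}(\lambda\cdot))$ produced by the spectral correspondence from a line bundle $L$ on the spectral curve $X_b$, where $\pi_b\colon X_b\to X$ is the degree $n$ cover and $\lambda$ the tautological section acting by multiplication. The decisive input is Proposition \ref{prop:normaliz}\eqref{it description of X_b}, which guarantees that $X_b$ is integral for every $b\in B^\gamma_\red$; it is this integrality, uniform over the whole base $B^\gamma_\red$, that I would exploit.

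Next I would invoke the classical stability criterion for integral spectral curves (see \cite{BNR, simpson2}). Since $\pi_b$ is an affine morphism, $\pi_{b,*}$ is an equivalence between $\Oo_{X_b}$-modules and $\pi_{b,*}\Oo_{X_b}$-modules, and under the identification of $E$ with a $\pi_{b,*}\Oo_{X_b}$-module via $\varphi$, the $\varphi$-invariant subsheaves of $E$ correspond exactly to the $\Oo_{X_b}$-submodules of $L$. The heart of the argument is then a rank count: as $X_b$ is integral, $L$ is torsion-free of generic rank $1$, so any nonzero $\Oo_{X_b}$-submodule $M\subseteq L$ is again generically of rank $1$, whence $\pi_{b,*}M$ has rank $n=\rk E$. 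Therefore the saturation in $E$ of any nonzero proper $\varphi$-invariant subsheaf is all of $E$, so $(E,\varphi)$ admits no proper nonzero $\varphi$-invariant subbundle and the slope inequality defining stability is satisfied vacuously.

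Finally, since the conclusion holds pointwise for every $b\in B^\gamma_\red$ and every line bundle $L$ entering the construction \eqref{eq:Heckesubvar}, I would conclude that $\Hec^{\gamma,\Jj}_\red$ lies entirely in the stable locus of $\M_X(n,d)$. I do not expect a serious obstacle here: the content is essentially the Beauville--Narasimhan--Ramanan observation that line bundles on an integral spectral curve yield stable Higgs bundles. The only point requiring care is the subsheaf-versus-subbundle bookkeeping, namely that every invariant subsheaf is forced to have full generic rank $n$ so that its saturation cannot be a proper subbundle; once this is phrased correctly the stability is immediate.
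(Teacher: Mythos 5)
Your proof is correct and follows the same route as the paper, which simply notes that the claim "follows by construction, since we are considering only spectral data given by reduced and irreducible spectral curves." You have merely filled in the standard Beauville--Narasimhan--Ramanan detail (integral spectral curve $\Rightarrow$ no proper $\varphi$-invariant subbundle $\Rightarrow$ vacuous stability) that the paper leaves implicit.
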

\begin{proof}
$\Hec^{p,\Jj}_\ni$ maps to $B^p_\ni$ under the Hitchin map, hence every Higgs bundle in $\Hec^{p,\Jj}_\ni$ corresponds to an irreducible (and reduced) spectral curve, by \eqref{Xb irreducible} of Theorem~\ref{thm:normaliz}. But a destabilizing Higgs subbundle of a strictly polystable Higgs bundle gives rise to a proper component of the corresponding spectral curve, which hence is not irreducible.
\end{proof}




\subsection{Spectral data and parabolic modules} \label{sc spectral dara for Heche}

We shall now provide the spectral description of the subvarieties $\Hec^{p,\Jj}_\ni$. The use of \emph{parabolic modules} (cf. \cite{rego:1980,cook:1993,cook:1998, bhosle:1992}) will be crucial along this section, as they provide a convenient way to relate rank one torsion-free sheaves on a singular curve with line bundles over its normalization. 

Given a nodal curve $X_b$, where $b = \zeta(a)\in B^p_\ni$ for some $a \in B^\ni_{C,r}$, its normalization is $C_a$ with $\nu_a:C_a\to X_b$ being the normalization morphism, as described in \eqref{eq def nu_a}. One can consider the pull-back morphism, 
\begin{equation} \label{eq definition of hat nu}
\morph{\Jac^{\,d+\delta}(X_b)}{\Jac^{d+\delta}({C}_{a})}{L}{\nu_{a}^*L.}{}{\hat{\nu}_{a}}
\end{equation}
Note that $\hat{\nu}_a$ does not extend to $\overline{\Jac}^{\,d+\delta}(X_b)$. One of the motivations to introduce parabolic modules is that their moduli space is a compactification of $\Jac^{\,d+\delta}(X_b)$ (different to $\overline{\Jac}^{\,d+\delta}(X_b)$) where $\hat{\nu}_a$ extends naturally.

Consider
\[
\widetilde{D}_a:=\nu_a^{-1}(\sing(X_b)).
\]
Decompose the singular divisor as $\sing(X_b) = D_b^1 + \dots + D_b^s$, in such a way that each subdivisor $D_b^i$ is supported on the reduced point $x_i \in \sing(X_b)$, with $x_i \neq x_j$. This induces the decomposition $\widetilde{D}_a = \widetilde{D}_a^1 + \dots + \widetilde{D}_a^s$, where each subdivisor is $\widetilde{D}_a^i := \nu_a^{-1}(D_a^i)$.

\begin{definition}\label{def par mod}
A (rank $1$) \emph{parabolic module} over ${C}_a$ associated to $\widetilde{D}_a$, of degree $d+\delta$ and type $\ell = (\ell_1, \dots, \ell_s)$, is a pair $(M,V)$ where $M\in \Jac^{d+\delta}(C_a)$ and $V$ is a vector subspace of $M\otimes\Oo_{\widetilde{D}_a}$ such that: 
\begin{enumerate}
\item $V$ is $\bigoplus_{i = 1}^s V^i$ with $V^i \subset (M \otimes \Oo_{\widetilde{D}^i_a})$;
\item for every $i$, the vector space $V^i$ has dimension $\ell_i>0$;
\item$V^i$ is an $\Oo_{x_i}$-submodule of $M\otimes\Oo_{\widetilde{D}^i_a}$ via pushforward under $\nu_a$ {\it i.e.} via the inclusion $\Oo_{X_b}\hookrightarrow\nu_{a,*}\Oo_{{C}_{ a}}$.
\end{enumerate} 
See \cite{cook:1993,cook:1998} for more details.
\end{definition}

Write $\PMod^{d+\delta}_{\ell}({C}_{ a},\widetilde{D}_a)$ for the moduli space of parabolic modules over ${C}_{ a}$ associated to $\widetilde{D}_a$, of degree $d+\delta$ and type $\ell$.  It is an integral and projective variety (see \cite{cook:1993}).

Recall that we have defined $B_{C,r}^\ni$ as the open subset of $B_{C,r}^{\fs}$ such that the intersection of $C_a$ with any $C_{\gamma(a)}$ as only nodal singularities.

\begin{proposition}
Take $a \in B_{C,r}^\ni$, then $\deg(\widetilde{D}_a)= 2 \deg(\sing (X_b))$ and $\widetilde{D}_a \longrightarrow \sing(X_b)$ is a $2:1$ cover.
\end{proposition}

\begin{proof}
Since $a$ is chosen in $B_{C,r}^\ni$, it follows that $C_{\tilde{b}}$ has only nodal singularities given by the intersection of two irreducible components. Since $q_{\tilde{b}}$ is a connected unramified cover by Proposition \ref{pr cartesian diagram with X_wtb} \eqref{it q is unramified} and the normalization $\nu_a$ is the restriction of $q_{\tilde{b}}$ to the irreducible component $C_a$ of $C_{\tilde{b}}$, one has that $\widetilde{D}_a = \nu_a^{-1}(\sing(X_b))$ can be described as
\[
\widetilde{D}_a = C_a \cap \sing(C_{\tilde{b}})
\]
and so we have that $\widetilde{D}_a$ is the union of the nodal intersections ${C}_{ a} \cap {C}_{ \gamma(a)}$, for all $\gamma \in \Gamma$. Then, it follows from \eqref{eq intersection of components X_gamma,a} that 
\[
\deg(\widetilde{D}_a)= 2nr(m-1)(g-1),
\]
which is twice $\deg(\sing (X_b))$. Consider now a singular point $x$ of $X_b$. If $y \in C_a \cap C_{\gamma(a)}$ maps under $q_{\tilde{b}}$ to $x$ so does $\gamma^{-1}(y) \in C_{\gamma^{-1}(a)} \cap C_a$. Since the action of $\Gamma$ is free in $C$, we conclude that $y \neq \gamma^{-1}(y)$. If there is another $\gamma' \neq \gamma$ in $\Gamma$ such that $(\gamma')^{-1}(y) \in C_a$ then $y \in C_{\gamma'(a)}$ as well, so $y \in C_a \cap C_{\gamma(a)} \cap C_{\gamma'(a)}$ is a triple intersection which it is excluded as $a \in B_{C,r}^\ni$. We conclude that among the nodal singularities of ${C}_{ \tilde{b}}$ mapping to $x$ under $q_{\tilde{b}}$, there are exactly two which lie in ${C}_{a}$, so the projection $\widetilde{D}_a \longrightarrow \sing(X_b)$ is a $2$-cover.
\end{proof}

    For the rest of the section we take $a \in B_{C,r}^\ni$ and $b = \zeta(a) \in B^p_\ni$, so $X_b$ is irreducible with nodal singularities by Theorem~\ref{thm:normaliz}. Observe that in this case $\deg(D^i_a) = 1$, $\deg(\widetilde{D}_a^i)= 2$ and $s = \deg(\sing(X_b))$. Set also $\ell = (1, \dots, 1)$. We are under the assumptions considered in \cite{cook:1998}, so, by Theorem 1 in loc. cit., there is a finite morphism
\begin{equation} \label{eq definition of tau}
\morph{\PMod^{d+\delta}_{\ell}({C}_{ a},\widetilde{D}_a)}{\overline{\Jac}^{\, d+\delta}(X_b),}{(M,V)}{\Ff}{}{\tau}
\end{equation}
where $\Ff$ is defined by the short exact sequence
\begin{equation} \label{eq preimage of tau}
0\to\Ff\to \nu_{a,\ast}M \to \nu_{a,\ast}\big(M\otimes\Oo_{\widetilde{D}_a}/V\big)\to 0,
\end{equation}
and the second map is the composition $\nu_{a,\ast}M\to\nu_{a,\ast}\big(M\otimes\Oo_{\widetilde{D}_a}\big)\to\nu_{a,\ast}\big(M\otimes\Oo_{\widetilde{D}_a}/V\big)$. Note that $\deg(\nu_{a,*}(M \otimes \Oo_{\widetilde{D}^i_a})/V^i) = 1$, so 
\begin{equation} \label{eq sing of X_b from par mods}
\nu_{a,\ast}\big(M\otimes\Oo_{\widetilde{D}_a}/V\big) \cong \Oo_{\sing(X_b)}.
\end{equation}
By definition of $V$, the quotient $M\otimes\Oo_{\widetilde{D}_a}/V$ is an $\Oo_{X_b}$-module, so $\Ff$ inherits an $\Oo_{X_b}$-module structure as well. In addition, $\deg(\nu_{a,\ast}M)=d+\delta + \deg(\sing(X_b))$ and $\deg\big(\nu_{a,\ast}\big(M\otimes\Oo_{\widetilde{D}_a}/V\big)\big)= \deg(\sing(X_b))$, thus indeed $\deg(\Ff)=d+\delta$. 

Let $\tau_0$ denote the restriction of $\tau$ to $\tau^{-1}(\Jac^{d+\delta}(X_b))$. From \cite[Theorem 1]{cook:1998} we know that it is an isomorphism 
\begin{equation}\label{eq:JasobianinParMod}
\tau_0:\tau^{-1}(\Jac^{d+\delta}(X_b))\xrightarrow{\ \cong\ }\Jac^{d+\delta}(X_b),
\end{equation}
so $\Jac^{d+\delta}(X_b)$ can be seen as a dense open subspace of $\PMod^{d+\delta}_{\ell}({C}_{ a},\widetilde{D}_a)$ via $\tau_0^{-1}$. In other words, $ \PMod^{d+\delta}_{\ell}({C}_{ a},\widetilde{D}_a)$ is a compactification of $\Jac^{d+\delta}(X_b)$, which is different from $\overline{\Jac}^{\, d+\delta}(X_b)$.

\begin{lemma} \label{lm tau surjective}
The morphism $\tau:\PMod^{d+\delta}_{\ell}({C}_{ a},\widetilde{D}_a)\to\overline{\Jac}^{\, d+\delta}(X_b)$ is surjective.
\end{lemma}
\begin{proof}
It follows from \cite{AIK} that $\overline{\Jac}^{\, d+\delta}(X_b)$ is irreducible. The restriction $\tau_0$ is surjective, hence the lemma is an immediate consequence of the compactness of $\PMod^{d+\delta}_{\ell}({C}_{ a},\widetilde{D}_a)$ and irreducibility of $\overline{\Jac}^{\, d+\delta}(X_b)$.
\end{proof}

Consider the projection onto the first factor,
\begin{equation} \label{eq definition of dotnu}
\morph{\PMod^{d+\delta}_{\ell}({C}_{ a},\widetilde{D}_a)}{\Jac^{d+\delta}({C}_{ a})}{(M,V)}{M.}{}{\dot{\nu}_{a}}
\end{equation}
This is a fiber bundle, with projective fibers given by products of  closed subschemes of Grassmanians \cite{cook:1993}. The restriction of this morphism to $\Jac^{d+\delta}(X_b)\subset \PMod^{d+\delta}_{\ell}({C}_{ a},\widetilde{D}_a)$  coincides with $\hat\nu_a$ from \eqref{eq definition of hat nu}, {\it i.e}. the diagram
\begin{equation} \label{eq hatnu and dotnu commute}
\xymatrix{
\Jac^{d+\delta}(X_b) \ar@{^(->}[d]_-{\tau_0^{-1}} \ar[rd]^-{\hat\nu_a}\\
\PMod^{d+\delta}_{\ell}({C}_{ a},\widetilde{D}_a) \ar[r]^-{\dot\nu_a} & \Jac^{d+\delta}({C}_{ a}) 
}
\end{equation}
commutes.  
So $\PMod^{d+\delta}_{\ell}({C}_{ a},\widetilde{D}_a)$ is a compactification of $\Jac^{d+\delta}(X_b)$ to which the pullback map $\hat\nu_a$ extends, in contrast to what happens with $\overline{\Jac}^{\, d+\delta}(X_b)$. The next lemma relates the closure of the fiber of $\hat\nu_a$ with the fiber of $\dot\nu_a$.

\begin{lemma}\label{lemma:fiberHecke}
Let $M\in\Jac^{d+\delta}({C}_{ a})$. Then 
\begin{equation} \label{eq tau nudot -1 is the closure}
\overline{\hat\nu_a^{-1}(M)}=\tau(\dot{\nu}_{a}^{-1}(M)),
\end{equation}
and it classifies those $\Ff \in \overline{\Jac}^{\, d+\delta}(X_b)$ fitting in the short exact sequence
\begin{equation} \label{eq description of tau of the preimage of nu dot}
0\to\Ff\to \nu_{a,\ast}M \to \Oo_{\sing(X_b)}\to 0.
\end{equation}
Furthermore, the restriction of $\tau$ to $\dot{\nu}_{a}^{-1}(M)$ is a closed embedding.
\end{lemma}

\begin{proof}
First of all, the image under $\tau$ of the fiber of $\dot{\nu}_a$ over $M$ is given by $\Ff$ fitting in \eqref{eq preimage of tau}. Then, it follows from \eqref{eq sing of X_b from par mods} that $\tau(\dot{\nu}_{a}^{-1}(M))$ is given by those $\Ff$ fitting in \eqref{eq description of tau of the preimage of nu dot}.

We now address \eqref{eq tau nudot -1 is the closure}. The commutative diagram \eqref{eq hatnu and dotnu commute} can be completed as 
\[\xymatrix{ \hat\nu_a^{-1}(M)\ar@{^(->}[r]\ar@{^(->}[d]_{\tau_0^{-1}}&\Jac^{d+\delta}(X_b).\ar@{^(->}[d]_{\tau_0^{-1}}\ar[dr]^-{\hat\nu_a}& \\
\dot{\nu}_{a}^{-1}(M)\ar@{^(->}[r]& \PMod^{d+\delta}_{\ell}({C}_{ a},\widetilde{D}_a)\ar[d]_-{\tau}\ar[r]^-{\dot{\nu}_{a}}&\Jac^{d+\delta}({C}_{ a})\\
\overline{\hat\nu_a^{-1}(M)}\ar@{^(->}[r]&\overline{\Jac}^{\, d+\delta}(X_b)&
}\]
Of course, $\tau\tau_0^{-1}$ is the identity on $\Jac^{d+\delta}(X_b)$. 
Now, since $\tau$ is a closed morphism, $\tau(\dot{\nu}_{a}^{-1}(M))$ is closed in $\overline{\Jac}^{\, d+\delta}(X_b)$; clearly, it contains $\tau(\tau_0^{-1}(\hat\nu_a^{-1}(M)))=\hat\nu_a^{-1}(M)$, so $\ol{\hat\nu_a^{-1}(M)}\subset\tau(\dot{\nu}_{a}^{-1}(M))$.

 Conversely, given any parabolic module $(M,V)\in\dot{\nu}_{a}^{-1}(M)$, we know that there is a sequence of elements $(p_j)_j$ in $\Jac^{d+\delta}(X_b)$ such that its image under $\tau_0^{-1}$ converges to $(M,V)$. We can find such a sequence in $\tau_0^{-1}(\hat\nu_a^{-1}(M))$ as follows (so that $\PMod^{d+\delta}_{\ell}({C}_{ a},\widetilde{D}_a)$ is actually fiberwise compactification of $\Jac^{d+\delta}(X_b)$). The pullback map $\hat\nu_b:\Jac^{d+\delta}(X_b)\to\Jac^{d+\delta}(C_a)$ is a locally trivial fibration, with the fiber $F$ being isomorphic to products of powers of $\CC^*$ and of $\CC$. Trivialize this fibration on an open set $U$ around $M\in \Jac^{d+\delta}(C_a)$, hence becoming a product $U\times F$. On this product, project the sequence $(p_j)_j$ onto a sequence $(q_j)_j$ on $\{M\}\times F$ ({\it i.e.}  via the map $U\times F\to \{M\}\times F$, $(L,z)\mapsto (M,z)$). So $(q_j)_j$ is a sequence on $\hat\nu_b^{-1}(M)$ and, since \eqref{eq hatnu and dotnu commute} commutes, $(\tau_0^{-1}(q_j))_j$ is a sequence in $\dot{\nu}_{a}^{-1}(M)$, which  converges to the same point as $(\tau_0^{-1}(p_j))_j$, namely $(M,V)$. Now, taking the image under the closed morphism $\tau$, we see that that $\tau(M,V)=\lim q_j$, thus $\tau(M,V)\in \overline{\hat\nu_a^{-1}(M)}$, and therefore $\ol{\hat\nu_a^{-1}(M)}\supset\tau(\dot{\nu}_{a}^{-1}(M))$.
 
It remains to prove the last claim {\it i.e.} to prove that $\tau$ restricted to $\dot{\nu}_{a}^{-1}(M)$ is injective. Take $(M,V)$ and $(M,V')$ in $\dot{\nu}_{a}^{-1}(M)$, both mapping to $\Ff$ under $\tau$. It follows that both $V$ and $V'$ must be isomorphic to $\Ff\otimes\Oo_{\sing(X_b)}$, hence $(M,V)\cong (M,V')$.
\end{proof}

\begin{remark}\label{rmk closures intersect}
The previous lemma shows that, for a fixed $M$, we can identify $\overline{\hat\nu_a^{-1}(M)}$ and $\dot{\nu}_{a}^{-1}(M)$ via $\tau$. However, the map $\tau$ is not generally injective, as there are $M\ncong M'$ such that $\tau(\dot\nu_a^{-1}(M))\cap \tau(\dot\nu_a^{-1}(M'))$ is non-empty, that is, the \emph{closures} of $\hat\nu_a^{-1}(M)$ and of $\hat\nu_a^{-1}(M')$ will intersect; cf. \cite[Example 5.4]{gothen-oliveira:2013} for an example of this phenomenon for $A$-type singularities.
\end{remark}

We finish this section describing the restriction of the Hitchin map to $\Hec^{p,\Jj}_\ni$.

\begin{theorem}\label{tm:spectdataHec}
Let $b \in B^p_\ni$ and $a \in B^\ni_{C,r}$ such that $b = \zeta(a)$, then 
\begin{equation} \label{eq description of h_b restricted to Hek}
h_{X,n}^{-1}(b) \cap \Hec^{p,\Jj}_\ni = \bigcup_{\gamma^*\Jj \in \Gamma(\Jj)} S_{X,n} \left( \overline{\hat{\nu}_a^{-1}(\eta_a^* (\gamma^*\Jj))  }  \right ),
\end{equation}
where $\Gamma(\Jj)$ denotes the $\Gamma$-orbit of $\Jj$. Furthermore, 
\begin{equation} \label{eq dim Hitchin fiber cap Hecke}
\dim \left ( h_{X,n}^{-1}(b) \cap \Hec^{p,\Jj}_\ni \right ) = \delta - \rho = n(n-r)(g-1).
\end{equation}
\end{theorem}

\begin{proof}
As we have seen in the proof of Theorem \ref{pr Hecke}, the points $h_{X,n}^{-1}(b) \cap \Hec^{p,\Jj}_\ni$ are those Higgs bundles whose spectral data $\Ff$ fit in the exact sequence \eqref{eq sequence spectral data} for any choice of $a \in B^\ni_{C,r}$ such that $b = \zeta(a)$. Observing that $\eta_{\gamma(a)}^*\Jj = \eta_{a}^* (\gamma^* \Jj)$, if we fix $a \in B^\ni_{C,r}$, we can understand $h_{X,n}^{-1}(b) \cap \Hec^{p,\Jj}_\ni$ as the set of $\Ff$ fitting in
\[
0\to\Ff\to \nu_{a,\ast} \eta_a^*\gamma^* \Jj \to \Oo_{\sing(X_b)}\to 0,
\]
for every $\gamma \in \Gamma(\Jj)$. Then, the first statement follows from Lemma \ref{lemma:fiberHecke}. It then follows that
\[
\dim \left ( h_{X,n}^{-1}(b) \cap \Hec^{p,\Jj}_\fs \right ) = \dim \left ( \overline{\hat{\nu}_a^{-1}(\eta_{a}^*\Jj)} \right ) = \dim \left ( \hat{\nu}_{a}^{-1}(\eta_{a}^*\Jj) \right ).
\]

We can find in \cite{EGA} a description of the fibers of $\hat\nu_a$, being a torsor for $H^0(X_b, \Oo^*_{\sing(X_b)})$. Since,
\[
\dim \left ( H^0(X_b, \Oo_{\sing(X_b)}) \right ) = \deg(\sing(X_b)),
\]
the second statement follows from \eqref{Xb singular} of Theorem \ref{thm:normaliz}.
\end{proof}

\begin{remark}\label{rmk:Jdescends}
In particular, if $\Jj$ is pulled back from a line bundle over $X$ then $\Gamma(\Jj)=\{ \Jj \}$, thus \eqref{eq description of h_b restricted to Hek} just becomes $S_{X,n} \left ( \overline{\hat{\nu}_{a}^{-1}(\Jj)}\right )$. Notice that this condition on $\Jj\in\Jac^{\delta/r + d'}({C})$ is only possible if $d'$ is a multiple of $m$. 
\end{remark}

\subsection{A complex Lagrangian subvariety}\label{subsec:Lagrangian}

Keeping $\Jj\in\Jac^{\delta/r+d'}({C})$ fixed, we now study some properties of $\Hec^{p,\Jj}_\ni$. Particularly relevant is the proof that $\Hec^{p,\Jj}_\ni$ is a complex Lagrangian subvariety of $\M_X(n,rd')$.

Recall that $\B$ denotes the Borel subgroup of $\GL(n,\CC)$ and $\U$ its unipotent subgroup. Fix a square root $K_C^{1/2}$ of $K_C$. We now study the relation of our subvariety $\Hec^{p,\Jj}_\ni$ with the \emph{unipotent locus} in $\M_{{C}}(n,nd')$, as defined in \cite[Section 4]{Borel} out of a line bundle $\Ll \to C$ of degree $d'$, \begin{equation}\label{eq Uni}
\Uni_{{C}}\left (\Ll \right)=\left\{
(E,\varphi) \in \M_C(n,nd') \ 
\left|\ 
\begin{array}{l}
\exists \, \sigma\in H^0(X,E/\B):\\ 
\varphi\in H^0(X,E_\sigma(\mathfrak{b})\otimes K_X);\\
E_\sigma / \U \cong \bigoplus_{i = 1}^{n} \Ll K_C^{(n+1-2i)/2}.
\end{array}
\right.
\right\},
\end{equation}
        The relevant line bundles for us are those of the orbit $\Gamma(\Jj) = \{ \gamma^*\Jj \textnormal{ for } \gamma \in \Gamma \}$, and for each of them the corresponding unipotent locus will be denoted by $\Uni_C(\gamma^{\ast}\Jj)\subset\M_{C}(n,nd')$. Recall the Cartan locus $\M_{(1,n)}$ of $\M_{{C}}(n,nd')$, introduced before Corollary~\ref{cor:pullbacktoCar}, and let $V$ be $h_{{C},n}(\M_{(1,n)})$. Then $h_{{C},n}(\Uni(\gamma^{i,\ast}\Jj))=V$, for every $i$ (cf.~\cite{Borel}). 
        
        Recall the pullback map $\hat{p}:\M_X(n,rd')\to\M_{{C}}(n,nd')$ from \eqref{eqpullback}.

\begin{proposition}\label{pr hat p}
Consider the open subvariety $\Hec^{p,\Jj}_{\Jac}$ of $\Hec^{p,\Jj}_\ni$ defined as the intersection of $\Hec^{p,\Jj}_\ni$ with the open subset $\Jac^{d + \delta}(X_b)$ of every Hitchin fiber $h_{X,n}^{-1}(b)$. Then $\Hec^{p,\Jj}_{\Jac}$ is mapped under the pullback map \eqref{eqpullback} to the union of $\Uni_C(\gamma^{\ast}\Jj)$, for $\gamma \in \Gamma$, {\it i.e.}\
$$
\hat{p}\left(\Hec^{p,\Jj}_{\Jac} \right )\subset \bigsqcup_{\gamma^{*}\Jj \in \Gamma (\Jj)} \Uni_C(\, \gamma^{*}\Jj).
$$ 
\end{proposition}
\begin{proof}
By the spectral correspondence, the proposition can be proved by showing that the pullback of the spectral data of any Higgs bundle in $\Hec^{p,\Jj}_{\Jac}$ gives the spectral data of a Higgs bundle in $\bigsqcup_{\gamma^{*}\Jj \in \Gamma (\Jj)} \Uni_C(\, \gamma^{i,*}\Jj)$. 

Take $b\in B^p_\ni\subset B^p$ and let $\tilde{b}={p}^*b$. Then $C_{\tilde{b}}$ is integral and has nodal singularities. Furthermore, by Corollary~\ref{cor:pullbacktoCar}, $\tilde{b}\in V$. 
So, if $(E,\varphi)$ represents a point in $\Hec^{p,\Jj}_{\Jac}$ mapping to $b$, then $\hat{p}(E,\varphi)$ maps to $\tilde{b}\in V$. By Theorem \ref{tm:spectdataHec}, the spectral datum of $(E,\varphi)$ is given by a line bundle $L\in\Jac^{d+\delta}(X_b)$ such that $\nu^*_a L\cong \gamma^{*}\Jj$ for some $\gamma \in \Gamma$.
Using the commutative diagram of 
Theorem~\ref{thm:normaliz}~(\ref{it Cartesian in X_gamma}), we see that $\tilde\nu_{\tilde{b}}^*q_{\tilde{b}}^*L\cong\tilde q^*_a\gamma^{*}\Jj$ is the exterior product of all the elements in the orbit $\Gamma(\Jj)$. Recall that $C_{\tilde{b}}$ is a disconnected curve, whose connected components are all isomorphic to ${C}$. We see that the spectral datum of $\hat{p}((E,\varphi))$ is a line bundle on $C_{\tilde{b}}$ (namely $q_{\tilde{b}}^*L$) which pulls back under the normalization map $\tilde\nu_{\tilde{b}}:\widetilde{C}_{\tilde{b}} \to C_{\tilde{b}}$ to the exterior product of all the elements in the orbit $\Gamma(\Jj)$. By Proposition 4.5 (4.14) of \cite{Borel}, this is the spectral datum of an element of $\Uni_C(\, \gamma^{*}\Jj)$, proving that $\hat{p}(E,\varphi)\in \Uni_C(\, \gamma^{*}\Jj)$.
\end{proof}

Given a Higgs bundle $(E,\varphi)$, the associated \emph{deformation complex} is defined by
\begin{equation}\label{eq:defcomplex}
C^\bullet_{(E,\varphi)} : \End(E) \xrightarrow{[-,\varphi]} \End(E) \otimes K.
\end{equation}
Its hypercohomology $\HH^*(C^\bullet_{(E,\varphi)})$ fits in the long exact sequence
\begin{equation}\label{eq:leshypercohom}
\begin{split}
0 \longrightarrow \HH^0(C^\bullet_{(E,\varphi)})  &\longrightarrow H^0(X,\End(E))  \longrightarrow H^0(X,\End(E) \otimes K_X)  \longrightarrow \HH^1(C^\bullet_{(E,\varphi)}) \longrightarrow
\\
& \longrightarrow H^1(X,\End(E)) \longrightarrow H^1(X,\End(E) \otimes K_X)  \longrightarrow \HH^2(C^\bullet_{(E,\varphi)})  \longrightarrow 0.
\end{split}
\end{equation}
If $(E,\varphi)$ is a stable Higgs bundle, then it represents a smooth
point of the moduli space $\M_X(n,d)$, with tangent space isomorphic
to $\HH^1(C^\bullet_{(E,\varphi)})$.


Recall from Section \ref{sc Mm hyperkahler} the holomorphic symplectic form $\Omega_{X,1} = \omega_2 + \i \omega_3$ on $\M_X(n,rd')$ associated to the complex structure $I_1$. We can now prove the main result of this section.

\begin{theorem} \label{tm lagrangian}
$\Hec^{p,\Jj}_\ni$ is a Lagrangian subvariety of $\M_X(n,rd')$ with respect to $\Omega_{X,1}$.
\end{theorem}
\begin{proof}
First we prove that $\Hec^{p,\Jj}_\ni$ is isotropic. Recall the open subset $\Hec^{p,\Jj}_{\Jac} \subset \Hec^{p,\Jj}_\ni$ defined in the previous proposition. It is enough to show that the symplectic form $\Omega_{X,1}$ vanishes on $\Hec^{p,\Jj}_{\Jac}$. 

Let $(E,\varphi)\in\Hec^{p,\Jj}_{\Jac}$. It is a smooth point of $\M_{X}(n, d)$, by Proposition~\ref{prop points Lag are stable}.
Consider the polystable Higgs bundle $(\tilde{E}, \tilde\varphi):=\hat{p}(E, \varphi)=({p}^*E, {p}^*\varphi)$ over ${C}$. By Proposition~\ref{pr hat p}, $(\tilde{E}, \tilde\varphi)\in\Uni_C(\gamma^{\ast}\Jj)\subset\M_{{C}}(n,nd')$ for some $\gamma^*\Jj\in \Gamma(\Jj)$. In addition, by \cite[Proposition 4.5]{Borel}, $(\tilde{E}, \tilde\varphi)$ is stable, thus also represents a smooth point of $\M_{{C}}(n,nd')$.

As $(E,\varphi)$ represents a smooth point of the moduli space, the corresponding tangent space is isomorphic to $\HH^1(C^\bullet_{(E,\varphi)})$, where $C^\bullet_{(E,\varphi)}$ is the complex given by \eqref{eq:defcomplex}. Since $\HH^1(C^\bullet_{(E,\varphi)})$ fits in \eqref{eq:leshypercohom}, any tangent vector in $T_{(E,\varphi)}\M_X(n,rd')\cong \HH^1(C^\bullet_{(E,\varphi)})$ is determined by an element in $H^0(X,\End(E)\otimes K_X)\cong H^{1,0}(X,\End(E))$ (providing the deformation of the Higgs field) and by an element in $H^1(X,\End(E))\cong H^{0,1}(X,\End(E))$ (providing the deformation of the holomorphic structure of $E$). Thus, every  $v,w\in T_{(E,\varphi)}\M_X(n,rd')$, may be represented as $v=(\alpha_1,\alpha_2)$ and $w=(\beta_1,\beta_2)$, with $\alpha_1,\beta_1\in \Omega^{1,0}(X,\End(E))$ and $\alpha_2,\beta_2\in\Omega^{0,1}(X,\End(E))$.
Then \cite{hitchin-self}, 
\begin{equation}\label{eq:omegax}
\Omega_{X,1}(v,w)=\int_X\tr(\alpha_1\wedge\beta_2-\alpha_2\wedge\beta_1)\in \CC.
\end{equation}
Pick the holomorphic symplectic form $\Omega_{C,1}$ on $\M_{{C}}(n,nd')$. Analogous statements hold for any pair of tangent vectors $\tilde{v},\tilde{w}\in T_{(\tilde{E},\tilde\varphi)}\M_{{C}}(n,nd')\cong \HH^1(C^\bullet_{(\wt{E}, \wt{\varphi})})$, where $C^\bullet_{(\wt{E},\wt{\varphi})}$ is the deformation complex of $(\tilde{E},\tilde\varphi)$, defined in \eqref{eq:defcomplex}. Thus, \begin{equation}\label{eq:omegaxgamma}
\Omega_{C,1}(\tilde{v},\tilde{w})=\int_{{C}}(\tilde{\alpha}_1\wedge\tilde{\beta}_2-\tilde{\alpha}_2\wedge\tilde{\beta}_1)\in \CC.
\end{equation}

If $\tilde{v}=d\hat{p}(v)$ and $\tilde{w}=d\hat{p}(w)$, then 
$\tilde{\alpha}_i={p}^*\alpha_i$ and $\tilde{\beta}_i={p}^*\beta_i$, for $i=1,2$, hence \eqref{eq:omegax} and \eqref{eq:omegaxgamma} imply that 
\begin{equation}\label{eq:nomegaX=omegXgamma}
{\hat{p}}^*\,\Omega_{C,1}=d\hat{p}^{t}\,\Omega_{C,1} = m\Omega_{X,1}
\end{equation}
because ${p}:{C}\to X$ is a degree $m$ map.

Assume now that $v,w$ are tangent vectors to $\Hec^{p,\Jj}_{\Jac}$.
By Proposition~\ref{pr hat p}, we have that $d\hat{p}(v)$ and $d\hat{p}(w)$ are tangent vectors to the unipotent locus of $\M_{{C}}(n,nd')$, which is isotropic, by Proposition 4.2 of \cite{Borel}. Hence $\Omega_{C,1}(d\hat{p}(v),d\hat{p}(w))=0$, so $\Omega_{X,1}(v,w)=0$ by \eqref{eq:nomegaX=omegXgamma}, proving that $\Hec^{p,\Jj}_{\Jac}$, thus $\Hec^{p,\Jj}_\ni$, is isotropic.

It remains to prove that $\Hec^{p,\Jj}_\ni$ is a mid-dimensional subvariety of $\M_X(n,rd')$, that is, $\dim(\Hec^{p,\Jj}_\ni)=n^2(g-1)+1$, after \eqref{eq:dim}. Since $\Hec^{p,\Jj}_\ni$ lies in the smooth locus of $\M_X(n,rd')$ after Proposition \ref{prop points Lag are stable}, its dimension can be computed through the Hitchin map restricted to $\Hec^{p,\Jj}_\ni$, namely by adding the dimension of $B^p_\ni$ to the dimension of any fiber $h_{X,n}^{-1}(b) \cap \Hec^{p,\Jj}_\ni$.

From \eqref{eq dim B^gamma} we have that $\dim B^p_{\ni} = \dim B^p = rn(g-1) + 1$.
On the other hand, by \eqref{eq dim Hitchin fiber cap Hecke}, one has that $\dim (h_{X,n}^{-1}(b) \cap \Hec^{p,\Jj}_\ni) = n (n-r)(g-1)$. Then, their sum equals 
\[
\dim B^p + \dim (h_{X,n}^{-1}(b) \cap \Hec^{p,\Jj}_\ni) = rn(g-1) + 1 + n (n-r)(g-1).
\]
Adding up yields $\dim(\Hec^{p,\Jj}_\ni)=n^2(g-1)+1$ as claimed.
\end{proof}

\section{Mirror symmetry and branes}\label{sec:FM}

In two preceding sections we constructed the Narasimhan--Ramanan $\BBB$-branes $\BBBB^p_{\Ff}$ and $\BBBB^p_\Ll$ (cf.\ Definitions~\ref{def:nonbasicNR-BBBbrane} and \ref{def:basicNR-BBBbrane}) and the complex Lagrangian subvarieties $\Hec^{p,\Jj}_\ni$ (cf.\ Definition~\ref{def:Heckesubvar}). Recall that the Definitions~\ref{def:nonbasicNR-BBBbrane} and \ref{def:basicNR-BBBbrane} required Assumption \ref{ass gamma of maximal order}, thus $r = 1$. Note as well that, by construction, both branes $\BBBB^p_\Ll$ and $\BBBB^p_{\Ff}$ determine coherent sheaves on $\M_X(n,d)$ (with respect to the complex structure $I_1$), and their support fiber (via the Hitchin map) over the locus $B^p$. On the other hand $\Hec^{p,\Jj}_\ni$ is only constructed over the open dense subspace $B^p_\ni \subset B^p$ parametrizing integral and nodal spectral curves there. 

Mirror symmetry predicts that a $\BBB$-brane ought to be dual to a $\BAA$-brane. It is expected that such duality is realized via a Fourier--Mukai transform relative to a Lagrangian fibration associated to the complex structure $I_1$. In this section we prove that (the restriction to $B^p_\ni$ of) the coherent sheaf on $\M_X(n,d)$ given by the Narasimhan--Ramanan $\BBB$-brane $\BBBB^p_\Ll$ is Fourier--Mukai transformed into a sheaf supported over $\Hec^{p,\Jj}_\ni$, for an explicit choice of $\Jj$ depending on $\Ll$. We obtain an explicit relation on this transformed sheaf which is enough to find its support, but we do not have a full description of it. The only missing piece to produce the complete (fiberwise) mirror symmetry is a global description of the corresponding flat bundle over $\Hec^{p,\Jj}_\ni$; we have partial information on it, but not a global one. Starting with $\BBBB^p_{\Ff}$ instead, we obtain a similar duality statement.

We address only the case of degree $d=0$ (thus the case of $d$ multiple of $n$ also follows), because in this case we have the Hitchin sections \cite{hitchin_lie} as global Lagrangian sections of the Hitchin fibration $h_{X,n} : \M_X(n,0) \to B_{X,n}$. This allow us to perform the fiberwise Fourier--Mukai transform without using a gerbe (or using a trivial one).
For $d$ non-multiple of $n$, then $h_{X,n}$ has no such global Lagrangian section, hence a gerbe is required to properly perform the relative Fourier--Mukai (cf.~\cite{HT}). On the other hand, we expect that all the analysis in the preceding sections generalizes to the setting of parabolic Higgs bundles, and there, under mild assumptions, the Fourier--Mukai duality can be performed for any degree $d$ without the need for a gerbe. This is because, for an appropriate choice of parabolic weights, the Hitchin fibration always admits a Hitchin section (cf.~\cite{gothen-oliveira:2017}).



\subsection{Review of autoduality of compactified Jacobians of integral curves}
\label{sc FM}
In this section we review autoduality of compactified Jacobians of integral curves with planar singularities and the associated Fourier--Mukai transform given by Arinkin in \cite{arinkin}. Since spectral curves are contained in the surface $|K_X|$, his construction applies to any integral spectral curve $X_b$, in particular to all curves for $b\in B^p_\ni$ by Theorem~\ref{thm:normaliz}  \eqref{Xb irreducible}. In this context, Arinkin's autoduality statement becomes becomes the autoduality of the corresponding Hitchin fibers $h_{X,n}^{-1}(b)$. 

Take an integral curve with planar singularities $X_b$ and consider an integer $\delta$. Then every semistable rank $1$ torsion-free sheaf on $X_b$ is indeed stable, and $\ol{\Jac}^{\, \delta}(X_b)$ is therefore a fine moduli space with universal family $\Uu_b \to X_b \times \ol{\Jac}^{\, \delta}(X_b)$. Denote by $\Uu^0_b$ its restriction to $X_b \times \Jac^{\, \delta}(X_b)$. Before constructing the Poincar\'e sheaf, we first construct the Poincar\'e bundle using $\Uu_b$ and $\Uu^0_b$. Choose a point $\sigma_X(b)$ in $\ol{\Jac}^{\, \delta}(X_b)$ and let $\Uu_b$ and $\Uu^0_b$ be normalized with respect to $\sigma_X(b)$.

Given a flat morphism $f : Y \to S$ whose geometric fibers are curves, we can define the determinant of cohomology (see \cite{knudsen&mumford} and \cite[Section 6.1]{esteves}) as follows. If $\Ee$ is an $S$-flat sheaf on $Y$, the determinant of cohomology $\Dd_f(\Ee)$ is an invertible sheaf on $S$, constructed locally as the determinant of complexes of free sheaves, which is locally quasi-isomorphic to $Rf_*\Ee$. Consider the triple product $X_b \times \ol{\Jac}^{\, \delta}(X_b) \times \Jac^{\, \delta}(X_b)$ and the projection $f_{23} : X_b \times \ol{\Jac}^{\, \delta}(X_b) \times \Jac^{\, \delta}(X_b) \to \ol{\Jac}^{\, \delta}(X_b) \times \Jac^{\, \delta}(X_b)$, which is flat and whose fibers are curves. Consider as well the corresponding projections $f_{12}$ and $f_{13}$. The Poincar\'e line bundle $\Pp_b \to \ol{\Jac}^{\, \delta}(X_b) \times \Jac^{\, \delta}(X_b)$ is the invertible sheaf
\begin{equation}\label{eq:Poincarebundle}
\Pp_b:= \Dd_{f_{23}} \left ( f_{12}^*\Uu_b \otimes f_{13}^* \Uu^0_b  \right )^{-1} \otimes \Dd_{f_{23}} \left ( f_{13}^* \Uu^0_b  \right ) \otimes \Dd_{f_{23}} \left ( f_{12}^* \Uu_b  \right ).
\end{equation}

The restriction of the Poincar\'e bundle $\Pp_b$ to the point associated to $M \in \Jac^{\, \delta}(X_b)$, that is,  $\Pp_{b,M} := \Pp_b|_{\ol{\Jac}^{\, \delta}(X_b) \times \{ M \}}$, is the line bundle over $\ol{\Jac}^{\, \delta}(X_b)$  given by
\begin{equation} \label{eq description of Pp_M}
\Pp_{b,M} = \Dd_{f_2} (\Uu_b \otimes f_1^*M)^{-1} \otimes \Dd_{f_2}(f_1^*M) \otimes \Dd_{f_2}(\Uu_b),
\end{equation}
where we have considered the projections $f_1 : X_b \times \ol{\Jac}^{\, \delta}(X_b) \to X_b$ and $f_2 : X_b \times \ol{\Jac}^{\, \delta}(X_b) \to \ol{\Jac}^{\, \delta}(X_b)$.

Our Poincar\'e bundle is constructed over $\ol{\Jac}^{\, \delta}(X_b) \times \Jac^{\, \delta}(X_b)$. A similar construction can be performed over $\Jac^{\, \delta}(X_b) \times \ol{\Jac}^{\, \delta}(X_b)$, which coincides with $\Pp_b$ after restricting both to $\Jac^{\, \delta}(X_b) \times \Jac^{\, \delta}(X_b)$. Gluing both line bundles over $\Jac^{\, \delta}(X_b) \times \Jac^{\, \delta}(X_b)$, one can define the line bundle 
\begin{equation}\label{eq:P_bsharp}
\Pp_b^\sharp \to \left ( \ol{\Jac}^{\, \delta}(X_b) \times \ol{\Jac}^{\, \delta}(X_b) \right )^\sharp,
\end{equation} where
\begin{equation}\label{eq:JactimesJaccard}
\left (\ol{\Jac}^{\, \delta}(X_b) \times \ol{\Jac}^{\, \delta}(X_b) \right)^\sharp := \left( \Jac^{\, \delta}(X_b) \times \ol{\Jac}^{\, \delta}(X_b) \right) \cup \left( \ol{\Jac}^{\, \delta}(X_b) \times \Jac^{\, \delta}(X_b) \right).
\end{equation}
Arinkin \cite{arinkin} extended this construction to the compactified Jacobian, obtaining the {\it Poincar\'e sheaf}
\[
\ol{\Pp}_b \to \ol{\Jac}^{\, \delta}(X_b) \times \ol{\Jac}^{\, \delta}(X_b),
\]
showing as well that it is a Cohen-Macaulay sheaf. Therefore, considering the injection
\begin{equation}\label{eq:inj-j}
j : \left (\ol{\Jac}^{\, \delta}(X_b) \times \ol{\Jac}^{\, \delta}(X_b) \right)^\sharp \hookrightarrow \ol{\Jac}^{\, \delta}(X_b) \times \ol{\Jac}^{\, \delta}(X_b), 
\end{equation}
one has that the Poincar\'e sheaf satisfies \cite[Lemma 6.1 (2)]{arinkin}
\begin{equation}\label{eq:Poincarsheaf}
\ol{\Pp}_b \cong j_* \Pp_b^\sharp.
\end{equation}

Taking the projections $\pi_1$, $\pi_2$ onto the first and second factors
\begin{equation}\label{eq:projections-compact-Jac}
\xymatrix{
&  \ol{\Jac}^{\, \delta}(X_b) \times \ol{\Jac}^{\, \delta}(X_b)  \ar[ld]_{\pi_1} \ar[rd]^{\pi_2} &
\\
\ol{\Jac}^{\, \delta}(X_b) & & \ol{\Jac}^{\, \delta}(X_b),
}
\end{equation}
and using $\ol{\Pp}_b$ as a kernel, we consider the Fourier--Mukai functor on the bounded derived category of coherent sheaves on $\ol{\Jac}^{\, \delta}(X_b)$,
\begin{equation} \label{eq FM}
\morph{D^b \left ( \ol{\Jac}^{\, \delta}(X_b) \right )}{D^b \left ( \ol{\Jac}^{\, \delta}(X_b) \right )}{\Ff^\bullet}{R \pi_{2,*}(\pi_1^*\Ff^\bullet \otimes \ol{\Pp}_b).}{}{\Theta_b}
\end{equation}

The following is due to Arinkin in the case of $X_b$ integral.

\begin{theorem}[\cite{arinkin}]\label{thm:deriveequiv}
Let $X_b$ be an integral curve with planar singularities and $\delta$ an integer. The moduli space of rank $1$ torsion-free sheaves over $\ol{\Jac}^{\, \delta}(X_b)$ is $\ol{\Jac}^{\, \delta}(X_b)$ itself. Furthermore the Fourier--Mukai functor $\Theta_b$ is a derived equivalence.
\end{theorem}


\subsection{Fourier--Mukai and normalization}\label{subsection:FM}

Let us consider the normalization $\nu_a : C_a \to X_b$ of an integral curve $X_b$ with nodal singularities, and denote by $\delta-\rho$ the degree of the singular divisor $\sing(X_b)$, where $\delta$ and $\rho$ are integers. In this section we study the interplay of $\nu_a$ with the Fourier--Mukai transform constructed by Arinkin\footnote{The results contained in this section were previously known to D. Arinkin, to whom we are indebted for conversations.}.


Since ${C}_a$ is smooth, the Jacobian $\Jac^{\, 0}({C}_a)$ is a smooth abelian variety known to be autodual. Choosing a point in $\Jac^{\, \delta}(C_a)$ and $\Jac^{\, \rho}(C_a)$ provides an isomorphism from the abelian variety $\Jac^{\, 0}({C}_a)$ to the torsors $\Jac^{\, \delta}({C}_a)$ and $\Jac^{\, \rho}({C}_a)$. Using this isomorphism, we naturally obtain a Poincar\'e line bundle
\[\Pp_a\longrightarrow\Jac^{\, \rho}({C}_a) \times \Jac^{\, \delta}({C}_a)\]
from the Poincaré line bundle over $\Jac^{\, 0}({C}_a) \times \Jac^{\, 0}({C}_a)$. Then $\Pp_a$ is a universal family of topologically trivial line bundles over $\Jac^{\, \rho}({C}_a)$ parametrized by $\Jac^{\, \delta}({C}_a)$.

Consider the pushforward morphism 
\[\check{\nu}_a:\Jac^{\, \rho}({C}_a)\longrightarrow\ol{\Jac}^{\, \delta}(X_b)\] induced from the normalization map $\nu_a : {C}_a \longrightarrow X_b$, and the pullback map 
\[
\hat{\nu}_a : \Jac^{\, \delta}(X_b) \longrightarrow \Jac^{\, \delta}({C}_a).
\]

Let $\id_{\wt{\Jac}}$ and $\id_{\Jac}$ be the identity morphisms in $\Jac^{\, \rho}({C}_a)$ and $\Jac^{\, \delta}(X_b)$ respectively. Recall from \eqref{eq:Poincarsheaf} that the Poincar\'e sheaf $\ol{\Pp}_b$ on $\ol{\Jac}^{\, \delta}(X_b) \times\ol{\Jac}^{\, \delta}(X_b)$ is constructed from the Poincar\'e line bundle 
\[\Pp_b \longrightarrow \ol{\Jac}^{\, \delta}(X_b) \times \Jac^{\, \delta}(X_b)\] given in \eqref{eq:Poincarebundle}. Then both $\left ( \check{\nu}_a \times \id_{\Jac}  \right )^*\Pp_{b}$ and $(\id_{\widetilde{\Jac}} \times \hat{\nu}_a)^*\Pp_a$ are bundles over $\Jac^{\, \rho}({C}_a) \times \Jac^{\, \delta}(X_b)$. The next result shows that they differ from a line bundle which is a pullback from a line bundle over $\Jac^{\, \delta}(X_b)$.

Consider the projections
\[
\xymatrix{
& \Jac^{\, \rho}({C}_a) \times \Jac^{\, \delta}(X_b) \ar[ld]_{\pi'_1} \ar[rd]^{\pi'_2}   &
\\
\Jac^{\, \rho}({C}_a) &   & \Jac^{\, \delta}(X_b).
}
\]

\begin{proposition} \label{pr relation of Poincares}
Let $\nu_a : C_a \to X_b$ be normalization of an integral curve $X_b$ with planar singularities. Then we have that
\[
\left ( \check{\nu}_a \times \id_{\Jac}  \right )^*\Pp_{b} \cong (\id_{\widetilde{\Jac}} \times \hat{\nu}_a)^*\Pp_a \otimes (\pi'_2)^* \Ww, 
\]
for some line bundle $\Ww \to \Jac^{\, \delta}(X_b)$.
\end{proposition}

\begin{proof}
After a certain adaptation, the proof is analogous to that of \cite[Lemma 5.2]{Borel}. We include it here for the sake of clarity.
First note that $\left ( \check{\nu}_a \times \id_{\Jac}  \right )^*\Pp_{b}$ is a family of topologically trivial line bundles over $\Jac^{\, \rho}({C}_a)$ parametrized by $\Jac^{\, \delta}(X_b)$. Since $\Pp_a \longrightarrow \Jac^{\, \rho}({C}_a) \times \Jac^{\, \delta}({C}_a)$ is a universal family for such objects, there exists a morphism
\[
g : \Jac^{\, \delta}(X_b) \longrightarrow \Jac^{\, \delta}({C}_a),
\]
such that 
\begin{equation}\label{eq:existsgsuchthat}
\left ( \check{\nu}_a \times \id_{\Jac}  \right )^*\Pp_{b} \cong (\id_{\widetilde{\Jac}} \times g)^*\Pp_a \otimes (\pi'_2)^* \Ww_b, 
\end{equation}
for some line bundle $\Ww_b \to \Jac^{\, \delta}(X_b)$. We claim that $g = \hat{\nu}_a$. In order to prove it, we shall need several preliminary statements.

Recall the description of $\Pp_{b,M}$ given in \eqref{eq description of Pp_M},
\[\Pp_{b,M} = \Dd_{f_2} (\Uu_b \otimes f_1^*M)^{-1} \otimes \Dd_{f_2}(f_1^*M) \otimes \Dd_{f_2}(\Uu_b),\] for each $M \in \Jac^{\, \delta}(X_b)$, where 
\[f_1 : X_b \times \ol{\Jac}^{\, \delta}(X_b) \longrightarrow X_b\ \ \ \text{ and }\ \ \ f_2 : X_b \times \ol{\Jac}^{\, \delta}(X_b) \longrightarrow \ol{\Jac}^{\, \delta}(X_b).\] 
Take also the projections
\[\tilde{f}_1 : {C}_a \times \Jac^{\, \rho}({C}_a) \longrightarrow {C}_a,\ \ \ \ \tilde{f}_2 : {C}_a \times \Jac^{\, \rho}({C}_a) \longrightarrow \Jac^{\, \rho}({C}_a)\] 
and
\[f'_1 : X_b \times \Jac^{\, \rho}({C}_a) \longrightarrow X_b\ \ \ \ \ f'_2 : X_b \times \Jac^{\, \rho}({C}_a) \longrightarrow \Jac^{\, \rho}({C}_a).\] 

Obviously
\begin{equation} \label{eq comm diagram for f}
\xymatrix{
X_b \times \Jac^{\, \rho}({C}_a) \ar[rr]^{\id_{X_b} \times \check{\nu}_a} \ar[rrd]_{f'_1} & & X_b \times \overline{\Jac}^{\, \delta}(X_b) \ar[d]^{f_1}
\\
& & X_b}
\end{equation}
commutes.

The following diagram is also obviously Cartesian,
\[
\xymatrix{
X_b \times \Jac^{\, \rho}({C}_a) \ar[rr]^{\id_{X_b} \times \check{\nu}_a} \ar[d]_{f'_2} & & X_b \times \ol{\Jac}^{\, \delta}(X_b) \ar[d]^{f_2}
\\
\Jac^{\, \rho}({C}_a) \ar[rr]^{\check{\nu}_a} & & \ol{\Jac}^{\, \delta}(X_b),
}
\]
thus, since we know from \cite[Proposition 44 (1)]{esteves} that the determinant of cohomology commutes with base change, 
\begin{equation}\label{eq:basechange}
\check{\nu}_a^* \Dd_{f_2} = \Dd_{f'_2} (\id_{X_b} \times  \check{\nu}_a)^*. 
\end{equation}

Let $\Uu_a \longrightarrow {C}_a \times \Jac^{\, \rho}({C}_a)$ be the universal bundle of topologically trivial line bundles over ${C}_a$. Since ${C}_a$ is smooth, we may apply \eqref{eq description of Pp_M}, so that the Poincar\'e bundle $\Pp_a$ satisfies 
\begin{equation}\label{eq:fiberPgamma}
\Pp_{a, N} = \Dd_{\tilde{f}_2} (\Uu_a \otimes \tilde{f}_1^*N)^{-1} \otimes \Dd_{\tilde{f}_2}(\tilde{f}_1^*N) \otimes \Dd_{\tilde{f}_2}(\Uu_a),
\end{equation} for any $N\in\Jac^{\, \delta}({C}_a)$.
Recall that $\Uu_b \to X_b \times \ol{\Jac}^{\, \delta}(X_b)$ is the universal sheaf of degree $\delta$ torsion-free sheaves on $X_b$ and consider the pullback $(\id_{X_b} \times \check{\nu}_a)^*\Uu_b$ which is a sheaf over $X_b\times \Jac^{\, \rho}({C}_a)$. Observe that $(\nu_a \times \id_{\widetilde{\Jac}})_* \Uu_a$ is a family of rank $1$ torsion-free sheaves of degree $\delta$ over $X_b$. Then, by the universality property, there exists a morphism $h : \Jac^{\, \rho}({C}_a) \to \overline{\Jac}^{\, \delta}(X_b)$ and a line bundle $\Ww_a \to \Jac^{\, \rho}({C}_a)$ such that 
\[
(\id_{X_b} \times h)^* \Uu_b \cong (\nu_a \times \id_{\widetilde{\Jac}})_* \Uu_a \otimes (f'_2)^*\Ww_a.
\]
Note that $h$ coincides pointwise with $\check{\nu}_a$, since $\Jac^{\, \rho}({C}_a)$ is smooth it follows that $h = \check{\nu}_a$. Then,
\begin{equation} \label{eq relation between Uu_b and Uu_gamma}
(\id_{X_b} \times \check{\nu}_a)^* \Uu_b \cong (\nu_a \times \id_{\widetilde{\Jac}})_* \Uu_a \otimes (f'_2)^*\Ww_a.
\end{equation}
Since the diagram
\[
\xymatrix{
{C}_a \times \Jac^{\, \rho}({C}_a) \ar[rr]^{\nu_a \times \id_{\widetilde{\Jac}}} \ar[rrd]_{\tilde{f}_2} & & X_b \times \Jac^{\, \rho}({C}_a) \ar[d]^{f'_2}
\\
& & \Jac^{\, \rho}({C}_a),
}
\]
commutes, the definition of the determinant of cohomology ensures that
\begin{equation} \label{eq determinant of coh and pushforward}
\Dd_{f'_2} (\nu_a \times \id_{\widetilde{\Jac}})_* \cong \Dd_{\tilde{f}_2}.
\end{equation}

Finally,
\[
\xymatrix{
& & & {C}_a \times \Jac^{\, \rho}({C}_a) \ar[llld]_{\nu_a \circ \tilde{f}_1} \ar[rrrd]^{\tilde{f}_2} \ar[d]^{(\nu_a \times \id_{\widetilde{\Jac}})} & & &
\\
X_b & & & X_b \times \Jac^{\, \rho}({C}_a) \ar[lll]^{f'_1} \ar[rrr]_{f'_2} & & & \Jac^{\, \rho}({C}_a).
}\]
also commutes, so $f'_1((f'_2)^{-1}(U)) = \nu_a \tilde{f}_1 (\tilde{f}_2^{-1}(U))$ for every open subset $U \subset \Jac^{\, \rho}({C}_a)$. It then follows that for each $M \in \Jac^{\, \delta}(X_b)$ and each open set $U\subset\Jac^{\, \rho}({C}_a)$,
\begin{align*}
((f'_2)_* (f'_1)^* M)(U) = & \lim_{W \supseteq f'_1((f'_2)^{-1}(U))} M(W)
\\
= & \lim_{W \supseteq \nu_a \tilde{f}_1 (\tilde{f}_2^{-1}(U))} M(W) 
\\
= & ((\tilde{f}_2)_*(\nu_a \circ \tilde{f}_1)^* M)(U), 
\end{align*}
so $(f'_2)_* (f'_1)^* = (\tilde{f}_2)_*(\nu_a \circ \tilde{f}_1)^*$. As a consequence of this identification, we have 
\begin{equation} \label{eq relation of det coh for M}
\Dd_{f'_2} (f'_1)^* \cong \Dd_{\tilde{f}_2} \tilde{f}_1^* \nu_a^*.
\end{equation}

Using the projection formula and \eqref{eq:existsgsuchthat}--\eqref{eq relation of det coh for M}, we have that, for any $M \in \Jac^{\, \delta}(X_b)$, 
\begin{align*}
\Pp_{a, g(M)} \cong \, & \check{\nu}_a^*\Pp_{b,M}
\\
\cong \, &  \check{\nu}_a^* \left ( \Dd_{f_2} \left (\Uu_b \otimes f_1^*M \right )^{-1} \otimes \Dd_{f_2}(f_1^*M) \otimes \Dd_{f_2}\left (\Uu_b \right ) \right )
\\
\cong \, &  \check{\nu}_a^* \Dd_{f_2} \left (\Uu_b \otimes f_1^*M \right )^{-1} \otimes \check{\nu}_a^* \Dd_{f_2}(f_1^*M) \otimes \check{\nu}_a^* \Dd_{f_2}\left (\Uu_b \right )
\\
\cong \, &  \Dd_{f'_2} \left ( (\id_{X_b} \times  \check{\nu}_a)^* \left (\Uu_b \otimes f_1^*M \right) \right )^{-1} \otimes \Dd_{f'_2} \left ( (\id_{X_b} \times  \check{\nu}_a)^*(f_1^*M) \right ) \otimes \Dd_{f'_2} \left ( (\id_{X_b} \times  \check{\nu}_a)^* \Uu_b \right )
\\
\cong \, &  \Dd_{f'_2} \left ( (\id_{X_b} \times  \check{\nu}_a)^* \Uu_b \otimes (f'_1)^*M \right )^{-1} \otimes \Dd_{f'_2} \left ( (f'_1)^*M) \right ) \otimes \Dd_{f'_2} \left ( (\id_{X_b} \times  \check{\nu}_a)^* \Uu_b \right )
\\
\cong \, &  \Dd_{f'_2} \left ( (\nu_a \times  \id_{\widetilde{\Jac}})_* \Uu_a \otimes (f'_1)^*M  \otimes (f'_2)^*\Ww_a \right )^{-1} \otimes 
\\
& \qquad \qquad  \otimes \Dd_{f'_2} \left ( (f'_1)^*M) \right ) \otimes \Dd_{f'_2} \left ( (\nu_a \times  \id_{\widetilde{\Jac}})_* \Uu_a  \otimes (f'_2)^*\Ww_a \right )
\\
\cong \, &  \Dd_{f'_2} \left ( (\nu_a \times  \id_{\widetilde{\Jac}})_* \Uu_a \otimes (f'_1)^*M  \right )^{-1} \otimes \Ww_a^{-1} \otimes 
\\
& \qquad \qquad  \otimes \Dd_{f'_2} \left ( (f'_1)^*M) \right ) \otimes \Dd_{f'_2} \left ( (\nu_a \times  \id_{\widetilde{\Jac}})_* \Uu_a  \right ) \otimes \Ww_a
\\
\cong \, &  \Dd_{f'_2} \left ( (\nu_a \times  \id_{\widetilde{\Jac}})_*  \left (\Uu_a \otimes \tilde{f}_1^* \nu_a^* M \right ) \right )^{-1} \otimes \Dd_{f'_2} \left ( (f'_1)^*M) \right ) \otimes \Dd_{f'_2} \left ( (\nu_a \times  \id_{\widetilde{\Jac}})_* \Uu_a \right )
\\ 
\cong \, &  \Dd_{\tilde{f}_2} (\Uu_a \otimes \tilde{f}_1^*\nu_a^*M)^{-1}  \otimes \Dd_{\tilde{f}_2} \left (\tilde{f}_1^*\nu_a^*M \right ) \otimes \Dd_{\tilde{f}_2} (\Uu_a)
\\ 
\cong \, & \Pp_{\nu_a^*M}
\\ 
\cong \, & \Pp_{a, \hat{\nu}_a(M)}.
\end{align*}
This implies that $g(M) = \hat{\nu}_a(M)$ for any $M \in \Jac^{\, \delta}(X_b)$. Since $\Jac^{\, \delta}(X_b)$ is smooth, this suffices to state that $g = \hat{\nu}_a$, thus completing the proof.
\end{proof}

Consider now the projections
\begin{equation}\label{eq:tildepi12}
\xymatrix{
&  \Jac^{\, \rho}({C}_a) \times \Jac^{\, \delta}({C}_a) \ar[ld]_{\eta_1} \ar[rd]^{\eta_2} &
\\
\Jac^{\, \rho}({C}_a) & & \Jac^{\, \delta}({C}_a).
}
\end{equation}
Using the Poincaré line bundle $\Pp_a \to \Jac^{\, \rho}({C}_a) \times \Jac^{\, \delta}({C}_a)$ as kernel, define the Fourier--Mukai functor
\begin{equation}\label{eqThetagamma}
\morph{D^b(\Jac^{\, \rho}({C}_a))}{D^b(\Jac^{\, \delta}({C}_a))}{\Ee^\bullet}{R\eta_{2,*} (\eta_1^*\Ee^\bullet \otimes \Pp_a).}{}{\Theta_a}
\end{equation}
Mukai \cite[Theorem 2.2]{mukai} proved in this classical setting that the above functor is a derived equivalence. Note also that it is the particular case of Arinkin's Theorem \ref{thm:deriveequiv} for smooth curves.

The next theorem establishes a relation between the Fourier--Mukai functors $\Theta_b$ in \eqref{eq FM} and $\Theta_a$ in \eqref{eqThetagamma}, for complexes arising as pushforward via $\check\nu_a$. Recall the moduli space of parabolic modules $\PMod^{\delta}_{\ell}({C}_a,\widetilde{D}_a)$ of degree $\delta$ and type $\ell = (1, \dots, 1)$, and let $\Oo_{\PMod}$ be the corresponding structure sheaf. Recall also the morphisms $\tau$ in \eqref{eq definition of tau} and $\dot\nu_a$ in \eqref{eq definition of dotnu}.

\begin{theorem} \label{tm relation of FM}
Let $\nu_a : C_a \to X_b$ be normalization of an integral curve $X_b$ with planar singularities and let $\Ee^\bullet$ be a complex in $D^b(\Jac^{\, \rho}({C}_a))$. Then there is an isomorphism
\[
\Theta_b(R\check{\nu}_{a,*}\Ee^\bullet)\otimes\tau_* \Oo_{\PMod} \cong R\tau_* \dot{\nu}_a^* \Theta_a(\Ee^\bullet) \otimes \ol{\Pp}_b|_{ \{ \nu_{a,*}\Oo_{{C}_a} \} \times \ol{\Jac} }.
\]
\end{theorem}
\begin{proof}
Consider the pullback of the Poincar\'e sheaf to $\ol{\Jac}^{\, \delta}(X_b) \times \PMod^{\delta}_{\ell}({C}_a,\widetilde{D}_a)$,
\begin{equation} \label{eq definition dotPp}
\dot\Pp_b := (\id_{\ol{\Jac}} \times \tau)^* \ol{\Pp}_b.
\end{equation}
Applying the projection formula yields
\begin{equation} \label{eq relation between dotP and olP}
(\id_{\ol{\Jac}} \times \tau)_* \dot\Pp_b \cong \ol{\Pp}_b\otimes (\id_{\ol{\Jac}} \times \tau)_* \Oo_{\ol{\Jac} \times \PMod}.
\end{equation}

Consider the projections,
\begin{equation} \label{eq projections dotpi}
\xymatrix{
& \ol{\Jac}^{\, \delta}(X_b) \times \PMod^{\delta}_{\ell}({C}_a,\widetilde{D}_a) \ar[ld]_{\dot\pi_1} \ar[rd]^{\dot\pi_2} & 
\\
\ol{\Jac}^{\, \delta}(X_b) & & \PMod^{\delta}_{\ell}({C}_a,\widetilde{D}_a)
}
\end{equation}
and notice that, since $\ol{\Jac}^{\, \delta}(X_b)$ is projective, connected and reduced, 
\[\dot\pi_{2,\ast}\Oo_{\ol{\Jac}\times\PMod}\cong\Oo_{\PMod}.\]
Note also that 
\[\pi_2 \circ (\id_{\ol{\Jac}} \times \tau) = \tau \circ \dot\pi_2\ \ \ \text{ and }\ \ \ \dot\pi_1 = \pi_1 \circ (\id_{\ol{\Jac}} \times \tau),\]
where $\pi_1$ and $\pi_2$ are the projections defined in \eqref{eq:projections-compact-Jac}. Now, recalling \eqref{eq FM} and using these relations, the identity \eqref{eq relation between dotP and olP}, and the projection formula and the fact that the derived direct image is functorial with respect to compositions, we have that, for $\Ff^\bullet\in D^b\big(\ol{\Jac}^{\, \delta}(X_b)\big)$,
\begin{equation} \label{eq:Theta_b-it1}
\begin{split}
R\tau_* R\dot\pi_{2,*} \big (\dot\pi_1^*\Ff^\bullet\otimes \dot\Pp_b \big )&\cong R\pi_{2,*}R(\id_{\ol{\Jac}} \times \tau)_*\big ((\id_{\ol{\Jac}} \times \tau)^*\pi_1^*\Ff^\bullet\otimes \dot\Pp_b \big )\\
&\cong R\pi_{2,*}\big (\pi_1^*\Ff^\bullet\otimes (\id_{\ol{\Jac}} \times \tau)_*\dot\Pp_b \big )\\
&\cong R \pi_{2,*}(\pi_1^*\Ff^\bullet \otimes \ol{\Pp}_b)\otimes\pi_{2,*}(\id_{\ol{\Jac}} \times \tau)_* \Oo_{\ol{\Jac} \times \PMod}\\
&\cong R \pi_{2,*}(\pi_1^*\Ff^\bullet \otimes \ol{\Pp}_b)\otimes\tau_*\dot\pi_{2,*}\Oo_{\ol{\Jac} \times \PMod}\\
&=\Theta_b(\Ff^\bullet)  \otimes\tau_* \Oo_{\PMod}.
\end{split}
\end{equation}

The next step consists of establishing a relation between $\dot\Pp_b$ and $\Pp_a$. As $\dot\Pp_b$ parametrizes rank 1 torsion-free sheaves over $\ol\Jac^{\, \delta}(X_b)$, then 
\[(\check\nu_a \times \id_{\PMod})^*\dot\Pp_b \to \Jac^{\, \rho}({C}_a) \times \PMod^{\delta}_{\ell}({C}_a,\widetilde{D}_a)\] is a family of rank 1 torsion-free sheaves over $\Jac^{\, \rho}({C}_a)$ ({\it i.e.} line bundles on $\Jac^{\, \rho}({C}_a)$) parametrized by $\PMod^{\delta}_{\ell}({C}_a,\widetilde{D}_a)$. Since $\Pp_a \to \Jac^{\, \rho}({C}_a) \times \Jac^{\, \delta}({C}_a)$ is a universal bundle, there is a morphism 
\[
g : \PMod^{\delta}_{\ell}({C}_a,\widetilde{D}_a) \to \Jac^{\, \delta}({C}_a),
\]
such that
\[
(\check{\nu}_a \times \id_{\PMod})^*\dot\Pp_b \cong (\id_{\wt{\Jac}} \times g)^*\Pp_a \otimes (\pi''_2)^*\Ww'.
\]
where $\pi''_2$ is the projection given by
\begin{equation} \label{eq projections pi''}
\xymatrix{
& \Jac^{\, \rho}({C}_a) \times \PMod^{\delta}_{\ell}({C}_a,\widetilde{D}_a) \ar[ld]_{\pi''_1} \ar[rd]^{\pi''_2}   &
\\
\Jac^{\, \rho}({C}_a) &   & \PMod^{\delta}_{\ell}({C}_a,\widetilde{D}_a),
}
\end{equation}
and $\Ww'$ is some line bundle on $\PMod^{\delta}_{\ell}({C}_a,\widetilde{D}_a)$.

Thanks to \eqref{eq:Poincarsheaf} and to the definition of $\dot\Pp_b$ given in \eqref{eq definition dotPp}, we see that that restricting it to $\Jac^{\, \delta}(X_b) \hookrightarrow{\tau_0^{-1}} \PMod^{\delta}_{\ell}({C}_a,\widetilde{D}_a)$  gives $\dot\Pp_b |_{\ol{\Jac} \times \Jac} \cong \Pp_b$. Then, with $|_{\wt{\Jac}\times\Jac}$ denoting the restriction to  $\Jac^{\, \rho}({C}_a)\times\Jac^{\, \delta}(X_b)$ via $\tau_0^{-1}$, 
\[
(\id_{\wt{\Jac}} \times g)^*\Pp_a |_{\wt{\Jac} \times \Jac} \otimes (\pi''_2)^*\Ww'|_{\wt{\Jac} \times \Jac} \cong (\check{\nu}_a \times \id_{\PMod})^*\dot\Pp_b|_{\wt{\Jac} \times \Jac} \cong (\check{\nu}_a \times \id_{\Jac})^*\Pp_b.
\]
So Proposition \ref{pr relation of Poincares} shows that
\[
(\id_{\wt{\Jac}} \times g)^*\Pp_a |_{\wt{\Jac} \times \Jac} \otimes (\pi''_2)^*\Ww'|_{\wt{\Jac} \times \Jac} \cong (\id_{\wt{\Jac}} \times \hat\nu_a)^*\Pp_a \otimes (\pi'_2)^*\Ww_b.
\]
Then the diagram
\[
\xymatrix{
\Jac^{\, \delta}(X_b) \ar@{^(->}[d]_-{\tau_0^{-1}} \ar[rd]^-{\hat\nu_a}\\
\PMod^{\delta}_{\ell}({C}_a,\widetilde{D}_a) \ar[r]^-{g} & \Jac^{\, \delta}({C}_a) 
}
\]
commutes, hence we conclude from \eqref{eq hatnu and dotnu commute} that $g = \dot\nu_a$, as both coincide in the dense open subset $\Jac^{\, \delta}(X_b)$. As a consequence, we obtain
\begin{equation} \label{eq preliminary relation between dotPoincare and wtPoincare}
(\check{\nu}_a \times \id_{\PMod})^*\dot\Pp_b \cong (\id_{\wt{\Jac}} \times \dot\nu_a)^*\Pp_a \otimes (\pi''_2)^*\Ww'.
\end{equation} 

Restricting \eqref{eq preliminary relation between dotPoincare and wtPoincare} to $\{ \Oo_{{C}_a} \} \times\PMod^{\delta}_{\ell}({C}_a,\widetilde{D}_a)$ yields
\[
\Ww' \cong ((\check{\nu}_a \times \id_{\PMod})^*\dot\Pp_b)|_{ \{ \Oo_{{C}_a} \} \times \PMod} \cong \dot{\Pp}_b |_{ \{ \nu_{a, *}\Oo_{{C}_a} \} \times \PMod},
\]
because $\Pp_a$ is normalized as in \eqref{eq:Poinc-normalized}. But $\dot{\Pp}_b$ is the pullback of $\ol{\Pp}_b$ under $(\id_{\ol{\Jac}} \times \tau)$, so
\begin{equation} \label{eq description of Ww'}
\Ww' \cong \tau^* \big(\ol{\Pp}_b |_{ \{ \nu_{a, *}\Oo_{{C}_a} \} \times \ol{\Jac}}\big).
\end{equation} 
Combining this description with \eqref{eq preliminary relation between dotPoincare and wtPoincare}, we conclude that
\begin{equation} \label{eq relation between dotPoincare and wtPoincare}
(\check{\nu}_a \times \id_{\PMod})^*\dot\Pp_b \cong (\id_{\wt{\Jac}} \times \dot\nu_a)^*\Pp_a \otimes (\pi''_2)^*\tau^*\big(\ol{\Pp}_b |_{ \{ \nu_{a, *}\Oo_{{C}_a} \} \times \ol{\Jac}}\big).
\end{equation} 

We now address the last part of the proof. Recall the projections $\pi''_1$ and $\pi''_2$ from \eqref{eq projections pi''}, $\dot{\pi}_1$ and $\dot{\pi}_2$ from \eqref{eq projections dotpi} and finally $\eta_1$ and $\eta_2$ from \eqref{eq:tildepi12}, and observe that 
\begin{equation}\label{eq:functo}
\begin{split}
\pi''_2 &= \dot\pi_2 \circ (\check{\nu}_a \times \id_{\PMod}),
\\ 
\pi''_1 &= \eta_1 \circ (\id_{\widetilde{\Jac}} \times \dot{\nu}_a).
\end{split}
\end{equation}
Moreover,
\[
\xymatrix{
\Jac^{\, \rho}({C}_a) \times \PMod^{\delta}_{\ell}({C}_a,\widetilde{D}_a) \ar[rr]^-{\pi''_1} \ar@{^{(}->}[d]_-{\check{\nu}_a\times\id_{\PMod}} & & \Jac^{\, \rho}({C}_a) \ar@{^{(}->}[d]^-{\check{\nu}_a}
\\
\ol{\Jac}^{\, \delta}(X_b)\times \PMod^{\delta}_{\ell}({C}_a,\widetilde{D}_a) \ar[rr]^-{\dot\pi_1} & & \ol{\Jac}^{\, \delta}(X_b),
}
\]
and
\[
\xymatrix{
\Jac^{\, \rho}({C}_a) \times \PMod^{\delta}_{\ell}({C}_a,\widetilde{D}_a) \ar[rr]^-{\id_{\widetilde{\Jac}} \times \dot{\nu}_a} \ar[d]_-{\pi''_2} & & \Jac^{\, \rho}({C}_a) \times \Jac^{\, \delta}({C}_a) \ar[d]^-{\eta_2}
\\
\PMod^{\delta}_{\ell}({C}_a,\widetilde{D}_a) \ar[rr]^{\dot{\nu}_a} & & \Jac^{\, \delta}({C}_a)
}
\]
are Cartesian diagrams.

The derived direct image and pullback are functorial with respect to compositions. Furthermore, the base-change formula applies to the two previous Cartesian diagrams. So, starting from \eqref{eq:Theta_b-it1} and using these facts, together with \eqref{eq relation between dotPoincare and wtPoincare} and with the projection formula, finally yields
\begin{align*}
\Theta_b(R\check{\nu}_{a,*}\Ee^\bullet)\otimes\tau_* \Oo_{\PMod} \cong & R\tau_* R\dot\pi_{2,*} \big ( \dot\pi_1^*R\check{\nu}_{a,*}\Ee^\bullet \otimes \dot\Pp_b \big )
\\
\cong & R\tau_* R\dot\pi_{2,*} \big ( R(\check{\nu}_a \times \id_{\PMod})_* (\pi''_1)^*\Ee^\bullet \otimes \dot\Pp_b \big )
\\
\cong & R\tau_* R\dot\pi_{2,*}R (\check{\nu}_a \times \id_{\PMod})_*  \big ( (\pi''_1)^*\Ee^\bullet \otimes (\check{\nu}_a \times \id_{\PMod})^*\dot\Pp_b \big )
\\
\cong & R\tau_* R\pi''_{2,*} \big ( (\pi''_1)^*\Ee^\bullet \otimes (\check{\nu}_a \times \id_{\PMod})^*\dot\Pp_b \big )
\\
\cong & R\tau_* R\pi''_{2,*} \big ( (\pi''_1)^*\Ee^\bullet \otimes (\id_{\wt{\Jac}} \times \dot\nu_a)^*\Pp_a \otimes (\pi''_2)^*\tau^* \big(\ol{\Pp}_b |_{ \{ \nu_{a, *}\Oo_{{C}_a} \} \times \ol{\Jac}}\big ) \big )
\\
\cong & R\tau_* R\pi''_{2,*} \big ( (\pi''_1)^*\Ee^\bullet \otimes (\id_{\wt{\Jac}} \times \dot\nu_a)^*\Pp_a \big ) \otimes \ol{\Pp}_b |_{ \{ \nu_{a, *}\Oo_{{C}_a} \} \times \ol{\Jac}}
\\
\cong & R\tau_* R\pi''_{2,*} (\id_{\widetilde{\Jac}} \times \dot{\nu}_a)^* (\eta_1^*\Ee^\bullet \otimes \Pp_a) \otimes \ol{\Pp}_b |_{ \{ \nu_{a, *}\Oo_{{C}_a} \} \times \ol{\Jac}}
\\
\cong & R\tau_* \dot{\nu}_a^* R\eta_{2,*}  (\eta_1^*\Ee^\bullet \otimes \Pp_a) \otimes \ol{\Pp}_b |_{ \{ \nu_{a, *}\Oo_{{C}_a} \} \times \ol{\Jac}}
\\
\cong & R\tau_* \dot{\nu}_a^* \Theta_a(\Ee^\bullet) \otimes \ol{\Pp}_b |_{ \{ \nu_{a, *}\Oo_{{C}_a} \} \times \ol{\Jac}},
\end{align*}
as claimed.
\end{proof}

\subsection{Branes and Fourier--Mukai transform}

We are now at the last step towards the goal of proving the duality statement between the branes we constructed.

Along this section we fix the degree to be trivial, $d = 0$. We require also Assumption \ref{ass gamma of maximal order}, so $p : C \to X$ has order $m = n$ (hence $r = 1$ and $\rho = 0$) and the spectral data of $\M_X(n,d)^p$ is as described in \eqref{eq fiber as quotient of fibers}. In particular, the normalization of the spectral curves is always $C$ and $B^p_\ni$ coincides with $B^p_{\ni}$, the subset parametrizing integral and nodal curves. 

Let us use the Hitchin section $\sigma_C$ constructed from a spin structure $K_C^{1/2} = p^*K_X^{1/2}$ to choose a point in $\Jac^{\, \delta}({C})$. Consider associated the Poincar\'e bundle
\[
\Pp \to \Jac^{\, 0}({C}) \times \Jac^{\, \delta}({C}).
\]
If $N\in\Jac^{\, \delta}({C})$, then $\Pp_{N} =\Pp|_{\Jac^{0}(C)\times\{N\}}$ is the line bundle over $\Jac^{\,0}(C)$ corresponding to the point $N\otimes K_C^{-(n-1)/2}$ of $\Jac^{\, 0}({C})$ under autoduality of $\Jac^{\, 0}({C})$.
We can assume that $\Pp_a$ is normalized so that
\begin{equation}\label{eq:Poinc-normalized}
\Pp |_{ \{ \Oo_{{C}} \} \times\Jac^{\, \delta}({C}) } \cong \Oo_{\Jac^{\, \delta}({C})}.
\end{equation}

 Let $b \in B^p_{\ni}$ and $\phi\in H^0({C},K_C)^\free$ be a representative of $b$, and recall from \eqref{eq:pushforwardnormalizr=1} the pushforward morphism $\check{\nu}_\phi:\Jac^0(C)\hookrightarrow\overline{\Jac}^{\,\delta}(X_b)$. We wish to understand the Fourier--Mukai transform of the sheaf $\check{\nu}_{\phi,*}\Nm^* \widecheck{\Ll}$ over $\ol{\Jac}^{\, \delta}(X_b)\cong h_{X,n}^{-1}(b)$ under the derived equivalence $\Theta_b:D^b \big ( \ol{\Jac}^{\, \delta}(X_b) \big )\to D^b \big ( \ol{\Jac}^{\, \delta}(X_b) \big )$.
Indeed, by Proposition~\ref{prop spectral data BBB'}, $\check{\nu}_{\phi,*}\Nm^* \widecheck{\Ll}$ is supported on $\image (\check{\nu}_\phi)\cong \Jac^0(C)$ and is the restriction to the Hitchin fiber over $b$ (intersected with $\M_X(n,0)^p$) of the hyperholomorphic line bundle $\Lll$ defining the rank $1$ Narasimhan-Ramanan $\BBB$-brane $\BBBB_\Ll^p$.  
It is a classical fact that that the Fourier--Mukai of a line bundle over $\Jac^{\, 0}({C})$ is a complex supported only in one degree (namely the genus of ${C}$), so it can be considered as a sheaf as opposed to a complex. Hence, by Theorem \ref{tm relation of FM}, $\Theta_b(\check{\nu}_{\phi,*}\Nm^* \widecheck{\Ll})$ is a sheaf over $\ol{\Jac}^{\, \delta}(X_b)$, whose support is the intersection of the support of the dual $\BAA$-brane with the Hitchin fiber over $b$.

By considering the rank $n$ Narasimhan-Ramanan $\BBB$-brane $\BBBB_{\Ff}^p$, we conclude by the same token that the support of the sheaf $\Theta_b(\bigoplus_{\gamma \in \Gamma}\check{\nu}_{\phi,*} \hat{\gamma}^{*}\widecheck{\Ff})$ determines the support of the dual $\BAA$-brane.




\begin{theorem} \label{tm support of the dual brane in the Hitchin fiber}
Consider the moduli space $\M_X(n,0)$ and the Narasimhan-Ramanan $\BBB$-branes $\BBBB_\Ll^p$ and $\BBBB_{\Ff}^p$ on it associated to a connected unramified cover $p :C \to X$. Let $b=\zeta(\phi) \in B^p_{\ni}$ for some $\phi\in H^0({C},K_C)^\free$. Let $\hat\nu_\phi: \Jac^\delta(X_b)\to\Jac^\delta(C)$ be the pullback morphism associated to the normalization $\nu_\phi$ (thus corresponding to \eqref{eq definition of hat nu}).
\begin{enumerate}[(i)]
\item\label{item1} Let \[\hat{\Ll}:={p}^*\Ll\otimes K_C^{(n-1)/2}\in\Jac^{\, \delta}({C}).\] 
The Fourier--Mukai transform of the hyperholomorphic sheaf $\Lll|_{h_{X,n}^{-1}(b)}\cong \check{\nu}_{\phi,*}\Nm^*\widecheck{\Ll}$ satisfies the relation
\begin{equation}\label{eq:relation FM transf 1}
\Theta_b(\check{\nu}_{\phi,*}\Nm^*\widecheck{\Ll})\otimes\tau_* \Oo_{\PMod}\cong \ol{\Pp}_b |_{ \{ \nu_{\phi, *}\Oo_{{C}} \} \times \overline{\hat\nu_\phi^{-1}(\hat{\Ll})}}
\end{equation} and its support is
\[
\supp(\Theta_b(\check{\nu}_{\phi,*}\Nm^*\widecheck{\Ll}))=\overline{\hat\nu_\phi^{-1}(\hat{\Ll})}=\Hec^{p, \hat{\Ll}}_\ni \cap h_{X,n}^{-1}(b).
\]

\item\label{item2} Analogously, let 
\begin{equation} \label{eq def hatFf}
\hat{\Ff}:=\Ff\otimes K_C^{(n-1)/2}
\end{equation}
and let $\Gamma(\Ff)$ be the orbit of $\Ff$ by the Galois group $\Gamma$ of ${p}$. The Fourier--Mukai transform of the hyperholomorphic sheaf $\Fff|_{h_{X,n}^{-1}(b)}\cong \bigoplus_{\gamma \in \Gamma}\check{\nu}_{\phi,*} \hat{\gamma}^{*}\widecheck{\Ff}$ satisfies the relation 
\begin{equation}\label{eq:relation FM transf n}
\Theta_b \bigg(\bigoplus_{\gamma \in \Gamma}\check{\nu}_{\phi,*} \hat{\gamma}^{*}\widecheck{\Ff}\bigg)\otimes\tau_* \Oo_{\PMod}\cong  \bigoplus_{\gamma \in \Gamma} \tau_*\Oo_{\hat{\nu}_\phi^{-1}(\gamma^{*}\hat{\Ff})} \otimes \ol{\Pp}_b |_{ \{ \nu_{\phi, *}\Oo_{{C}} \} \times \ol{\Jac}}
\end{equation}
and its support is 
\[
\supp\bigg(\Theta_b \bigg  (\bigoplus_{\gamma \in \Gamma}\check{\nu}_{\phi,*} \hat{\gamma}^{*}\widecheck{\Ff}\bigg)\bigg)=\bigcup_{\gamma^{*}\hat{\Ff} \in \Gamma(\Ff)}\overline{\hat{\nu}_\phi^{-1}(\gamma^{*}\hat{\Ff})}=\Hec^{p, \hat{\Ff}}_\ni \cap h_{X,n}^{-1}(b).\]

\end{enumerate}
\end{theorem}
\begin{proof}
By Theorem~\ref{tm relation of FM} we have 
\begin{equation}\label{eq:ThetabThetagammahyperhol}
\Theta_b(\check{\nu}_{\phi,*}\Nm^*\widecheck{\Ll})\otimes\tau_* \Oo_{\PMod}\cong\tau_*\dot\nu_\phi^*\Theta(\Nm^*\widecheck{\Ll}) \otimes \ol{\Pp}_b |_{ \{ \nu_{\phi, *}\Oo_{{C}} \} \times \ol{\Jac}}
\end{equation}
so we need to compute $\Theta(\Nm^*\widecheck{\Ll})$; cf. \eqref{eqThetagamma}, we have removed the index $a$ since now all the $C_a$ are isomorphic to $C$. This is the classical Fourier--Mukai transform on an abelian variety, the only difference being that $\Theta:D^b(\Jac^{\, 0}({C}))\to D^b(\Jac^{\, \delta}({C}))$ takes values in the derived category of complexes over the torsor $\Jac^{\, \delta}({C})$ and not over the actual abelian variety $\Jac^{\, 0}({C})$. So we must use the identification we settled $\Jac^{\, 0}({C})\xrightarrow{\cong}\Jac^{\, \delta}({C})$, by tensorization by $K_C^{(n-1)/2}$.
As is well-known, $\Theta(\Nm^*\widecheck{\Ll})$ is the skyscraper sheaf over the point of $\Jac^{\, \delta}({C})$ whose corresponding point over $\Jac^{\, 0}({C})$ corresponds to the line bundle $\Nm^*\widecheck{\Ll}$ under the autoduality of $\Jac^{\, 0}({C})$.

Consider the commutative diagram 
\begin{equation}\label{eq:diagramrestrictiontocurve}
\xymatrix{{C}\ar@{^{(}->}[r]^-{A_{C}}\ar[d]_{{p}}&\Jac^{\, 0}({C})\ar[d]^{\Nm}\\
X\ar@{^{(}->}[r]_-{A_{X}}&\Jac^0(X),}
\end{equation}
where the horizontal maps are the Abel--Jacobi maps determined by chosen base points on ${C}_a$ and $X$, which correspond under ${p}\colon {C}\to X$. Since the isomorphisms yielding the autoduality of $\Jac^{\, 0}({C})$ and $\Jac^0(X)$ are given by pullback of the Abel--Jacobi maps, this yields the commutative diagram
\[
\xymatrix{\Jac^{\, 0}({C})^{\vee}\ar[rr]^-{A_{{C}}^*}_{\cong}&&\Jac^{\, 0}({C})\ar[rr]^{\cong}_{-\otimes K_C^{(n-1)/2}}&&\Jac^{\, \delta}({C})\\\Jac^0(X)^{\vee}\ar[rr]_-{A_{X}^*}^{\cong}\ar[u]_{\Nm^*}&&\Jac^0(X)\ar[u]^{{p}^*}&&}
\]
(where the maps no longer depend on the choice of base points). Recall that by definition $\widecheck{\Ll}$ is the line bundle over $\Jac^0(X)$ which corresponds to the flat line bundle $\Ll$ over $X$ under autoduality, {\it i.e.} $\Ll\cong A_X^*\widecheck{\Ll}$. So we conclude that $\Theta(\Nm^*\widecheck{\Ll})\cong \Oo_{{p}^*\Ll\otimes K_C^{(n-1)/2}}=\Oo_{\hat\Ll}$.  

Hence, $\tau_*\dot\nu_\phi^*\Theta(\Nm^*\widecheck{\Ll})\cong\tau_*\Oo_{\dot\nu_\phi^{-1}(\hat{\Ll})}$.
Since $\tau$ is finite morphism, $\tau_*\Oo_{\dot\nu_\phi^{-1}(\hat{\Ll})}$ is a coherent sheaf, hence (cf. \cite[ex. 5.5, 5.6, p.124]{hartshorne:1977}) its support in $\overline{\Jac}^{\, \delta}(X_b)$ is closed and is the closure of the image by $\tau$ of the support of $\Oo_{\dot\nu_\phi^{-1}(\hat{\Ll})}$ which, by Lemma \ref{lemma:fiberHecke}, is $\overline{\hat\nu_\phi^{-1}(\hat{\Ll})}$. In addition, by the same lemma, the restriction of $\tau$ to $\dot\nu_\phi^{-1}(\hat{\Ll})$ is a closed embedding, thus we actually have  $\tau_*\Oo_{\dot\nu_\phi^{-1}(\hat{\Ll})}\cong\Oo_{\overline{\hat\nu_\phi^{-1}(\hat{\Ll})}}$.

Considering the transform just as a sheaf (thus ignoring the only degree where the complex is non-zero), we then have, by \eqref{eq:ThetabThetagammahyperhol}, 
\[\Theta_b(\check{\nu}_{\phi,*}\Nm^*\widecheck{\Ll})\otimes\tau_* \Oo_{\PMod}\cong \ol{\Pp}_b |_{ \{ \nu_{\phi, *}\Oo_{{C}} \} \times \overline{\hat\nu_\phi^{-1}(\hat{\Ll})}}.\]  
The sheaf $\tau_* \Oo_{\PMod}$ is supported in $\overline{\Jac}^{\, \delta}(X_b)$, thus 
\[
\supp(\Theta_b(\check{\nu}_{\phi,*}\Nm^*\widecheck{\Ll}))=\overline{\hat\nu_\phi^{-1}(\hat{\Ll})}
\]
as claimed. Note finally that $\overline{\hat\nu_\phi^{-1}(\hat{\Ll})}$ is indeed the spectral data of the intersection $\Hec^{p,\hat{\Ll}}_{\ni} \cap h_{X,n}^{-1}(b)$, by Theorem \ref{tm:spectdataHec} (since $\hat\Ll={p}^*(\Ll^{-1}\otimes K_X^{(n-1)/2})$, then $\Gamma(\hat\Ll)= \{\hat{\Ll} \}$ in \eqref{eq description of h_b restricted to Hek}; cf.\ Remark~\ref{rmk:Jdescends}), completing the proof of \eqref{item1}.
%
%

For the proof of \eqref{item2}, we have, again by Theorem~\ref{tm relation of FM},

\begin{align*}
\Theta_b \bigg(\bigoplus_{\gamma \in \Gamma}\check{\nu}_{\phi,*} \hat{\gamma}^{*}\widecheck{\Ff}\bigg)\otimes\tau_* \Oo_{\PMod} & \cong \bigoplus_{\gamma \in \Gamma}(\Theta_b (\check{\nu}_{\phi,*} \hat{\gamma}^{*}\widecheck{\Ff})\otimes\tau_* \Oo_{\PMod})
\\
& \cong \bigoplus_{\gamma \in \Gamma} \tau_* \hat\nu_\phi^*\Theta(\hat{\gamma}^{*}\widecheck{\Ff})\otimes \ol{\Pp}_b |_{ \{ \nu_{\phi, *}\Oo_{{C}} \} \times \ol{\Jac}} 
\end{align*}
and a similar argument to the one given above, proves
$\Theta(\hat{\gamma}^{*}\widecheck{\Ff})\cong\Oo_{\gamma^*\hat\Ff}$.
Thus
\[\Theta_b \bigg(\bigoplus_{\gamma \in \Gamma}\check{\nu}_{\phi,*} \hat{\gamma}^{*}\widecheck{\Ff}\bigg)\otimes\tau_* \Oo_{\PMod}\cong \bigoplus_{\gamma \in \Gamma}\tau_*\Oo_{\hat{\nu}_\phi^{-1}(\gamma^{*}\hat{\Ff})} \otimes \ol{\Pp}_b |_{ \{ \nu_{\phi, *}\Oo_{{C}_a} \} \times \ol{\Jac}}.\]
As in the preceding case, $\bigoplus_{\gamma \in \Gamma}\tau_*\Oo_{\hat{\nu}_\phi^{-1}(\gamma^{*}\hat{\Ff})}$ is supported in the (not necessarily disjoint) union $\bigcup_{\gamma^{*}\hat{\Ff} \in \Gamma(\Ff)}\overline{\hat{\nu}_\phi^{-1}(\gamma^{*}\hat{\Ff})}$, hence so is the right-hand side of the above isomorphism as $\tau_* \Oo_{\PMod}$ is supported on $\ol{\Jac}^{\, \delta}(X_b)$. We conclude that 
\[
\supp\bigg(\Theta_b \bigg(\bigoplus_{\gamma \in \Gamma}\check{\nu}_{\phi,*} \hat{\gamma}^{*}\widecheck{\Ff}\bigg)\bigg)=\bigcup_{\gamma^{*}\hat{\Ff} \in \Gamma(\Ff)}\overline{\hat{\nu}_\phi^{-1}(\gamma^{*}\hat{\Ff})}\]
which coincides with $\Hec^{p,\hat{\Ff}}_{\ni}\cap h_{X,n}^{-1}(b)$ by Theorem \ref{tm:spectdataHec}.
\end{proof}

Since for each $b\in B^p_{\ni}$, $\Theta_b$ is a derived equivalence by Theorem~\ref{thm:deriveequiv}, the previous theorem provides the following fiberwise duality statement as an immediate consequence. 

\begin{theorem} \label{tm duality}
Let $p : C \to X$ be a connected unramified $n$-cover. Consider the moduli space $\M_X(n,0)$ and let $\hat\Ll$ and $\hat\Ff$ as in Theorem~\ref{tm support of the dual brane in the Hitchin fiber}.
\begin{enumerate}[(i)]
\item The (fiberwise) dual of the rank $1$ Narasimhan-Ramanan $\BBB$-brane $\BBBB^p_\Ll$ (over $B^p_{\ni}$) is the $\BAA$-brane supported on $\Hec^{p,\hat{\Ll}}_{\ni}$, and whose flat bundle satisfies \eqref{eq:relation FM transf 1}.
\item The (fiberwise) dual of the rank $n$ Narasimhan-Ramanan $\BBB$-brane $\BBBB^p_{\Ff}$ (over $B^p_{\ni}$) is the $\BAA$-brane supported on $\Hec^{p,\hat{\Ff}}_{\ni}$, and whose flat bundle satisfies \eqref{eq:relation FM transf n} .
\end{enumerate}
\end{theorem}

\begin{remark}
In \cite{Borel}, duality is conjectured between the $\BBB$-brane $\mathbf{Car}(\Ll)$ (supported on the Cartan locus $\M_{(1,n)} \subset\M_{{C}}(n,nd)$) and the $\BAA$-brane $\mathbf{Uni}(\Ll)$ (supported on the unipotent locus $\Uni_C(\Ll)\subset\M_{{C}}(n,nd)$). Recall from Remark \ref{rk BBB^p pulls back to Car} that $\BBBB^p_\Ll$ pulls-back to $\mathbf{Car}(\Ll)$ under $\hat{p}$ while $\Hec^{p,\Ll}_{\ni}$ is sent to $\Uni_C(\Ll)$ under $\hat{p}$ as we have seen in Proposition~\ref{pr hat p}. Thus, Theorem \ref{tm duality} give us some indications that the general principles of this conjecture seem to hold true. 
\end{remark}

\section{Branes in the absence of a Hitchin section}\label{sec:non-max-ord}

In Section \ref{sc BBB branes} we worked under Assumption \ref{ass gamma of maximal order} to construct a family of $\BBB$-branes supported on $\M_X(n,d)^p$. In Section \ref{section:Heckebranes} we required Assumption \ref{ass d multiple of r}, which is weaker than Assumption \ref{ass gamma of maximal order}, to define the Lagrangian subvariety $\Hec^{p,\Jj}_\ni$ over the locus of Hitchin base $B^p_\ni$ of those spectral curves whose normalization lives in $B^\sm_{C,r}$. 

A straight-forward observation is that, when Assumption \ref{ass gamma of maximal order} fails, one can always define a $\BBB$-brane on $\M_X(n,d)^p$ by considering the trivial bundle on it.

Without Assumption \ref{ass d multiple of r} we face a problem for the construction of our Narasimhan--Ramanan dual $\BAA$-branes, namely the lack of a section for the Hitchin fibration $h_{C,r} : \M_C(r,d) \to B_{C,r}$. Instead, we pick the Lagrangian multisection of the Hitchin fibration given by a very stable bundle. With this multisection we define a Lagrangian subvariety which we study using the branes associated with parabolic subgroups from \cite[Section 6]{Borel}. 

Given a stable bundle $\Vv \to C$, one has that $(\Vv,\phi)$ is a stable Higgs bundle for every $\phi \in H^0(C,\End(\Vv)\otimes K_C)$. Then, we have a natural morphism
\[
\map{H^0(C,\End(\Vv) \otimes K_C)}{\M_C(r,d)}{\phi}{(\Vv,\phi),}{}
\]
which is an embedding as $\Vv$ is simple. We denote by $\Sigma_\Vv$ the image of this map. It is well-known that this provides a Lagrangian subvariety.

\begin{proposition}
For every stable bundle $\Vv$, $\Sigma_\Vv$ is Lagrangian.
\end{proposition}

\begin{proof}
Since $\Vv$ is stable it is simple, so one has that 
\[
\dim \Sigma_\Vv = \dim H^0(C,\End(\Vv)\otimes K_C) = \frac{1}{2} \dim \M_C(r,d).
\]
Since the vector bundle is fixed along $\Sigma_\Vv$, note also that the projection 
\[
T_{(\Vv, \phi)} \Sigma_\Vv \to H^1(C, \End(\Vv)) 
\]
is constantly zero, so $\Omega_{C,1}$ vanishes there.
\end{proof}

After Laumon \cite{laumon}, a vector bundle $\Vv \to C$ is {\it very stable} if it has no non-zero nilpotent Higgs field. It can be shown \cite{laumon} that a very stable bundle is stable (provided $g \geq 2$) and that the locus of very stable bundles is a dense open subset of the moduli space of vector bundles. The fourth author and C. Pauly  proved the following (see \cite[Theorem 1.1 and Corollary 1.2]{verystable}).

\begin{theorem}[\cite{verystable}] \label{tm very stable}
Let $\Vv$ be a stable bundle. Then, $\Vv$ is very stable if and only if the restriction $h_{C,r}|_{\Sigma_\Vv}$ of the Hitchin fibration $h_{C,r}$ to $\Sigma_\Vv$ is finite and surjective.  
\end{theorem}
One thus easily deduces the following.
\begin{corollary}\label{cor multisection from very stable}
Under the hypotheses of Theorem \ref{tm very stable}, $\Sigma_\Vv \to B_{C,r}$ is a Lagrangian multisection of $h_{C,r}$.
\end{corollary}

In view of Corollary \ref{cor multisection from very stable} we provide the following definition analogous to Definition \ref{def:Heckesubvar}. 

\begin{definition}\label{def:Heckesubvar for Vv}
For any very stable bundle $\Vv \to {C}$ of rank $r$ and degree $d - \rho + \delta$, define the subvariety $\Hec^{p,\Vv}_\ni$ of $\M_X(n,d)$ closed in $\M_X(n,d) \times_{B_{X,n}} B^p_\ni$ as
\[
\Hec^{p,\Vv}_\ni := \left( \left ( \hH^{(\delta - \rho)}_p \right )^{-1} (\Sigma_{\Vv})\right) \cap \left ( \M_X(n,rd')^p_\ni \times_{B^p_\ni} \Sing^{\, p}_\ni \right ).
\]
\end{definition}

As we did in Theorem \ref{tm:spectdataHec} for $\Hec^{p,\Jj}_\ni$, let us study the spectral datum for $\Hec^{p,\Vv}_\ni$. Let us denote by $\Sigma_{\Vv,a}$ the fiber $\Sigma_{\Vv} \to B_{C,r}$ over $a\in B_{C,r}$. Set also 
\[
\Sigma'_{\Vv,a} := S_{C,r}^{-1}(\Sigma_{\Vv,a}),
\]
note that, by definition of $S_{C,r}$ and $\Sigma_{\Vv,a}$, for every $\Jj \in \Sigma'_{\Vv,a} \subset \Jac^{\, d+\delta}(C_a)$ one has
\[
\eta_{a,*} \Jj \cong \Vv.
\]

\begin{proposition}\label{prop:spectdataHec very stable}
Let $b \in B^p_\ni$ and $a \in B^\ni_{C,r}$ such that $b = \zeta(a)$, then 
\begin{equation} \label{eq description of h_b restricted to Hek for very stable}
h_{X,n}^{-1}(b) \cap \Hec^{p,\Vv}_\ni = \bigcup_{\Jj \in \Sigma'_{\Vv,a}, \gamma^*\Jj \in \Gamma(\Jj)} S_{X,n} \left( \overline{\hat{\nu}_a^{-1}(\gamma^*\Jj)  }  \right ),
\end{equation}
where $\Gamma(\Jj)$ denotes the $\Gamma$-orbit of $\Jj$. Furthermore, 
\begin{equation} \label{eq dim Hitchin fiber cap Hecke very stable}
\dim \left ( h_{X,n}^{-1}(b) \cap \Hec^{p,\Vv}_\ni \right ) = \delta - \rho = n(n-r)(g-1)
\end{equation}
and 
\begin{equation} \label{eq Hec^Vv mid dimensional}
\dim \Hec^{p,\Vv}_\ni = n^2(g-1) + 1 = \frac{1}{2} \dim \M_X(n,d).
\end{equation} 
\end{proposition}

\begin{proof}  
After a trivial adaptation of the proof of Theorem \ref{tm:spectdataHec} one gets the description given in \eqref{eq description of h_b restricted to Hek for very stable}.
The rest of the proposition follows from this fact, \eqref{eq dim Hitchin fiber cap Hecke very stable} is clear after the description of the fibers of $\hat\nu_a$ given in \cite{EGA} and \eqref{Xb singular} of Theorem \ref{thm:normaliz}. The proof of \eqref{eq Hec^Vv mid dimensional} follows from \eqref{eq dim Hitchin fiber cap Hecke very stable} and \eqref{eq dim B^gamma}.
\end{proof}

In order to prove that the above manifold is isotropic we compare it with some complex Lagrangian submanifolds inside $\M_{{C}}(n,dm)$ defined in \cite[Section 6]{Borel}.

Let $\Vv \to C$ of rank $r$ and degree $d - \rho + \delta$, and choose an ordering $o \in \Ord(\Gamma)$ of the elements of $\Gamma$, $o = (\gamma_{o,1}, \ldots, \gamma_{o,m})$. For each $o$ and each $i = 1, \ldots, m$, define the vector bundle $\Vv_{o,i} = \gamma_{o,i}^*\Vv \otimes K_C^{-r(m-i-1)}$ be vector bundles, and consider the variety
\begin{equation}\label{eq Uni par}
\Uni^{(r,m)}_{{C}}\left (\Vv, o \right)=\left\{
(E,\varphi) \in \M_C(n,dm) \ 
\left|\ 
\begin{array}{l}
\exists \, \sigma\in H^0(X,E/\P_{(r,m)}):\\ 
\varphi\in H^0(X,E_\sigma(\mathfrak{p}_{(r,m)})\otimes K_X);\\
E_\sigma / \U_{(r,m)} \cong \Vv_{o,1} \oplus \cdots \oplus \Vv_{o,m}.
\end{array}
\right.
\right\},
\end{equation}
where we recall the notation introduced in Remark \ref{rk Levi and parabolic groups}. These subvarieties are studied in \cite[Section 6]{Borel} and it follows from \cite[Theorem 6.7]{Borel} that $\Uni^{(r,m)}_{{C}}(\Vv, o)$ is Lagrangian (in particular, it is isotropic).

We prove a relation between $\Hec^{p,\Vv}_\ni$ and $\Uni^{(r,m)}_{{C}}(\Vv, o)$ analogous to that described in Proposition \ref{pr hat p}.

\begin{proposition} \label{pr hat p for very stable}
Let $\hat{p}:\M_X(n,d)\to\M_{C}(n,md)$ be the pullback morphism. Consider the open subvariety $\Hec^{p,\Vv}_{\Jac}$ of $\Hec^{p,\Vv}_\ni$ defined as its intersection with the open subset $\Jac^{d + \delta}(X_b)$ of every Hitchin fiber $h_{X,n}^{-1}(b)$. Then $\Hec^{p,\Vv}_{\Jac}$ is mapped under the pullback map \eqref{eqpullback} to the union of $\Uni^{(r,m)}_{{C}}(\Vv, o)$ for all different $o \in \Ord(\Gamma)$, {\it i.e.}
\[
\hat{p}\left(\Hec^{p,\Vv}_{\Jac} \right )\subset \bigsqcup_{o \in \Ord(\Gamma)} \Uni^{(r,m)}_{{C}} (\Vv, o).
\]
\end{proposition}
\begin{proof}

We will check that the spectral datum of 
$p^\ast (E,\varphi)$ satisfies the conditions of the spectral datum of $\Uni_C^{(r,m)}(\Vv, o)$, which proves the statement.

Let $L\in\Jac^{d+\delta}(X_b)$ be the spectral datum of $(E,\varphi)\in\Hec^{p,\Vv}_\ni$. By Cartesianity of the square in \eqref{eq cartesian diagram spectral}, we know that $\tilde L=q_{\tilde{b}}^*L\in\Jac(X_{\tilde{b}})$ is the spectral datum for $(\tilde E,\tilde{\varphi}):={p}^*(E,\varphi)$. Also, Cartesianity of \eqref{normalizXtildeb-2} implies that
$$
\tilde{\nu}_{\tilde{b}}^*\tilde L=\tilde q_{a}^*\nu_a^*L.
$$
By \eqref{eq description of h_b restricted to Hek for very stable}, $\nu_a^*L$ equals $\gamma^*\Jj_i$, with $\Jj_i \in \Sigma'_{\Vv,a}$. So, choosing an ordering $o \in \Ord(\Gamma)$, we write
\begin{equation} \label{eq spectral data for p*Hec}
\tilde{\nu}_{\tilde{b}}^*\tilde L=\tilde q_{a}^*\gamma^*\Jj_i = ((\gamma_{o,1}\gamma)^*\Jj_i, \ldots, (\gamma_{o,m}\gamma)^*\Jj_i).
\end{equation}

Since the spectral data $\wt{L}$ satisfies \eqref{eq spectral data for p*Hec}, it is then a line over $C_{\tilde{b}} = \bigcup_{\gamma' \in \Gamma} C_{\gamma'(a)}$ whose restriction to any $C_{\gamma'(a)}$ is $(\gamma'\gamma)^*\Jj_i$. Then, using the order $o$ we can set $Z_j = \bigcup_{i=j}^m$ and $\widetilde{L}_j = \ker ( \widetilde{L} \to \widetilde{L}|_{Z_j} )$. This defines a filtration of $\Oo_{C_{\tilde{b}}}$
\[
0 \subset \wt{L}_1 \subset \wt{L}_2 \subset \cdots \subset \wt{L}_m = \wt{L}. 
\]
Taking the pushforward of this filtration to $C$ provides a filtration 
\[
0 \subset \widetilde{E}_1 \subset \widetilde{E}_1 \subset \cdots \subset \widetilde{E}_m = \wt{E}
\]
preserved by the Higgs field, and whose graded part $\gr(\wt{E})$ is precise $\bigoplus_i \Vv_i$ appearing in the definition of $\Uni^{(r,m)}_C(\Vv, o)$. Then, $(\wt{E}, \wt{\varphi})$ lies in some $\Uni^{(r,m)}_C(\Vv, o)$ and the proof is completed.

Alternatively, checking that the line bundles in \eqref{eq spectral data for p*Hec} satisfy \cite[Assumption 1]{Borel}, we may apply  \cite[Proposition 6.6]{Borel} to conclude.
\end{proof}
We can finally prove the main theorem of this section, whose proof mimics that of Theorem~\ref{tm lagrangian} and is thus omitted. 
\begin{theorem}
The manifold $\Hec^{\xi,\Vv}_\ni$ is Lagrangian.
\end{theorem}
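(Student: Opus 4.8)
The plan is to follow the strategy of Theorem~\ref{tm lagrangian}: since $\Hec^{\gamma,W}$ has already been shown to be isotropic in the preceding proposition, it remains only to prove that it is mid-dimensional, i.e.\ that $\dim \Hec^{\gamma,W} = \tfrac12 \dim \M_X(n,d) = n^2(g-1)+1$. To do this I would compute separately the dimension of the base and of the generic fibre of the Hitchin fibration $\Hec^{\gamma,W} \to B^\gamma_\red$ and check that they add up correctly; additivity is legitimate because, as in Theorem~\ref{tm lagrangian}, $\Hec^{\gamma,W}$ lies in the stable and hence smooth locus of $\M_X(n,d)$.

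First I would identify the fibre over $b \in B^\gamma_\red$. By the fibrewise description \eqref{eq BAA par}, this fibre consists of the line bundles $L \to X_b$ with $n_b^*L \cong \Ll_b$, i.e.\ it is the fibre $\hat\nu_b^{-1}(\Ll_b)$ of the pullback map $\hat\nu_b \colon \Jac(X_b) \to \Jac(X_{\ol b})$ induced by the normalisation $\nu_b \colon X_{\ol b} \to X_b$ of Proposition~\ref{prop:normaliz 2} (a disjoint union of such fibres over the Galois orbit of $\Ll_b$ in the non-descending case, but each component is a torsor of the same dimension). Mimicking the cohomological argument in Theorem~\ref{tm Hecke}, the normalisation sequence $0 \to \Oo^*_{X_b} \to \nu_{b,*}\Oo^*_{X_{\ol b}} \to \Oo^*_{\sing(X_b)} \to 0$ on the integral curve $X_b$ yields, after passing to cohomology and using $H^0(\Oo^*_{X_b}) \cong H^0(\Oo^*_{X_{\ol b}}) \cong \CC^*$, the short exact sequence $0 \to H^0(X_b, \Oo^*_{\sing(X_b)}) \to \Jac(X_b) \xrightarrow{\hat\nu_b} \Jac(X_{\ol b}) \to 0$. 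Hence $\hat\nu_b^{-1}(\Ll_b)$ is a torsor over $\ker \hat\nu_b \cong H^0(X_b, \Oo^*_{\sing(X_b)})$, whose dimension equals $\deg \sing(X_b) = n(n-n')(g-1)$ by Proposition~\ref{prop:normaliz 2}.

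Finally, Proposition~\ref{prop:normaliz 2} gives $\dim B^\gamma_{\red} = \dim B^\gamma_{X,n} = (n')^2 m (g-1)+1$, so the total dimension is $n(n-n')(g-1) + (n')^2 m (g-1) + 1$. Substituting $n = n'm$ collapses the bracket, since $n(n-n') + (n')^2 m = (n')^2 m (m-1) + (n')^2 m = (n')^2 m^2 = n^2$, giving $\dim \Hec^{\gamma,W} = n^2(g-1)+1$, exactly half of $\dim \M_X(n,d)$; combined with isotropicity this establishes Lagrangianity. The main subtlety I anticipate is the first step: unlike the maximal-order case there is no stand-alone analogue of Theorem~\ref{tm Hecke} producing the fibre structure, so I must justify directly that the fibre of $\Hec^{\gamma,W} \to B^\gamma_\red$ is genuinely a (union of) torsor(s) over $\ker\hat\nu_b$ of the expected dimension — in particular that the construction $\check p_\gamma \circ s(\hat\nu^{-1}\Ll)$ does not drop dimension upon passing to the $\ZZ_m$-quotient, which follows from $\check p_\gamma$ being finite onto its image.
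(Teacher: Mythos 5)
Your argument is correct and is precisely the route the paper intends: the paper omits this proof with the remark that it mimics Theorem \ref{tm lagrangian}, and you carry out exactly that computation, identifying the fibre of $\Hec^{\gamma,W}\to B^\gamma_\red$ as a torsor over $H^0(X_b,\Oo^*_{\sing(X_b)})$ of dimension $n(n-n')(g-1)$ via the normalisation sequence (as in Theorem \ref{tm Hecke}) and adding the base dimension $(n')^2m(g-1)+1$ from Proposition \ref{prop:normaliz 2} to get $n^2(g-1)+1=\tfrac12\dim\M_X(n,d)$. Your closing remark on why the $\ZZ_m$-quotient and the finite map $\check p_\gamma$ do not drop the dimension is a worthwhile point that the paper leaves implicit.
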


\begin{proof}
Isotropicity is proved as in Theorem~\ref{tm lagrangian} making use in the proof of Proposition \ref{pr hat p for very stable} instead of Proposition \ref{pr hat p}, and recalling that the subvarieties $\Uni_C^{(r,m)}(\Vv,o)$ are isotropic. Then, the proof follows from \eqref{eq Hec^Vv mid dimensional}.
\end{proof}

The question of how to produce hyperholomorphic bundles $\M_C(r,d)$ dual to the $\BAA$-brane supported on a Lagrangian multisection $\Sigma_\Vv$ associated to a very stable bundle is being studied by Hausel and Hitchin \cite{HH}.

\end{document}